\setlist{itemsep=-2.5pt, leftmargin=\labelwidth+\labelsep}
\tikzstyle{startstop} = [rectangle, rounded corners, minimum width=2cm, minimum height=1cm,text centered, draw=black, fill=white!30, text width=2cm]
\tikzstyle{startstopnew} = [rectangle, rounded corners, minimum width=2cm, minimum height=1cm,text centered, draw=black, fill=white!30, text width=5cm]
\tikzstyle{process} = [rectangle, minimum height=1cm, text centered, draw=black, fill=white!30, text width=4cm]
\tikzstyle{process_new} = [rectangle, minimum height=1cm, text centered, draw=black, fill=white!30, text width=2.1cm]
\tikzstyle{decision} = [diamond, minimum width=2cm, minimum height=0.5cm, text centered, draw=black, fill=white!30]
\tikzstyle{arrow} = [thick,->,>=stealth]
\newtheorem{lmm}{Lemma}
\newtheorem{theorem}{Theorem}
\newtheorem{corollary}{Corollary}
\newtheorem*{remark}{Remark}
\renewcommand{\inf}{\mathop{\mathrm{inf}\vphantom{\mathrm{sup}}}}
\newcommand\mytilde[1]{\stackrel{\sim}{\smash{#1}\rule{0pt}{1.05ex}}}
\renewcommand{\vec}[1]{\underline{#1}}
\newcolumntype{R}[1]{>{\RaggedRight\arraybackslash}m{#1}}
\newcolumntype{P}{>{\RaggedRight\arraybackslash}p{(\linewidth-2.2cm-14\tabcolsep)/6}}
\title{Space--time reduced basis methods for parametrized unsteady Stokes equations}
\author{ \href{https://orcid.org/0000-0002-2784-9261}{\includegraphics[scale=0.06]{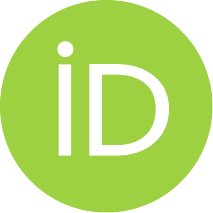} \hspace{1mm} Riccardo Tenderini}\\
	Institute of Mathematics\\
	Ècole Polytechnique Fédérale de Lausanne (EPFL)\\
	CH--1015 Lausanne, Switzerland \\
	\And
	\href{https://orcid.org/0000-0002-7798-0858}{\includegraphics[scale=0.06]{img/orcid.pdf} \hspace{1mm} Nicholas Mueller} \\
	Department of Mathematics\\
	Monash University\\
	VIC--3800, Clayton, Victoria, Australia \\
	\And
	\href{https://orcid.org/0000-0002-2832-6630}{\includegraphics[scale=0.06]{img/orcid.pdf} \hspace{1mm} Simone Deparis} \\
	Institute of Mathematics\\
	Ècole Polytechnique Fédérale de Lausanne (EPFL)\\
	CH--1015 Lausanne, Switzerland \\
}
\date{}
\renewcommand{\headeright}{ST--RB methods for parametrized unsteady Stokes equations}
\numberwithin{equation}{section}
\begin{document}

\maketitle

% REQUIRED
\begin{abstract}
	\justifying
	In this work, we analyse space--time reduced basis methods for the efficient numerical simulation of h{\ae}modynamics in arteries. The classical formulation of the reduced basis (RB) method features dimensionality reduction in space, while finite differences schemes are employed for the time integration of the resulting ordinary differential equation (ODE). Space--time reduced basis (ST--RB) methods extend the dimensionality reduction paradigm to the temporal dimension, projecting the full--order problem onto a low--dimensional spatio--temporal subspace. Our goal is to investigate the application of ST--RB methods to the unsteady incompressible Stokes equations, with a particular focus on stability. High--fidelity simulations are performed using the Finite Element (FE) method and BDF2 as time marching scheme. We consider two different ST--RB methods. In the first one --- called ST--GRB --- space--time model order reduction is achieved by means of a Galerkin projection; a spatio--temporal velocity basis enrichment procedure is introduced to guarantee stability. The second method --- called ST--PGRB --- is characterized by a Petrov--Galerkin projection, stemming from a suitable minimization of the FOM residual, that allows to automatically attain stability. The classical RB method --- denoted as SRB--TFO ---  serves as a baseline for the theoretical development. Numerical tests have been conducted on an idealized symmetric bifurcation geometry and on the patient--specific one of a femoropopliteal bypass. The results show that both ST--RB methods provide accurate approximations of the high--fidelity solutions, while considerably reducing the computational cost. In particular, the ST--PGRB method exhibits the best performance, as it features a better computational efficiency while retaining accuracies in accordance with theoretical expectations.
\end{abstract}  % 249 words

\keywords{H{\ae}modynamics \and Twofold saddle point problems \and Reduced basis method \and Space--time model order reduction \and Supremizers enrichment \and Least--squares Petrov--Galerkin projection}

\justifying

\section{Introduction}
\label{sec:introduction}
Patient--specific high--fidelity numerical simulations of h{\ae}modynamics are traditionally performed solving parametrized unsteady incompressible Navier--Stokes equations. The Finite Elements (FE) or the Finite Volumes (FV) methods are typically used for spatial discretization, while implicit linear multistep (such as Backward Diffentiation Formulas -- BDF) or multistage (such as Runge--Kutta -- RK) methods are employed for the time integration of the resulting ordinary differential equation (ODE). Overall, this defines the so--called full--order model (FOM). Depending on the features of the problem at hand --- like the shape and dimension of the physical domain, the complexity of the physical processes of interest or the length of the simulation interval --- solving the FOM problem can be extremely expensive from a computational standpoint, even exploiting parallel computations \cite{lassila2013model}. In case of parametric dependence, Reduced Order Models (ROMs) are widely employed to lighten the computational burden of the simulations, yet retaining a good degree of accuracy. Projection--based ROMs (PROMs) --- such as the traditional Reduced Basis (RB) method --- reduce the spatial dimensionality of the dynamical system by means of a projection process, leading to a low--dimensional ODE. However, the resolution of the latter involves time integration, which is typically performed adopting the same scheme and with the same timestep size used in the FOM. As a result, traditional space--reduced ROMs feature a much smaller dimensionality in space than the FOM, but the same one in time. Therefore, in problems where either the time simulation interval is very large (as for instance in many applications in computational fluid dynamics), or the timestep size should be very small in order to capture some relevant behaviours happening at small time scales (as for instance in molecular dynamics), significant computational gains are difficult to realize.

The issue linked to temporal complexity has already been addressed in various ways. A comprehensive literature review on the topic can be found in \cite{choi2019space}, where the authors investigate the pros and cons of several approaches. Among those, we may cite time--parallel methods (such as parareal \cite{maday2002parareal}, PITA \cite{falgout2014parallel} and MGRIT \cite{farhat2003time}) and ``forecasting'' approaches \cite{carlberg2015decreasing, carlberg2017galerkin}, even though they only allow to reduce the wall--time of simulations and not the temporal complexity of the problem. A reduction of both the spatial and the temporal dimension of the FOM characterizes the space--time RB methods presented in \cite{urban2012new, urban2014improved, yano2014space, yano2014space2}, that also feature error bounds that grow linearly, rather than exponentially, with respect to the number of timesteps. However, such methods exhibit some relevant drawbacks, the major ones being the need for a (uncommon) FE discretization of the time domain and the absence of hyper--reduction techniques to efficiently handle non--linearities.

In \cite{choi2019space} the authors propose a novel approach --- called Space--Time Least--Squares Petrov--Galerkin (ST--LSPG) method --- to tackle parametrized non--linear dynamical systems. The idea is to minimize the FOM residual, computed from the FOM reconstruction of the space--time reduced solution, in a weighted spatio--temporal $\ell^2$--norm. Different choices for the reduced basis construction and for the weighting matrix assembling are proposed and analysed. \emph{A priori} error bounds, bounding the solution error by the best space--time approximation error, are also retrieved and the stability constant features a subquadratic growth with respect to the total number of time instances. Even though the performances of the ST--LSPG method are found to be good on 1D non--linear dynamical systems, a deterioration is expected when dealing with 2D or 3D geometries. This claim is particularly strong if the space--time collocation approach is employed for hyper--reduction, since sampling techniques notoriously suffer the curse of dimensionality. In \cite{shimizu2021windowed}, the drawbacks of the ST--LSPG method are addressed adopting a time--windowed strategy (Windowed ST--LSPG -- WST--LSPG). The simulation interval is partitioned into windows; within each window, a low--dimensional spatio--temporal subspace is defined and the residual is minimized in a weighted $\ell^2$--norm. Numerical experiments, carried out also considering 2D compressible Navier--Stokes equations for the flow around an airfoil, demonstrate that the WST--LSPG method is better than the ST--LSPG one both in terms of accuracy and of efficiency. However, the coupling between the different time windows is not taken into account and this could easily deteriorate the performances if their number is large. Finally, in \cite{choi2021space, kim2021efficient}, the time--complexity bottleneck of the RB method is addressed by performing, respectively, a Galerkin projection and a least--squares Petrov--Galerkin projection of the FOM onto a low--dimensional spatio--temporal subspace. The resulting methods exhibit good performances on 2D linear dynamical systems and \emph{a priori} error bounds, featuring a subquadratic dependency on the total number of time instants, are derived. However, only linear problems have been considered.  

The goal of this work is to investigate the application of Space--Time Reduced Basis (ST--RB) methods to the unsteady parametrized incompressible Stokes equations, a well--known linearization of the Navier--Stokes equations that models Newtonian flow at small Reynolds' numbers. The problem parametrization affects the inflow/outflow rates and a linear reaction term added to the momentum conservation equation in order to model the presence of blood clots; the geometry is assumed fixed. Additionally, non--homogeneous Dirichlet boundary conditions (BCs) are weakly imposed by means of Lagrange multipliers \cite{pegolotti2021model}. The application of two different ST--RB methods --- which rely on a Galerkin and on a Petrov--Galerkin projection respectively --- to the problem at hand is detailed and the well--posedness of the resulting problem is investigated. This last step is particularly challenging since, dealing with a twofold saddle point problem \cite{gatica2008characterizing, howell2011inf}, it resorts to \emph{inf--sup} stability analysis.

The manuscript is structured as follows. In Section \ref{sec:section2}, we introduce the unsteady parametrized incompressible Stokes equations and we discuss their full--order discretization. In Section \ref{sec:section3}, we investigate the application of the aforementioned ST--RB approaches to the problem at hand. In particular, we describe the construction of the space--time reduced bases and, for both methods, we detail the assembling of the linear systems to be solved. Additionally, we discuss the stability of the two approaches. Section \ref{sec:section4} presents the numerical results, obtained on two different test cases, characterized by different geometries and parametrizations of the boundary data. Finally, Section \ref{sec:section5} provides a summary, lists some limitations of the work and proposes possible future developments.

\section{Unsteady Parametrized Incompressible Stokes Equations}
\label{sec:section2}
Even if h{\ae}morheology indicates that blood is a non--Newtonian fluid, the latter can be approximated as a Newtonian one if the vessels where it flows are sufficiently large. Under this assumption, the blood flow is governed by unsteady incompressible Navier--Stokes equations. In this work, we make an additional simplification: we neglect the non--linear convective term characterizing the Navier--Stokes equations, hence modelling blood flow by means of the unsteady incompressible Stokes equations. This system of linear PDEs well describes incompressible Newtonian flow at small Reynolds' numbers, i.e. in a regime where inertial forces are negligible with respect to viscous ones (see e.g. \cite{manzoni2012reduced, quarteroni2000computational}). In this regard, we remark that blood flow typically features Reynolds' numbers of $\approx 10^2-10^3$; the Stokes assumption is thus not valid in practice and an extension of the present work to deal with Navier--Stokes equations is being planned. Moreover, we consider the presence of blood clots of variable density, which are modelled by suitable reaction terms in the momentum conservation equation.

\subsection{Strong and weak formulation}\label{subs:strong_weak_stokes}
We consider an open, simply connected and bounded domain $\Omega \subset \mathbb{R}^d$ and we denote its boundary by $\partial\Omega$. We define the parameter space $\mathcal{D} \subset \mathbb{R}^p$ and we generically denote by $\bm{\mu}$ one of its elements. The unsteady parametrized incompressible Stokes equations in $\Omega$ read as:
\begin{equation} 
\label{eq: strong_form_stokes}
\begin{cases}
\rho\vec{u}_t^{\bm{\mu}} - \nabla\cdot (2\mu\nabla^s\vec{u}^{\bm{\mu}}) + \rho_c^{\bm{\mu}}(\vec{x}) \vec{u}^{\bm{\mu}} + \nabla p^{\bm{\mu}} = \vec{f}^{\bm{\mu}}(\vec{x})  & \text{in} \ \Omega \times [0,T] \\
\nabla \cdot \vec{u}^{\bm{\mu}} = 0  & \text{in} \ \Omega \times [0,T] \\
\vec{u}^{\bm{\mu}} = \vec{g}^{\bm{\mu}}(\vec{x})  & \text{on} \ \Gamma_\text{D} \times [0,T] \\
\sigma(\vec{u}^{\bm{\mu}}, p^{\bm{\mu}}) \vec{n} = \vec{h}^{\bm{\mu}}(\vec{x}) & \text{on} \ \Gamma_\text{N} \times [0,T] \\
\vec{u}^{\bm{\mu}} = \vec{u}_0^{\bm{\mu}} & \text{in} \ \Omega \times \{0\}
\end{cases} 
\end{equation}
Here $\vec{u}^{\bm{\mu}}: \Omega \times [0,T] \to \mathbb{R}^d$ and $p^{\bm{\mu}}: \Omega \times [0,T] \to \mathbb{R}$ are the velocity and the pressure of the fluid ($\vec{u}^{\bm{\mu}}_t$ denotes the partial derivative of the velocity in time); $\rho$ and $\mu$ are the fluid's density and viscosity respectively; $\nabla^s\vec{u} = (\nabla\vec{u}+\nabla^T\vec{u})/2$ is the strain rate tensor; $\sigma(\vec{u}, p) = 2\mu\nabla^s\vec{u} - pI$ is the Cauchy stress tensor;  
$\rho_c^{\bm{\mu}}: \Omega \to \mathbb{R}$ is the ``global'' blood clot density, defined as $\rho_c^{\bm{\mu}}(x) := \sum_{q=1}^{N_c} \rho_c^q(\bm{\mu}) \mathbbm{1}_{\Omega_c^q}(x)$ with $\rho_c^q : \mathcal{D} \to \mathbb{R}^+$, where $\mathbbm{1}_A: \mathbb{R}^d \to \mathbb{R}$ denotes the indicator function of a set $A \subset \mathbb{R}^d$, while $\Omega_c^q \subset \Omega$, $\rho_c^q$ are the location and the density of the $q$--th clot, respectively; $\vec{f}^{\bm{\mu}}: \Omega \times [0,T] \to \mathbb{R}^d$ is a forcing term; $\vec{g}^{\bm{\mu}}: \Gamma_D \times [0,T] \to \mathbb{R}^d$ and $\vec{h}^{\bm{\mu}}: \Gamma_N \times [0,T] \to \mathbb{R}^d$ are the Dirichlet and Neumann boundary data, respectively; $\vec{u}_0^{\bm{\mu}}: \Omega \to \mathbb{R}^d$ is the initial condition; $\vec{n}$ is the outward unit normal vector to $\partial\Omega$. $\{\Gamma_D, \Gamma_N\}$ is a partition of $\partial\Omega$ which defines the Dirichlet and the Neumann boundaries, respectively. Since we deal with cardiovascular simulations, it is useful to define the inlet boundary $\Gamma_{\text{IN}}$, the outlet boundary $\Gamma_{\text{OUT}}$ and the vessel wall boundary $\Gamma_{W}$. In this work, we always impose homogeneous Dirichlet BCs on $\Gamma_W$ (i.e. no--slip BCs, so that the artery is approximated as a rigid object) and non--homogeneous Dirichlet BCs on $\Gamma_{IN}$. The nature of the BCs imposed on $\Gamma_{OUT}$ depends instead on the test case at hand. We also define the non--homogeneous Dirichlet boundary $\mytilde{\Gamma}_D := \bigcup_{k=1}^{N_D} \mytilde{\Gamma}_D^k$, where $\mytilde{\Gamma}_D^k$ denotes the $k$--th inlet/outlet boundary where non--homogeneous Dirichlet BCs are imposed; $N_D$ is the number of such boundaries. It is worth highlighting that here we consider the parametric dependency to exclusively characterize the Dirichlet datum $\vec{g}^{\bm{\mu}}$ and the blood clot density $\rho_c^{\bm{\mu}}$. This restriction allows for an efficient offline/online splitting, hence making the problem more amenable for (space--time) model order reduction; other choices are of course possible. For the sake of conciseness, from now on we drop all the $(\cdot)^{\bm{\mu}}$ superscripts, except when referring to the parameter--dependent data $\vec{g}^{\bm{\mu}}$ and $\rho_c^{\bm{\mu}}$.

Let us introduce the following spaces:
\begin{equation}
\label{eq: hilbert spaces}	
\mathcal{V}^g :=  (H^1|_{\Gamma_D}(\Omega))^d = \left\{ \vec{v} \in (H^1(\Omega))^d \ \ \text{s.t.} \   \vec{v}=\vec{g}^{\bm{\mu}} \  \text{on} \ \Gamma_\text{D}\right\};
\qquad
\mathcal{Q} := L^2(\Omega);
\end{equation}
equipped with the usual inner products $(\cdot, \cdot)_{\mathcal{V}^g} = (\cdot, \cdot)_{(H^1)^d}$ and $(\cdot, \cdot)_{\mathcal{Q}} = (\cdot, \cdot)_{L^2}$. Let us also define $\mathcal{V}_{0} := (H_0^1(\Omega))^d$. The weak formulation of Eq.\eqref{eq: strong_form_stokes} has the structure of a non--symmetric and non--coercive saddle point problem. Instead of relying on the definition of a lifting function, we choose to impose non--homogeneous Dirichlet BCs weakly, using Lagrange multipliers. Such a choice is driven by the possibility of using a similar formulation to couple several domains  \cite{pegolotti2021model}. This approach translates into the following weak formulation: 
\begin{myproblem}{}{weak_form_stokes_2}
	Given $\vec{f}$, $\vec{g}^{\bm{\mu}}$, $\vec{h}$ regular enough, find $\left(\vec{u}, p, \vec{\lambda} \right) \in \mathcal{V}^g \times \mathcal{Q} \times \mathcal{L}$, such that $\forall t \in [0,T]$:
	\begin{equation} 
	\label{eq: weak_form_stokes_2}
	\begin{cases} 
	\rho \int_{\Omega}\vec{u}_t\cdot\vec{v}+\mu\int_{\Omega}\nabla^s\vec{u}:\nabla^s\vec{v} + \int_{\Omega} \rho_c^{\bm{\mu}} \vec{u} \cdot \vec{v} -
	\int_{\Omega}p\nabla \cdot \vec{v}+\int_{\mytilde{\Gamma}_D}\vec{\lambda}\cdot\vec{v} = \int_{\Omega}\vec{f}\cdot\vec{v} + \int_{\Gamma_N}\vec{h}\cdot\vec{v} & \ \\
	\int_{\Omega}q\nabla \cdot \vec{u} = 0 & \ \\
	\int_{\mytilde{\Gamma}_D}\vec{u}\cdot \vec{\xi} = \int_{\mytilde{\Gamma}_D} \vec{g}^{\bm{\mu}} \cdot \vec{\xi} & \
	\end{cases} 
	\end{equation}
	for all $\left(\vec{v}, \vec{q}, \vec{\xi}\right) \in \mathcal{V}^0 \times \mathcal{Q} \times \mathcal{L}$ and $\vec{u}=\vec{u}_0$ for $t=0$.
\end{myproblem}
\noindent
Based on the theory on primal hybrid methods, a natural choice for the space of Lagrange multipliers is $\mathcal{L} := \prod_{k=1}^{N_D} \mathcal{L}^k$, with $\mathcal{L}^k = \left(H^{-1/2}_{00}(\mytilde{\Gamma}_D^k)\right)^d$. We refer the reader to \cite{pegolotti2021model} for details. We remark that Problem \ref{pb:weak_form_stokes_2} features a twofold saddle point structure, as the dual space $\mathcal{Q} \times \mathcal{L}$ is a product space \cite{gatica2008characterizing,howell2011inf}.
\begin{comment}
Based on the theory on primal hybrid methods, a natural choice for the space of Lagrange multipliers is $\mathcal{L} := \prod_{k=1}^{N_D} \mathcal{L}^k$, with $\mathcal{L}^k = \left(H^{-1/2}_{00}(\mytilde{\Gamma}_D^k)\right)^d$ being the dual space of
\begin{equation}
\label{eq: multipliers space}
\left(H^{1/2}_{00}(\mytilde{\Gamma}_D^k)\right)^d := \left\{\vec{\eta} \in \left(H^{1/2}(\mytilde{\Gamma}_D^k)\right)^d: \ E^0 \vec{\eta} \in \left(H^{1/2}(\partial\Omega)\right)^d \right\}
\end{equation}
Here $E^0\vec{\eta}$ is the trivial extension by $0$ of $\vec{\eta}$ to $\partial\Omega$. We remark that Problem \ref{pb:weak_form_stokes_2} features a twofold saddle point structure, as the dual space $\mathcal{Q} \times \mathcal{L}$ is a product space \cite{gatica2008characterizing,howell2011inf}.
\end{comment}
\subsection{High--fidelity Numerical Discretization} \label{subs: discretization_stokes}
Since Problem \ref{pb:weak_form_stokes_2} is time--dependent, its discretization involves both the spatial and the temporal dimension. Concerning spatial discretization, we rely on the FE method. Therefore, we introduce the following finite dimensional  approximations of velocity, pressure and Lagrange multipliers (for $k \in \{1, \dots, N_D\}$): 
\begin{equation*}
\label{eq: finite_element_approximation}
\vec{u}_h(\bm{x},t) = \sum\limits_{i=1}^{N_u^s} u_i(t) \vec{\varphi}_i^u(\bm{x}) \in \mathcal{V}^g_h;
\ \ 
p_h(\bm{x},t) = \sum\limits_{i=1}^{N_p^s}p_i(t) \varphi_i^p(\bm{x}) \in \mathcal{Q}_h;
\ \ 
\vec{\lambda}_h^k(\bm{x},t) = \sum\limits_{i=1}^{N_\lambda^k} \lambda^k_i(t) \vec{\eta}^k_i(\bm{x}) \in \mathcal{L}_h^k.
\end{equation*}
The definitions of $\mathcal{V}^g_h$ and $\mathcal{Q}_h$ are of key importance for the accuracy of the approximation. Moreover, in the case of saddle point problems, the quality of the discretization is critical to ensure well--posedness; we refer to Section \ref{subs: well posedness of the FOM} for details. The discretization of the spaces of Lagrange multipliers is based on the definition of orthonormal basis functions on the unit disk, which are constructed from Chebyshev polynomials of the second kind. Since $\mathcal{L} := \prod_{k=1}^{N_D} \mathcal{L}^k$, we have that $\mathcal{L}_h = \prod_{k=1}^{N_D} \mathcal{L}^k_h$, where $\mathcal{L}^k_h$ is a finite--dimensional approximation of $\mathcal{L}^k$. This implies that the dimensionality of $\mathcal{L}_h$ is $N_\lambda = \sum_{k=1}^{N_D} N_\lambda^k$, being $N_\lambda^k :=  dim(\mathcal{L}_h^k)$. We denote the basis functions of $\mathcal{L}_h^k$ as $\{\vec{\eta}_i^k\}_{i=1}^{N_\lambda^k}$; they are orthonormal in $L^2(\mytilde{\Gamma}^k_D)$--norm. We refer the reader to \cite{pegolotti2021model} for additional details. We can then introduce the following matrices and vectors
\begin{equation}
\label{eq: FOM matrices}
\begin{alignedat}{2}
\bm{A} \in \mathbb{R}^{N_u^s \times N_u^s} :\quad \bm{A}_{ij} &= 2\mu\int_{\Omega}\nabla^s(\vec{\varphi}_j^u):\nabla^s(\vec{\varphi}_i^u) 
\qquad &
\bm{M} \in \mathbb{R}^{N_u^s \times N_u^s} :\quad  \bm{M}_{ij} &= \rho\int_{\Omega}\vec{\varphi}_j^u \cdot \vec{\varphi}_i^u \\
\bm{B} \in \mathbb{R}^{N_p^s \times N_u^s} :\quad \bm{B}_{ij} &= -\int_{\Omega}\varphi_i^p \ \nabla\cdot\vec{\varphi}_j^u
\qquad &
\bm{C}^k \in \mathbb{R}^{N_\lambda^k \times N_u^s} :\quad \bm{C}^k_{ij} &= \int_{\mytilde{\Gamma}_D^k}\vec{\varphi}_j^u \cdot \vec{\eta}_i^k \\
\bm{f} \in \mathbb{R}^{N_u^s} :\quad \bm{f}_i &= \int_{\Omega}\vec{f}\cdot \vec{\varphi}_i^u + \int_{\Gamma_N}\vec{h}\cdot\vec{\varphi}_i^u 
\qquad &
\tilde{\bm{g}}^{k,\bm{\mu}} \in \mathbb{R}^{N^k_\lambda} :\quad \tilde{\bm{g}}_i^{k,\bm{\mu}} &= \int_{\mytilde{\Gamma}_D^k}\vec{g}^{k,\bm{\mu}} \cdot \vec{\eta}_i^k
\end{alignedat}
\end{equation}
where $\vec{g}^{k,\bm{\mu}}$ represents the Dirichlet datum on $\mytilde{\Gamma}_D^k$. To ease the notation, we also define
\begin{equation}
\label{eq: FOM matrices 2}
\bm{C} = \left[ \left(\bm{C}^1\right)^T \vert \cdots \vert \left(\bm{C}^{N_D}\right)^T \right]^T \in \mathbb{R}^{N_\lambda \times N_u^s}, \qquad
\tilde{\bm{g}}^{\bm{\mu}} = \left[ \left(\tilde{\bm{g}}^{1, \bm{\mu}}\right)^T \vert \cdots \vert \left(\tilde{\bm{g}}^{N_D, \bm{\mu}}\right)^T \right]^T \in \mathbb{R}^{N_\lambda}.
\end{equation} 
Moreover, we define the matrices
\begin{equation}
\bm{R}^q \in \mathbb{R}^{N_u^s \times N_u^s} : \quad \bm{R}^q_{ij} = \int_{\Omega_c^q} \vec{\varphi}_j^u \cdot \vec{\varphi}_i^u \qquad \text{with} \quad q \in \left\{1, \dots, N_c\right\}~,
\end{equation}
that model the presence of $N_c$ blood clots of unitary density in the subdomains $\left\{\Omega_c^q\right\}_{q=1}^{N_c}$. We also define the ``global'' reaction matrix $\bm{R}(\bm{\mu}) := \sum_{q=1}^{N_c} \rho_c^q(\bm{\mu}) \bm{R}^q$; this expression shows the affine parametric dependency of the left-hand side term of the full--order problem.

\begin{remark}
Notice that, in order to strongly enforce homogeneous Dirichlet BCs on the vessel wall $\Gamma_W$, we need to suitably modify the rows of the matrices $\bm{M}$, $\bm{A}$, $\bm{B}^T$, $\bm{C}^T$, $\bm{R}(\bm{\mu})$ and $\bm{f}$. As a result, $\bm{B}^T$ and $\bm{C}^T$ do not exactly correspond to the transposed of $\bm{B}$ and $\bm{C}$, respectively.
\end{remark}

Regarding temporal discretization, let us introduce a sequence of timesteps $\{t_n\}_{n=0}^{N^t}$ such that $t_0 =0$, $t_{N^t}=T$ and $t_{n+1} = t_n + \delta$; $\delta$ is called timestep size. Let $\bm{w}_n := \left[\bm{u}_n, \bm{p}_n, \bm{\lambda}_n\right]^T \in \mathbb{R}^{N^s}$ be the solution at time $t_n$, with $N^s := N_u^s + N_p^s + N_\lambda$ and for $n \in \{0,\cdots,N^t\}$. We apply the second--order implicit BDF2 scheme. So, given $\bm{w}_{n-j+1}$ for $j=1,2$, $\bm{w}_{n+1}$ is such that:
\begin{equation}
\label{eq: residual BDF2}
\bm{r}(\bm{w}_{n+1}) := \bm{H} \bm{w}_{n+1} - \sum\limits_{j=1}^2 \alpha_j \bm{H} \bm{w}_{n-j+1} - \delta \beta \bm{\mathring{f}}(t_{n+1}, \bm{w}_{k+1}) = \bm{0}~,
\end{equation}
where
\begin{equation}
\label{eq: BDF2 matrices}
\bm{H} = \begin{bmatrix}
\bm{M} & & \\
& & & \\ 
& & &
\end{bmatrix} ;
\qquad 
\bm{\mathring{f}}(t_n, \bm{w}_n) = \begin{bmatrix}
\bm{f}(t_n) \\ \\ \tilde{\bm{g}}^{\bm{\mu}}(t_n)
\end{bmatrix}
- 
\begin{bmatrix}
\bm{A} + \bm{R}(\bm{\mu}) & \bm{B}^T & \bm{C}^T \\
\bm{B} & & \\
\bm{C} & &
\end{bmatrix}
\begin{bmatrix}
\bm{u}_n \\ \bm{p}_n \\ \bm{\lambda}_n
\end{bmatrix}.
\end{equation}
In particular, for the BDF2 method, we have $\alpha_1 = 4/3$, $\alpha_2 = -1/3$ and $\beta = 2/3$.

\newpage

Eq.\eqref{eq: residual BDF2} can be rewritten in the form of a single linear system of dimension $N^{st} \times N^{st}$ --- being $N^{st} := N^s N^t$ the number of DOFs of the space--time FOM problem --- as
\begin{equation}
\label{eq: monolitic_FOM_system}
\begin{bmatrix} 
\bm{A}^{st}_1 + \bm{R}^{st}(\bm{\mu}) & \bm{A}^{st}_2 & \bm{A}^{st}_3 \\ 
\bm{A}^{st}_4 &  &  \\ 
\bm{A}^{st}_7 &  &  
\end{bmatrix}
\begin{bmatrix}
\bm{u}^{st} \\ \bm{p}^{st} \\ \bm{\lambda}^{st}
\end{bmatrix}
= 
\begin{bmatrix} 
\bm{F}^{st}_1 \\ \ \\ \bm{F}^{st}_3(\bm{\mu}) \end{bmatrix}
\end{equation}
or, even more compactly, as $\bm{A}^{st} \bm{w}^{st} = \bm{F}^{st}$. The blocks of matrix $\bm{A}^{st}$, that features a twofold saddle point structure of type 1 \cite{howell2011inf}, can be written as follows:
%\begin{equation}
%\begin{alignedat}{2}
\begin{align}
\label{eq: monolitic_FOM_blocks_matrix}
%\bm{A}^{st}_1 &= 
%\begin{bmatrix} \bm{M} + \frac{2}{3}\delta\bm{A}  &  &  &  &  &  &  \\
%-\frac{4}{3}\bm{M} & \bm{M} + \frac{2}{3}\delta\bm{A} &  &  &  &  &  \\
%\frac{1}{3}\bm{M} & -\frac{4}{3}\bm{M} &  \bm{M} + \frac{2}{3}\delta\bm{A}  &  &  &  &  \\
% & \ddots & \ddots & \ddots &  &  &  \\
% &  &  &  & \frac{1}{3}\bm{M} & -\frac{4}{3}\bm{M} & \bm{M} + \frac{2}{3}\delta\bm{A} 
%\end{bmatrix} \\
%\bm{A}^{st}_2 &= 
%\dfrac{2}{3}\delta\begin{bmatrix} \bm{B}^T &  & &  \\
% & \bm{B}^T & & \\ 
% &  & \ddots &  \\
% &  &  & \bm{B}^T
%\end{bmatrix}
%\quad 
%\bm{A}^{st}_3 =
%\dfrac{2}{3}\delta\begin{bmatrix} \bm{C}^T &  & &  \\
% & \bm{C}^T & &  \\ 
% &  & \ddots &  \\
% &  &  & \bm{C}^T
%\end{bmatrix} \\
%\bm{A}^{st}_4 &= 
%\begin{bmatrix} 
%\bm{B} &  &  &  \\
% & \bm{B} &  &  \\ 
% &  & \ddots &  \\
% &  &  & \bm{B}
%\end{bmatrix}
%\qquad \quad \ 
%\bm{A}^{st}_7 = 
%\begin{bmatrix} 
%\bm{C} &  &  &  \\
% & \bm{C} &  &  \\ 
% &  & \ddots &  \\
% & & & \bm{C}
%\end{bmatrix}
\bm{A}_1^{st} &= diag\biggl(\underbrace{\bm{M}, \cdots, \bm{M}}_{N^t}\biggr) + 
\dfrac23 \delta \ diag\biggl(\underbrace{\bm{A}, \cdots, \bm{A}}_{N^t}\biggr) \notag \\[-12pt]
& \quad \ - \dfrac43 \ subdiag^{(1)}\biggl(\underbrace{\bm{M}, \cdots, \bm{M}}_{N^t-1}\biggr) +
\dfrac13 \ subdiag^{(2)}\biggl(\underbrace{\bm{M}, \cdots, \bm{M}}_{N^t-2}\biggr) \in \mathbb{R}^{N_u^sN^t \times N_u^sN^t} \\
\bm{A}_2^{st} &= \dfrac23 \delta \ diag\biggl(\underbrace{\bm{B}^T, \cdots, \bm{B}^T}_{N^t}\biggr) \in \mathbb{R}^{N_u^sN^t \times N_p^sN^t}  \quad 
\bm{A}_3^{st} = \dfrac23 \delta \ diag\biggl(\underbrace{\bm{C}^T, \cdots, \bm{C}^T}_{N^t}\biggr) \in \mathbb{R}^{N_u^sN^t \times N_\lambda N^t} \notag \\
\bm{A}_4^{st} &=  diag\biggl(\underbrace{\bm{B}, \cdots, \bm{B}}_{N^t}\biggr) \in \mathbb{R}^{N_p^sN^t \times N_u^sN^t}  \qquad \qquad 
\bm{A}_7^{st} =  diag\biggl(\underbrace{\bm{C}, \cdots, \bm{C}}_{N^t}\biggr) \in \mathbb{R}^{N_\lambda N^t \times N_u^sN^t} \notag \\
\bm{R}^{st}(\bm{\mu}) &= \frac23 \delta \ diag\biggl(\underbrace{\bm{R}(\bm{\mu}), \cdots, \bm{R}(\bm{\mu})}_{N^t} \biggr) = 
\frac23 \delta \sum\limits_{q=1}^{N_c} \rho_c^q(\bm{\mu}) \ diag\biggl(\underbrace{\bm{R}^q, \cdots, \bm{R}^q}_{N^t} \biggr) \in \mathbb{R}^{N_u^sN^t \times N_u^sN^t} \notag
\end{align}
%\end{alignedat}
%\end{equation}
Here $diag: \mathbb{R}^{r_1 \times c_1} \times \cdots \times \mathbb{R}^{r_K \times c_K} \to \mathbb{R}^{(r_1+\cdots+r_K)\times(c_1+\cdots+c_K)}$ is the function that builds a block diagonal matrix from a set of $K$ input matrices; $subdiag^{(n)}$ ($n \in \mathbb{N}$) is equivalent to $diag$, but with respect to the $n$--th lower--diagonal. Before reporting the expressions of the blocks appearing in the right--hand side vector of Eq.\eqref{eq: monolitic_FOM_system}, let us make some additional assumptions:
\begin{enumerate}
	\item We assume that $\bm{f}(t) = \bm{0} \ \forall t \in [0,T]$. This implies, for instance, that we neglect the effect of gravity. We also assume that $\bm{h}(t) = \bm{0} \ \forall t \in [0,T]$, which means that we only deal with homogeneous Neumann BCs. 
	\item We assume that $\bm{u}_0 = \bm{0}$ in $\Omega$. Moreover, we suppose that $\bm{u}(t) = \bm{0} \ \forall t \leq 0$; in this way the BDF2 scheme can be employed also in the first iteration.
	\item For all $k \in \{1, \dots, N_D\}$, we assume that the Dirichlet datum $\vec{g}^{k, \bm{\mu}}$ on $\mytilde{\Gamma}_D^k$ can be factorized as:
	\begin{equation}
	\label{eq: factorization_dirichlet_datum}
	\vec{g}^{k, \bm{\mu}}(\bm{x},t) = \vec{g}^s_k(\bm{x})g^t_k(t;\bm{\mu}) \qquad \quad \text{for} \ (\bm{x},t) \in \ \mytilde{\Gamma}_D^k \times [0,T]~.
	\end{equation}
	Thus, we have that $\tilde{\bm{g}}^{k,\bm{\mu}}(t) = \tilde{\bm{g}}^s_k g^t_k(t; \bm{\mu})$, where $\tilde{\bm{g}}^s_k \in \mathbb{R}^{N_{\lambda}^k}$ is such that $\left(\tilde{\bm{g}}^s_k\right)_i = \int_{\mytilde{\Gamma}_D^k} \vec{g}^s_k \cdot \vec{\eta}_i^k$ .
\end{enumerate}
Assumptions 1 and 2 imply that $\bm{F}^{st}_1 = \bm{0} \in \mathbb{R}^{N_u^s}$. Therefore, the only non--zero block in $\bm{F}^{st}$ is $\bm{F}^{st}_3 \in \mathbb{R}^{N_\lambda N^t}$ and it writes as
\begin{equation}
\label{eq: monolitic_FOM_blocks_vector}
\bm{F}^{st}_3(\bm{\mu}) =
\begin{bmatrix}
%\left(\tilde{\bm{g}}^s\right)^T g^t(t_1; \bm{\mu}), \cdots, \left(\tilde{\bm{g}}^s\right)^T g^t(t_{N^t}; \bm{\mu})
\tilde{\bm{g}}^{st}(t_1; \bm{\mu}) \\
\vdots \\ 
\tilde{\bm{g}}^{st}(t_{N^t}; \bm{\mu})
%\left(\tilde{\bm{g}}^s_1\right)^T g^t_1(t_1; \bm{\mu}), \cdots, \left(\tilde{\bm{g}}^s_{N_D}\right)^T g^t_{N_D}(t_1; \bm{\mu}), \cdots,\\ %\left(\tilde{\bm{g}}^s_1\right)^T g^t_1(t_{N^t}; \bm{\mu}), \cdots, \left(\tilde{\bm{g}}^s_{N_D}\right)^T g^t_{N_D}(t_{N^t}; \bm{\mu})
\end{bmatrix}
\quad \text{with} \quad \tilde{\bm{g}}^{st}(t_n; \bm{\mu}) =
\begin{bmatrix}
\tilde{\bm{g}}^s_1 g^t_1(t_n; \bm{\mu})\\
\vdots \\ 
\tilde{\bm{g}}^s_{N_D} g^t_{N_D}(t_n; \bm{\mu})
\end{bmatrix}.
\end{equation}

\subsection{Well--posedness of the FOM problem} 
\label{subs: well posedness of the FOM}

As we pointed out in Subsections \ref{subs:strong_weak_stokes}--\ref{subs: discretization_stokes}, Problem \ref{pb:weak_form_stokes_2} features a twofold saddle point structure of type 1. In particular, the velocity is the primal variable, whereas pressure and Lagrange multipliers --- associated to the weak imposition of inhomogeneous Dirichlet BCs --- are the dual ones. As discussed in \cite{gatica2008characterizing,howell2011inf}, necessary and sufficient conditions for its well--posedness can be expressed by means of suitable \emph{inf--sup} conditions, which need to be satisfied both at continuous and at discrete level. As in \cite{pegolotti2021model}, we assume Problem \ref{pb:weak_form_stokes_2} to be well--posed in the continuous setting and we directly focus on the conditions to be enforced at discrete level. Based on classical theoretical results on the well--posedness of saddle point problems (see e.g. \cite{boffi2013mixed, brezzi1974existence, quarteroni2008numerical}), adopting an algebraic standpoint, the following inequality must hold
\begin{equation}
\label{eq: discrete_inf--sup FOM 1}
\exists \beta_F > 0 : \inf\limits_{(\bm{q}, \bm{\lambda}) \neq \bm{0}} \sup\limits_{\bm{v} \neq \bm{0}}  \dfrac{\bm{q}^T\bm{B}\bm{v} + \bm{\lambda}^T \bm{C} \bm{v}}{||\bm{v}||_{\bm{X}_u}\left(||\bm{q}||_{\bm{X}_p} + ||\bm{\lambda}||_{\bm{X}_\lambda}\right)} \geq \beta_F~,
\end{equation}
where $\bm{X}_u$, $\bm{X}_p$ and $\bm{X}_\lambda$ are symmetric and positive--definite matrices, defined as follows:
\begin{equation}
\label{eq: FOM spatial norms}
\bm{X}_u = \frac{1}{\rho}\bm{M} + \frac{1}{2\mu}\bm{A} \in \mathbb{R}^{N_u^s \times N_u^s}~; \qquad \bm{X}_p = \bm{M}^p \in \mathbb{R}^{N_p^s \times N_p^s}~; \qquad \bm{X}_{\lambda} = \bm{I}_{N_\lambda} \in \mathbb{R}^{N_\lambda \times N_\lambda}~.
\end{equation}
Here $\bm{M}^p$ defines the pressure mass matrix, hence $\bm{M}^p_{ij} = \int_{\Omega} \varphi_i^p \varphi_j^p$ for $i,j \in \{1, \dots, N_p^s\}$.

Theorem 3.1 in \cite{howell2011inf} states that Eq.\eqref{eq: discrete_inf--sup FOM 1} can be equivalently expressed in the following ways:
\begin{subequations}
	\begin{align}
	\label{eq: discrete_inf--sup FOM 2a}
	\exists \beta_F>0 : \inf\limits_{\bm{q} \neq \bm{0}} \sup\limits_{\bm{v} \neq \bm{0}}  \dfrac{ \bm{q}^T\bm{B}\bm{v}}{||\bm{v}||_{\bm{X}_u}||\bm{q}||_{\bm{X}_p}} \geq \beta_F \ \ \text{and}& \ \
	\inf\limits_{\bm{\lambda} \neq \bm{0}} \sup\limits_{\substack{\bm{v} \neq \bm{0} \\ \bm{B}\bm{v} = \bm{0}}}
	\dfrac{ \bm{\lambda}^T\bm{C}\bm{v}}{||\bm{v}||_{\bm{X}_u}||\bm{\lambda}||_{\bm{X}_\lambda}} \geq \beta_F~; \\
	\label{eq: discrete_inf--sup FOM 2b}
	\exists \beta_F>0 : \inf\limits_{\bm{q} \neq \bm{0}} \sup\limits_{\substack{\bm{v} \neq \bm{0} \\ \bm{C}\bm{v} = \bm{0}}}  \dfrac{ \bm{q}^T\bm{B}\bm{v}}{||\bm{v}||_{\bm{X}_u}||\bm{q}||_{\bm{X}_p}} \geq \beta_F \ \ \text{and}& \ \
	\inf\limits_{\bm{\lambda} \neq \bm{0}} \sup\limits_{\bm{v} \neq \bm{0}}
	\dfrac{ \bm{\lambda}^T\bm{C}\bm{v}}{||\bm{v}||_{\bm{X}_u}||\bm{\lambda}||_{\bm{X}_\lambda}} \geq \beta_F~.
	\end{align}
\end{subequations}
Therefore, on the one side the \emph{inf--sup} stability of Problem \ref{pb:weak_form_stokes_2} in the discrete setting is guaranteed if two distinct \emph{inf--sup} inequalities --- one for each of the dual fields --- are satisfied. However, on the other side, one of the two \emph{inf--sup} inequalities features an additional constraint on the primal variable. Thus, the independent fulfilment of the conventional \emph{inf--sup} conditions on the dual variables is not enough to guarantee well--posedness.

In this work, we claim the well--posedness of Problem \ref{pb:weak_form_stokes_2} in the discrete framework by considering Eq.\eqref{eq: discrete_inf--sup FOM 2a}, where the first inequality (related to pressure) is ``standard'', while the second one (related to Lagrange multipliers) features a supremum taken over velocities that satisfy the constraint $\bm{B}\bm{v} = \bm{0}$ (i.e. weakly divergence--free velocities). Proceeding as in \cite{pegolotti2021model}, on the one hand we guarantee the fulfilment of the first inequality in Eq.\eqref{eq: discrete_inf--sup FOM 2a} by employing $P2-P1$ Taylor--Hood Lagrangian finite elements \cite{hood1974navier} (i.e continuous piecewise polynomials of order $2$ for the velocity and of order $1$ for the pressure, built on a tetrahedral triangulation of the domain $\Omega$), which are the most popular example of stable discretization couple. On the other hand, instead, we only assess the second inequality empirically, by computing the condition number of the steady Stokes system matrix (see Eq.\eqref{eq: BDF2 matrices} -- right). In all our numerical tests (see Section \ref{sec:section5}) such a matrix proved to be well--conditioned, hence suggesting the well--posedness of the FOM problem at hand.

\section{Space--time reduced basis methods for parametrized unsteady incompressible Stokes equations}
\label{sec:section3}
Classical applications of PROMs to parametrized PDEs only allow to reduce their dimensionality in space. This could represent a significant limitation in problems where either the simulation interval should be very large or the timestep size should be very small in order to properly capture some relevant behaviours. As discussed in Section \ref{sec:introduction}, several attempts to solve this temporal--complexity bottleneck have been made. Among those, in this work we focus on the Space--Time Reduced Basis (ST--RB) methods introduced and analysed e.g. in \cite{urban2012new, urban2014improved, yano2014space, yano2014space2, choi2019space, choi2021space, kim2021efficient}.
\begin{comment}
Classical applications of ROMs to parametrized PDEs (and to dynamical systems in general) allow to reduce only spatial dimensionality, since the projection process takes place with respect to the spatial subspaces where the solution is sought at every time instant. Conversely, the temporal dimensionality of the problem does not change upon the reduction step, as the ODE stemming from the semi--discrete ROM is solved by means of numerical integration, typically with the same time integrator and timestep size employed for the FOM. Because of this, classical ROMs are characterized by a low dimensionality in space, but by a high one in time, which may limit the computational savings. This aspect could be particularly relevant in problems where either the simulation interval should be very large, as in many applications in computational fluid dynamics, or the timestep size should be very small, like in molecular dynamics simulations, in order to properly capture some relevant behaviours. As discussed in Section \ref{sec:introduction}, several attempts to solve this temporal--complexity bottleneck have been made. Among those, in this work we focus on Space--Time Reduced Basis (ST--RB) methods, firstly introduced in \cite{urban2012new, urban2014improved, yano2014space, yano2014space2} and further developed in \cite{choi2019space, choi2021space, kim2021efficient}.
\end{comment}

\subsection{ST--RB problem definition}
ST--RB methods allow to reduce the dimensionality also in time by projecting the FOM problem onto a low--dimensional spatio--temporal subspace, spanned by a suitable set of basis functions. We denote the spatio--temporal basis functions by $\bm{\pi}_i^u \in \mathbb{R}^{N_u^s N^t}$ ($i \in \{1,\cdots,n_u^{st}\}$) for the velocity, $\bm{\pi}_i^p \in \mathbb{R}^{N_p^s N^t}$ ($i \in \{1,\cdots,n_p^{st}\}$) for the pressure and $\bm{\pi}_i^\lambda \in \mathbb{R}^{N_\lambda N^t}$ ($i \in \{1,\cdots,n_\lambda^{st}\}$) for the Lagrange multipliers. The basis functions are generated by applying a truncated POD algorithm to the matrices storing the FOM solutions got during the so--called offline phase of the method for $N_{\bm{\mu}}$ randomly selected parameter values $\{\bm{\mu}_i\}_{i=1}^{N_{\bm{\mu}}}$, with $\bm{\mu}_i \in \mathcal{D}$. By construction the basis functions are orthonormal with respect to a suitable norm; a more detailed explanation is given in Subsection \ref{subs: basis generation with POD}. The discrete manifold of FOM solutions is approximated by the following low--dimensional subspace
\begin{equation}
\label{eq: approximate spatio--temporal subspace}
\begin{aligned}
\mathcal{ST}_{h,\delta} = \mathcal{ST}_{h,\delta}^u \times \mathcal{ST}_{h,\delta}^p \times \mathcal{ST}_{h,\delta}^\lambda \qquad \text{with} \quad \mathcal{ST}_{h,\delta}^w = span\{\bm{\pi}^w_i\}_{i=1}^{n_w^{st}}, \quad w\in \{u,p,\lambda\}~.
\end{aligned}
\end{equation}
Additionally, we define $n^{st} := n_u^{st} + n_p^{st} + n_\lambda^{st}$ as the dimension of $\mathcal{ST}_{h,\delta}$. The basis of $\mathcal{ST}_{h,\delta}$ can be encoded in the matrix $\bm{\Pi}$, defined as follows:
\begin{equation}
\label{eq: RB basis matrix Pi}
\bm{\Pi} = diag \left(\left[\bm{\pi}^u_1|...|\bm{\pi}^u_{n^{st}_u}\right], \left[\bm{\pi}^p_1|...|\bm{\pi}^p_{n^{st}_p}\right], \left[\bm{\pi}^\lambda_1|...|\bm{\pi}^\lambda_{n^{st}_\lambda}\right] \right) = diag \left( \bm{\Pi}^u, \bm{\Pi}^p, \bm{\Pi}^\lambda \right) \in \mathbb{R}^{N^{st} \times n^{st}}.
\end{equation}
The function $diag$ is defined as in Eq.\eqref{eq: monolitic_FOM_blocks_matrix}. We remark that, thanks to the orthonormality of the basis functions, the matrix $\bm{\Pi}$ identifies an orthogonal projection operator (with respect to a suitable norm), going from the spatio--temporal FOM space of dimension $N^{st}$ to the spatio--temporal reduced subspace $\mathcal{ST}_{h,\delta}$, of dimension $n^{st} \ll N^{st}$.

Adopting an algebraic perspective and employing the compact notation introduced in Eq.\eqref{eq: monolitic_FOM_system}, the application of ST--RB methods amounts at solving the following problem:
\begin{equation}
\label{eq: space time reduced problem}
\text{Find } \widehat{\bm{w}} \in \mathbb{R}^{n^{st}} \text{ such that } \qquad 
\mytilde{\bm{\Pi}}^T \left(\bm{F}^{st} - \bm{A}^{st}\bm{\Pi}\widehat{\bm{w}}\right) = \bm{0},
\end{equation}
where $\mytilde{\bm{\Pi}} \in \mathbb{R}^{N^{st} \times n^{st}}$ is a projection matrix, possibly parameter--dependent. Based on the definition of $\bm{\Pi}$ given in Eq.\eqref{eq: RB basis matrix Pi}, the term $\bm{\Pi}\widehat{\bm{w}}$ represents a linear combination of the reduced basis elements, whose weights are given by the entries of $\widehat{\bm{w}} \in \mathbb{R}^{n^{st}}$. Such term represents then the FOM reconstruction of the space--time reduced solution. So, the quantity $\bm{r}^{st} := \bm{F}^{st} - \bm{A}^{st}\bm{\Pi}\widehat{\bm{w}}$ identifies the FOM residual associated to the space--time reduced solution. In Eq.\eqref{eq: space time reduced problem} we are imposing such a residual to be orthogonal to some low--dimensional spatio--temporal subspace, whose basis is encoded in the matrix $\mytilde{\bm{\Pi}}$. We remark that, if we focus on spatial discretization, Eq.\eqref{eq: space time reduced problem} corresponds to a (Petrov--)~Galerkin projection, equivalent to the one performed in the classical formulation of the RB method, featuring dimensionality reduction only along the spatial dimension \cite{quarteroni2015reduced}. In particular, a Galerkin projection is performed if $\mytilde{\bm{\Pi}} = \bm{\Pi}$ and a Petrov--Galerkin projection otherwise. However, the employment of a finite differences scheme --- which does not stem from the weak formulation of the problem, but rather from the strong one --- to perform time integration prevents from identifying Eq.\eqref{eq: space time reduced problem} with an actual space--time (Petrov--)~Galerkin projection.

Ultimately, rearranging the terms in Eq.\eqref{eq: space time reduced problem}, the ST--RB problem writes as follows.
\begin{myproblem}{}{compact_space_time_reduced_system}
	Find $\widehat{\bm{w}} \in \mathbb{R}^{n^{st}}$ such that
	\begin{equation}
	\begin{aligned}
	\label{eq: compact space time reduced system}
	\widehat{\bm{A}}^{st}\widehat{\bm{w}} =  \widehat{\bm{F}}^{st} \quad \text{with} \quad \widehat{\bm{A}}^{st} &= \  \mytilde{\bm{\Pi}}^T\bm{A}^{st}\bm{\Pi} \ \in \mathbb{R}^{n^{st} \times n^{st}} \\
	\widehat{\bm{F}}^{st} &=  \ \mytilde{\bm{\Pi}}^T\bm{F}^{st} \ \ \ \ \in \mathbb{R}^{n^{st}}~.
	\end{aligned}
	\end{equation}
\end{myproblem}

\subsection{Offline phase: reduced basis generation with POD}
\label{subs: basis generation with POD}

Let us consider a set of $N_{\bm{\mu}}$ FOM velocity snapshots $\{\bm{u}_h^{st}(\bm{\mu}_k)\}_{k=1}^{N_{\bm{\mu}}}$, with $\bm{u}_h^{st}(\bm{\mu}_k) \in \mathbb{R}^{N_u^s\times N^t}$, computed by solving Eq.\eqref{eq: monolitic_FOM_system} for $N_{\bm{\mu}}$ distinct parameter values. These snapshots are stored in a third--order tensor $\bm{\mathcal{X}}^u \in \mathbb{R}^{N_u^s \times N^t \times N_{\bm{\mu}}}$ so that
\begin{equation}
\label{eq: snapshots_tensor}
\bm{\mathcal{X}}^u_{ijk} = \left(\bm{u}_h^{st}(\bm{\mu}_k)\right)_{ij}, \qquad i \in \{1,\cdots,N_u^s\}, \ j \in \{1,\cdots,N^t\}, \  k \in \{1,\cdots,N_{\bm{\mu}}\}.
\end{equation}
Firstly, let us focus on the construction of the reduced basis in space, that at discrete level can be encoded in the matrix $\bm{\Phi}^u \in \mathbb{R}^{N_u^s\times n_u^s}$. We want the reduced basis to be orthonormal with respect to the norm induced by the matrix $\bm{X}_u$ (see Eq.\eqref{eq: FOM spatial norms}), which is indeed the $\left(H^1(\Omega)\right)^d$--norm. Denoting with $\bm{H}_u \in \mathbb{R}^{N_u^s \times N_u^s}$ the upper triangular matrix arising from the Cholesky decomposition of $\bm{X}_u$ (i.e. $\bm{X}_u = \bm{H}_u^T \bm{H}_u$) and with $\bm{\mathcal{X}}^u_{(1)} \in \mathbb{R}^{N_u^s \times N^t N_{\bm{\mu}}}$ the mode--1 unfolding of $\bm{\mathcal{X}}^u$ (see \cite{choi2019space}), we perform the Singular Value Decomposition of $\bm{H}_u \bm{\mathcal{X}}^u_{(1)}$, so
\begin{equation}
\label{eq: spatial SVD velocity}
\bm{H}_u \bm{\mathcal{X}}_{(1)}^u = \bm{V} \bm{\Sigma} \bm{Z}^T .
\end{equation}
$\bm{V} \in \mathbb{R}^{N_u^s \times N_u^s}$ and $\textbf{Z} \in \mathbb{R}^{N^t N_{\bm{\mu}}\times N^t N_{\bm{\mu}}}$ are orthogonal matrices, whereas $\bm{\Sigma} \in \mathbb{R}^{N_u^s \times N^t N_{\bm{\mu}}}$ is the pseudo--diagonal matrix storing the singular values $\{\sigma_i\}_{i=1}^{N_\sigma}$ of $\bm{H}_u \bm{\mathcal{X}}^u_{(1)}$ (with $N_\sigma := \min(N_u^s, N^t N_{\bm{\mu}})$ and $\sigma_i \geq 0 \ \forall i \in \{1,\cdots,N_\sigma\}$). If the singular values are sorted in decreasing order and if we denote by $\mytilde{\bm{\Phi}}^u \in \mathbb{R}^{N_u^s \times n_u^s}$ the matrix formed by the first $n_u^s \ll N_u^s$ columns of $\bm{V}$, then the velocity reduced basis in space can be computed as $\bm{\Phi}^u = \bm{H}_u^T \mytilde{\bm{\Phi}}^u = [\bm{\phi}^u_1|\cdots|\bm{\phi}^u_{n_u^s}]$. The columns $\{\bm{\phi}_i^u\}_{i=1}^{n_u^s}$ of the matrix $\bm{\Phi}^u$ represent the $n_u^s$--dimensional orthonormal basis that minimizes the total projection error of the snapshots --- with respect to the norm induced by $\bm{X}_u$ --- onto the column space of $\bm{\mathcal{X}}^u_{(1)}$ \cite{quarteroni2015reduced}. A common strategy consists in selecting $n_u^s$ as the smallest integer $N$ such that:
\begin{equation}
\label{eq: POD tolerance}
\dfrac{\sum_{j=1}^{N} \sigma_j^2}{\sum_{j=1}^{N_\sigma} \sigma_j^2} \geq 1 - \varepsilon_u^2~,
\end{equation}
where $\varepsilon_u \in \mathbb{R}^+$ is a tolerance to be chosen \emph{a priori}. The left--hand side of Eq.\eqref{eq: POD tolerance} represents the relative information (or energy) content of the POD basis.
\begin{remark}
Eq.\eqref{eq: spatial SVD velocity} represents a huge SVD problem, that oftentimes is computationally prohibitive. However, we are only interested in performing a truncated SVD, i.e. in computing the $n_u^s \ll N_u^s$ most significant modes of $\bm{H}_u\bm{\mathcal{X}}^u_{(1)}$ and the associated singular values. Such a task can be performed at a reasonable computational cost via iterative \cite{golub2013matrix} and/or randomized algorithms \cite{halko2011finding}. In this work, we employed the randomized POD algorithm proposed in \cite{halko2011finding}.
\end{remark} 

Among the possible strategies to compute the velocity reduced basis in time, we choose the \emph{Fixed temporal subspace via ST-HOSVD} algorithm, proposed in \cite{choi2019space}. It consists in performing a truncated POD (with tolerance $\varepsilon_u$) to the mode--2 unfolding of the spatial projection of the snapshots' tensor $\bm{\mathcal{X}}^u(\bm{\Phi}^u)_{(2)} \in \mathbb{R}^{N^t \times n_u^s N_{\bm{\mu}}}$. The third--order tensor $\bm{\mathcal{X}}^u(\bm{\Phi}^u) \in \mathbb{R}^{n_u^s \times N^t \times N_{\bm{\mu}}}$ is defined as $\bm{\mathcal{X}}^u(\bm{\Phi}^u) = \bm{\mathcal{X}}^u \times_1 \bm{\Phi}^u$, so that $\bm{\mathcal{X}}^u(\bm{\Phi}^u)_{(1)} = (\bm{\Phi}^u)^T \bm{X}^u \bm{\mathcal{X}}^u_{(1)}$. The number $n_u^t$ of temporal reduced basis elements is chosen according to the criterion in Eq.\eqref{eq: POD tolerance} and the velocity temporal reduced basis is encoded in the matrix $\bm{\Psi}^u = [\bm{\psi}^u_1|\cdots|\bm{\psi}^u_{n_u^t}] \in \mathbb{R}^{N^t \times n_u^t}$, such that $\left(\bm{\Psi}^u\right)^T \bm{\Psi}^u = \bm{I}_{n_u^t}$. 

In order to derive an expression for the spatio--temporal reduced basis, we work under a space--time factorization assumption and we suppose that the discrete FOM solution manifold can be well approximated by the low--dimensional vector space
\begin{equation}
\label{eq: space--time subspace factorization}
\mathcal{ST}^u_{h,\delta} = \mathcal{S}^u_h \otimes \mathcal{T}^u_\delta \qquad \text{with} \ \ \mathcal{S}^u_h = span\big\{\bm{\phi}_i^u\big\}_{i=1}^{n_u^s} \ \ \text{and} \ \ \mathcal{T}^u_\delta = span\big\{\bm{\psi}_j^u\big\}_{j=1}^{n_u^t}~.
\end{equation}
A generic element of the velocity space--time reduced basis can then be written as
\begin{equation}
\label{eq: space--time RB element velocity}
\bm{\pi}^u_{\mathcal{F}_u(i,j)} = \bm{\phi}_i^u \otimes \bm{\psi}_j^u \in \mathbb{R}^{N_u^s \times N^t} \qquad i \in \{1,\cdots,n_u^s\}, \ \ j \in \{1,\cdots,n_u^t\}~,
\end{equation}
where $\mathcal{F}_u: (i,j) \mapsto (i-1)n_u^t + j$ is a bijective mapping from the space and time bases indexes to the space--time basis index and $\otimes: \mathbb{R}^N \phantom{\hspace{-2pt}} \times \mathbb{R}^M \to \mathbb{R}^{N \times M}$ (with $N,M \in \mathbb{N}$) denotes the outer product operator, i.e. $\left(\bm{u} \otimes \bm{v}\right)_{ij} = \bm{u}_i \bm{v}_j$.

The same procedure is followed to assemble the reduced bases for the other unknowns in Eq.\eqref{eq: monolitic_FOM_system}, i.e. pressure and Lagrange multipliers. In particular:
\begin{itemize}
	\item For the pressure, we define $\mathcal{ST}^p_{h,\delta} = \mathcal{S}^p_h \otimes \mathcal{T}^p_\delta$. We orthonormalize the reduced basis in space with respect to the $L^2(\Omega)$--norm (see Eq.\eqref{eq: FOM spatial norms}).The generic pressure space--time reduced basis element reads as
	\begin{equation}
	\label{eq: space--time RB element pressure}
	\bm{\pi}^p_{\mathcal{F}_p(i,j)} = \bm{\phi}_i^p \otimes \bm{\psi}_j^p \in \mathbb{R}^{N_p^s \times N^t} \qquad i \in \{1,\cdots n_p^s\}, \ \ j \in \{1,\cdots n_p^t\}~,
	\end{equation}
	with $\mathcal{F}_p: (i,j) \mapsto (i-1)n_p^t + j$. $\varepsilon_p \in \mathbb{R}^+$ is the pressure POD tolerance (in space and in time). We denote the pressure reduced basis in space by $\bm{\Phi}^p \in \mathbb{R}^{N_p^s \times n_p^s}$ and the pressure reduced basis in time by $\bm{\Psi}^p \in \mathbb{R}^{N^t \times n_p^t}$.
	\item We compute a different reduced basis for each set of Lagrange multipliers, corresponding to different portions of $\mytilde{\Gamma}_D$. So, for all $k \in \{1, \dots, N_D\}$, we define the spaces $\mathcal{ST}_{h,\delta}^{\lambda_k}$ such that $\mathcal{ST}_{h,\delta}^\lambda = \prod_{k=1}^{N_D} \mathcal{ST}_{h,\delta}^{\lambda_k}$, with $\mathcal{ST}^{\lambda_k}_{h,\delta} = \mathcal{S}^{\lambda_k}_h \otimes \mathcal{T}_\delta^{\lambda_k}$. Since the space of Lagrange multipliers has been discretized by means of a relatively small number of basis functions (see~\cite{pegolotti2021model} for details), we only compute the temporal reduced bases. So, we define the generic space--time reduced basis element for the $k$--th set of Lagrange multipliers as:
	\begin{equation}
	\label{eq: space--time RB element lagmult}
	\bm{\pi}^{\lambda_k}_{\mathcal{F}_{\lambda_k}(i,j)} = \bm{e}_i \otimes \bm{\psi}_j^{\lambda_k} \in \mathbb{R}^{N_{\lambda}^k \times N^t} \qquad i \in \{1,\cdots,N_{\lambda}^k\}, \ \ j \in \{1,\cdots n_{\lambda_k}^t\}~, 
	\end{equation}
	where $\bm{e}_i \in \mathbb{R}^{N_\lambda^k}$ is the $i$--th canonical basis vector and $\mathcal{F}_{\lambda_k}: (i,j) \mapsto (i-1)n_{\lambda_k}^t + j$. We define the dimension of the space--time reduced basis as $n_{\lambda_k}^{st} := N_\lambda^k n_{\lambda_k}^t$. The space--time reduced basis for $\mathcal{ST}_{h,\delta}^{\lambda}$ can be then assembled exploiting the definition of the latter and its dimension is equal to $n_\lambda^{st} := \sum_{k=1}^{N_D} n_{\lambda_k}^{st}$. We select the same POD tolerance $\epsilon_\lambda \in \mathbb{R}^+$ for every set of multipliers. We denote the reduced basis in space as $\bm{\Phi}^\lambda = \bm{I}_{N_\lambda}$ --- since no reduction in space takes place --- and the reduced basis in time as $\bm{\Psi}^{\lambda} = diag\left(\bm{\Psi}^{\lambda_1}, \cdots,\bm{\Psi}^{\lambda_{N_D}}\right) \in \mathbb{R}^{N^t \times n_\lambda^t}$, with $n_\lambda^t := \sum_{k=1}^{N_D} n_{\lambda_k}^t$. Finally, for the sake of conciseness we define $\mathcal{F}_\lambda : \mathbb{R}^{N_\lambda} \times \mathbb{R}^{n_\lambda^t} \to \mathbb{R}^{n_{\lambda}^{st}}$ as the global index mapping for Lagrange multipliers.
\end{itemize}

Ultimately, the global space--time reduced basis can be encoded in the matrix $\bm{\Pi} \in \mathbb{R}^{N^{st} \times n^{st}}$ defined in Eq.\eqref{eq: RB basis matrix Pi}, where $\bm{\Pi}^u = \bm{\Phi}^u \otimes \bm{\Psi}^u, \ \ \bm{\Pi}^p = \bm{\Phi}^p \otimes \bm{\Psi}^p, \ \ \bm{\Pi}^{\lambda} = \bm{I}_{N_{\lambda}} \otimes \bm{\Psi}^{\lambda}$.

\subsection{Offline Phase: assembling of parameter--independent quantities}
\label{subs: assembling parameter-independent quantities}
The second step of the offline phase consists in assembling, once and for all, the space--time reduced parameter--independent quantities. In this work, we consider a problem featuring an affinely parametrized left--hand side term and non--affinely parametrized Dirichlet data, whose information is stored in the right--hand side vector. In the general case, approximate affine decompositions can be retrieved by exploiting the (M)DEIM algorithm \cite{chaturantabut2010nonlinear}. Let us define the parameter--independent space--reduced matrices
\begin{equation}
\label{eq: ROM matrices}
\begin{alignedat}{4}
\widehat{\bm{A}} &= (\bm{\Phi}^u)^T\bm{A}\bm{\Phi}^u  \in \mathbb{R}^{n_u^s \times n_u^s} \quad &
\widehat{\bm{B}}^T &= (\bm{\Phi}^u)^T\bm{B}^T\bm{\Phi}^p  \in \mathbb{R}^{n_u^s \times n_p^s}
\quad & 
\widehat{\bm{B}} &= (\bm{\Phi}^p)^T\bm{B}\bm{\Phi}^u  \in \mathbb{R}^{n_u^s \times n_p^s} 
\\
\widehat{\bm{M}} &= (\bm{\Phi}^u)^T\bm{M}\bm{\Phi}^u  \in \mathbb{R}^{n_u^s \times n_u^s}
\quad &
\widehat{\bm{C}}^T &= (\bm{\Phi}^u)^T\bm{C}^T  \in \mathbb{R}^{n_u^s \times N_\lambda}
\quad &
\widehat{\bm{C}} &= \bm{C}\bm{\Phi}^u  \in \mathbb{R}^{N_\lambda \times n_u^s}
\end{alignedat}
\end{equation}
and the time--reduced matrices
\begin{equation}
\label{eq: temporal bases combined}
\begin{alignedat}{2}
\bm{\Psi}^{u,p} &= (\bm{\Psi}^u)^T\bm{\Psi}^p \ \in \mathbb{R}^{n_u^t \times n_p^t} \qquad \qquad & \bm{\Psi}^{u,\lambda} &= (\bm{\Psi}^u)^T\bm{\Psi}^\lambda \ \in \mathbb{R}^{n_u^t \times n_\lambda^t}~.
\end{alignedat}
\end{equation}
Also, we define the space--reduced affine components of the parametrized reaction matrix
\begin{equation}
\label{eq: ROM matrix reaction}
\widehat{\bm{R}}^q = \left(\bm{\Phi}^u\right)^T \bm{R}^q \bm{\Phi}^u \in \mathbb{R}^{n_u^s \times n_u^s} 
\qquad \text{such that} \qquad 
\widehat{\bm{R}}(\bm{\mu}) = \left(\bm{\Phi}^u\right)^T \bm{R}(\bm{\mu}) \bm{\Phi}^u = \sum_{q=1}^{N_{c}} \rho_c^q (\bm{\mu}) \widehat{\bm{R}}^q~.
\end{equation}
Leveraging the saddle point structure of $\bm{A}^{st}$ (see Eq.\eqref{eq: monolitic_FOM_blocks_matrix}), the space--time reduced left--hand side matrix $\widehat{\bm{A}}^{st}$ can be written as follows
\begin{equation}
\begin{aligned}
\label{eq: reduced A}
\widehat{\bm{A}}^{st} 
&=
\begin{bmatrix}
\big(\mytilde{\bm{\Pi}}^u\phantom{\hspace{-1mm}}\big)^T \bm{A}^{st}_1 \bm{\Pi}^u & \big(\mytilde{\bm{\Pi}}^u\phantom{\hspace{-1mm}}\big)^T\bm{A}^{st}_2\bm{\Pi}^p
& \big(\mytilde{\bm{\Pi}}^u\phantom{\hspace{-1mm}}\big)^T \bm{A}^{st}_3\bm{\Pi}^\lambda \\
\big(\mytilde{\bm{\Pi}}^p\phantom{\hspace{-1mm}}\big)^T\bm{A}^{st}_4\bm{\Pi}^u & & \\
\big(\mytilde{\bm{\Pi}}^\lambda\phantom{\hspace{-1mm}}\big)^T\bm{A}^{st}_7\bm{\Pi}^u & & 
\end{bmatrix}
+
\begin{bmatrix}
\big(\mytilde{\bm{\Pi}}^u\phantom{\hspace{-1mm}}\big)^T \bm{R}^{st}(\bm{\mu})\bm{\Pi}^u & & \\
& & \\
& & 
\end{bmatrix}\\
&= 
\begin{bmatrix}
\widehat{\bm{A}}_1^{st} & \widehat{\bm{A}}_2^{st}
& \widehat{\bm{A}}_3^{st} \\
\widehat{\bm{A}}_4^{st} & & \\
\widehat{\bm{A}}_7^{st} & & 
\end{bmatrix}
+
\begin{bmatrix}
\widehat{\bm{R}}^{st}(\bm{\mu}) & & \\
& & \\
& & 
\end{bmatrix}~.
\end{aligned}
\end{equation}
If the left--hand side projection matrices $\mytilde{\bm{\Pi}}^u$, $\mytilde{\bm{\Pi}}^p$, $\mytilde{\bm{\Pi}}^\lambda$ are parameter--independent (as for instance in the case of a Galerkin projection), all the parameter--independent blocks of $\widehat{\bm{A}}^{st}$ can be efficiently computed once and for all, by combining the matrices in Eqs.\eqref{eq: ROM matrices}--\eqref{eq: temporal bases combined} as follows:
\begin{equation}
\label{eq: ROM spacetime blocks}
\begin{alignedat}{2}
\left(\widehat{\bm{A}}_1^{st}\right)_{\ell m} &= \left(\widehat{\bm{M}}+\frac23\delta\widehat{\bm{A}}\right)_{\ell_s m_s}\delta_{\ell_t,m_t}
-\dfrac{4}{3}\widehat{\bm{M}}_{\ell_s m_s}(\bm{\psi}^u_{\ell_t})_{2:}^T(\bm{\psi}^u_{m_t})_{:-1}
+\dfrac{1}{3}\widehat{\bm{M}}_{\ell_s m_s}(\bm{\psi}^u_{\ell_t})_{3:}^T(\bm{\psi}^u_{m_t})_{:-2} \\
\left(\widehat{\bm{A}}^{st}_2\right)_{\ell k} &= \frac23 \delta \widehat{\bm{B}}^T_{\ell_s k_s} \bm{\Psi}_{\ell_t k_t}^{u,p} 
\qquad \qquad \quad 
\left(\widehat{\bm{A}}^{st}_4\right)_{k m} = \widehat{\bm{B}}_{k_s m_s} \bm{\Psi}_{m_t k_t}^{u,p} \\
\left(\widehat{\bm{A}}^{st}_3\right)_{\ell j} &= \frac23 \delta \widehat{\bm{C}}^T_{\ell_s j_s} \bm{\Psi}_{\ell_t j_t}^{u,\lambda}
\qquad \qquad \quad \
\left(\widehat{\bm{A}}^{st}_7\right)_{j m} = \widehat{\bm{C}}_{j_s m_s} \bm{\Psi}_{m_t j_t}^{u,\lambda}
\end{alignedat}
\end{equation}
for $\ell = \mathcal{F}_u(\ell_s, \ell_t)$, $m = \mathcal{F}_u(m_s, m_t)$ ($\ell_s,m_s \in \{1,\cdots n_u^s\}$, $\ell_t,m_t \in \{1,\cdots n_u^t\}$), $k = \mathcal{F}_p(k_s, k_t)$ ($k_s \in \{1,\cdots n_p^s\}$, $k_t \in \{1,\cdots n_p^t\}$), $j = \mathcal{F}_\lambda(j_s, j_t)$ ($j_s \in \{1,\cdots N_\lambda\}$, $j_t \in \{1,\cdots n_\lambda^t\}$). The notations $\bm{v}_{i:}$, $\bm{v}_{:-i}$ denote the sub--vector of a given vector $\bm{v}$ containing all the entries from the $i$--th to the last one and from the first one to the $i$--th from last, respectively.

\noindent Exploiting the affine parametrization of the reaction term (see Eq.\eqref{eq: ROM matrix reaction}), it also holds that
\begin{equation}
\label{eq: ROM matrix reaction ST}
\widehat{\bm{R}}_1^{st}(\bm{\mu}) = \sum_{q=1}^{N_c} \rho_c^q (\bm{\mu}) \widehat{\bm{R}}_q^{st} \qquad \text{where} \qquad \widehat{\bm{R}}_q^{st} = \big(\mytilde{\bm{\Pi}}^u\phantom{\hspace{-1mm}}\big)^T \bm{R}^q \bm{\Pi}^u \in \mathbb{R}^{n_u^s n_u^t \times n_u^s n_u^t}~.
\end{equation}
Hence, the matrices $\{\widehat{\bm{R}}_q^{st}\}_{q=1}^{N_c}$ can be pre--assembled during the offline phase, drastically lightening the computational burden. We refer to Appendix \ref{appendix: ST--RB assembling} for a more detailed explanation of the assembling phase, in the particular case of a Galerkin projection.

\begin{remark}
The assembling of the space--time reduced blocks in Eqs.\eqref{eq: ROM spacetime blocks}--\eqref{eq: ROM matrix reaction ST} reveals the advantage of exploiting the space--time factorization paradigm (see Eq.\eqref{eq: approximate spatio--temporal subspace}). Indeed, the issue posed by considering a non--factorized space--time reduced subspace is not related to the reduced bases computation, but rather it stems from the projection operations. For instance, let us focus on the assembling of the space--reduced blocks $\{\widehat{\bm{A}}_j^{st}\}_j$ in Eq.\eqref{eq: ROM spacetime blocks}.  
The adoption of a non--factorized approach imposes to compute the projection of the full--order space--time blocks $\{\bm{A}_j^{st}\}_j$ onto the space--time reduced subspace. For any $j$, the inner block structure of $\bm{A}_j^{st}$ can be leveraged  and the explicit storage of the matrix can be avoided by employing \emph{ad hoc} streaming techniques for the projection computation. Nevertheless, the cost of the latter remains huge in practical applications, since it involves matrix--matrix multiplications that depend on the number of space--time full--order DOFs. Taking advantage of space--time factorization allows to drastically lighten the (offline) computational burden. Indeed, space--reduced and time--reduced quantities can be pre--assembled independently and then suitably combined at a later stage (see Eq.\eqref{eq: ROM spacetime blocks}). Ultimately, considering non--factorized space--time reduced subspaces may provide better approximation properties, but the offline computational cost imposes restrictions to the actual applicability of the method. 
\end{remark}

\subsection{Online phase}
During the online phase, we are interested in computing the solution to the problem at hand for a given parameter value $\bm{\mu}^* \in \mathcal{D}$. This comprises three steps:
\begin{enumerate}
	\item The assembling of the reduced parameter--dependent quantities.
	\item The computation of the reduced solution.
	\item The reconstruction of an approximate FOM solution from the space--time reduced one.
\end{enumerate}

Concerning the first step, we refer the reader to Appendix \ref{appendix: ST--RB assembling}. Upon having constructed $\widehat{\bm{A}}^{st}$ and $\widehat{\bm{F}}^{st}$, we are left with solving the $n^{st}$--dimensional dense linear system of Eq.\eqref{eq: monolitic_FOM_system}. Since $n^{st} \ll N^{st}$, significant speedups with respect to the FOM can be realized.

Finally, once the space--time reduced solution $\widehat{\bm{w}}(\bm{\mu}^*) \in \mathbb{R}^{n^{st}}$ is computed, it can be post--processed and re--projected onto the FOM space. This task can be efficiently performed if the vector $\widehat{\bm{w}}(\bm{\mu}^*) \in \mathbb{R}^{n^{st}}$ is suitably reshaped into a matrix $\widehat{\bm{w}}^M(\bm{\mu}^*) \in \mathbb{R}^{n^s \times n^t}$, being $n^s = n_u^s+ n_p^s + N_\lambda$ and $n^t = n_u^t + n_p^t + n_\lambda^t$. Indeed, the FOM reconstruction $\bm{w}_h^{st}(\bm{\mu}^*)$ of $\widehat{\bm{w}}(\bm{\mu}^*)$ writes as follows:
\begin{equation}
\label{eq: FOM reconstruction}
\bm{w}_h^{st}(\bm{\mu}^*) = \bm{\Phi} \widehat{\bm{w}}^M(\bm{\mu}^*) \bm{\Psi}^T \quad \in \mathbb{R}^{N^s \times N^t}~,
\end{equation}
where $\bm{\Phi} := diag\left(\bm{\Phi}^u, \bm{\Phi}^p, \bm{\Phi}^\lambda\right) \in \mathbb{R}^{N^s \times n^s}$ and $\bm{\Psi} := diag\left(\bm{\Psi}^u, \bm{\Psi}^p, \bm{\Psi}^\lambda\right) \in \mathbb{R}^{N^t \times n^t}$ are the global reduced bases in space and in time, respectively.

\subsection{Definition of the norms}
\label{subs: definition of the norms}
Concerning the spatial dimension, we already introduced the norms that we employed in Subsection \ref{subs: basis generation with POD} and we defined the corresponding matrices in Eq.\eqref{eq: FOM spatial norms}.

Since temporal reduced bases are derived by imposing orthonormality in the Euclidean norm, we can define spatio--temporal norms as the ones induced by the following matrices:
\begin{equation}
\label{eq: ST norms matrices}
\bm{X}_u^{st} = diag\biggl(\underbrace{\bm{X}_u, \cdots, \bm{X}_u}_{N^t}\biggr), \quad
\bm{X}_p^{st} = diag\biggl(\underbrace{\bm{X}_p, \cdots, \bm{X}_p}_{N^t}\biggr), \quad 
\bm{X}_\lambda^{st} = diag\biggl(\underbrace{\bm{X}_\lambda, \cdots, \bm{X}_\lambda}_{N^t}\biggr).
\end{equation}
The global spatio--temporal norm matrix is then given by $\bm{X}^{st} := diag(\bm{X}_u^{st}, \bm{X}_p^{st}, \bm{X}_\lambda^{st}) \in \mathbb{R}^{N^{st} \times N^{st}}$.

Let us consider $\bm{\varphi}_j = \mathit{vec}(\bm{\phi}_j \otimes \bm{\psi}_j) \in \mathbb{R}^{N^{st}}$ ($j \in \mathbb{N})$, being $\bm{\phi}_j \in \mathbb{R}^{N^s}$ the vector of DOFs arising from the FE discretization of a spatial function $\phi_j=\phi_j(\bm{x})$ and $\bm{\psi}_j \in \mathbb{R}^{N^t}$ the vector storing the evaluations of a temporal function $\psi_j=\psi_j(t)$ at the equispaced time instants $\{t_n\}_{n=1}^{N^t}$ in $[0,T]$. Here $\mathit{vec}: \mathbb{R}^{N^s \times N^t} \to \mathbb{R}^{N^{st}}$ denotes the vectorization operator. Then, we have that:
\begin{equation}
\label{eq: space--time inner product factorization}
\left(\bm{\varphi}_1, \bm{\varphi}_2\right)_{\bm{X}^{st}} = \sum\limits_{n=1}^{N^t} \left(\bm{\phi}_1(\bm{\psi}_1)_n, \bm{\phi}_2(\bm{\psi}_2)_n\right)_{\bm{X}} = \left(\bm{\phi}_1, \bm{\phi}_2\right)_{\bm{X}}  \sum\limits_{n=1}^{N^t} (\bm{\psi}_1)_n (\bm{\psi}_2)_n~,
\end{equation}
where $\bm{X}^{st} \in \mathbb{R}^{N^{st} \times N^{st}}$ is a block--diagonal matrix constructed from the symmetric and positive definite norm matrix $\bm{X} \in \mathbb{R}^{N^s \times N^s}$, as the ones in Eq.\eqref{eq: ST norms matrices}. A consequence of Eq.\eqref{eq: space--time inner product factorization} is that $\left(\bm{\varphi}_j, \bm{\varphi}_j\right)_{\bm{X}^{st}} = \vert\vert \bm{\varphi}_j \vert\vert_{\bm{X}^{st}}^2 = \vert\vert \bm{\phi}_j \vert\vert_{\bm{X}}^2 \vert\vert \bm{\psi}_j \vert\vert_2^2$. So, the spatio--temporal norm factorizes into the product between the norms of the spatial and of the temporal factors. Incidentally, notice that Eq.\eqref{eq: space--time inner product factorization} entails that the reduced bases encoded by the columns of the matrices $\bm{\Pi}^u$, $\bm{\Pi}^p$, $\bm{\Pi}^\lambda$ are orthonormal with respect to the norms induced by the matrices in Eq.\eqref{eq: ST norms matrices}.

\subsection{Well-posedness of the ST--RB method}
\label{subs: well posedness of the ST--RB methods}
In Subsection \ref{subs: well posedness of the FOM}, we highlighted that Problem \ref{pb:weak_form_stokes_2} features a (twofold) saddle point structure and, as a consequence, it is associated with stability issues, related to the discretization of the spaces of the primal (velocity) and dual (pressure and Lagrange multipliers) fields. Assuming Problem \ref{pb:weak_form_stokes_2} to be well--posed in the continuous setting, well--posedness in the discrete framework can be retained by satisfying the two \emph{inf--sup} conditions in Eqs.\eqref{eq: discrete_inf--sup FOM 2a}-\eqref{eq: discrete_inf--sup FOM 2b}. See Subsection \ref{subs: well posedness of the FOM} for details. However, even if a stable discretization is considered for the FOM, there is no guarantee for the \emph{inf--sup} conditions to hold also for the reduced system (see e.g. \cite{deparis2008reduced,deparis2009reduced,negri2015reduced,rozza2013reduced}). The literature presents several possibilities to deal with the loss of stability of saddle point problems in the context of model order reduction in space. In this work, we considered two of them, namely the supremizers enrichment \cite{rozza2005optimization, ballarin2015supremizer, dalsanto2019hyper} and the employment of least--squares Petrov--Galerkin reduced basis (LS--PG--RB) approaches for residual minimization \cite{dalsanto2019algebraic, carlberg2017galerkin}. In the framework of space--time model order reduction, these two strategies lead to the development of the ST--GRB and ST--PGRB methods for unsteady parametrized incompressible Stokes equations, respectively.

\subsubsection{Velocity reduced basis enrichment}
\label{subs: ST--GRB supremizers}
before tackling space--time model order reduction, let us focus on the stability of the space--reduced formulation. To this aim, we consider the supremizers enrichment approach, whose central principle is to augment the reduced basis for the velocity with additional elements (called \emph{supremizers}) that are computed to ensure \emph{inf--sup} stability also in the reduced framework. 
We remark that, since the ``coupling'' matrices $\bm{B}$ (for pressure) and $\bm{C}$ (for Lagrange multipliers) are not parameter--dependent in the current setting, exact supremizers can be found. In the general case where parametric dependency involves the ``coupling'' matrices (as for instance upon a parametrized geometric transformation of the domain \cite{pegolotti2021model}), approximate supremizers should be computed, in order for the velocity reduced basis to be parameter--independent \cite{ballarin2015supremizer}. To guarantee well--posedness, the two following \emph{inf--sup} inequalities (reduced counterpart of Eq.\eqref{eq: discrete_inf--sup FOM 2b}) have to be satisfied:
\begin{equation}
\label{eq: discrete inf--sup ROM}
\exists \beta_{R}>0 : \inf\limits_{\widehat{\bm{q}} \neq \bm{0}} \sup\limits_{\substack{\widehat{\bm{v}} \neq \bm{0} \\ \widehat{\bm{C}}\widehat{\bm{v}} = \bm{0}}}  \dfrac{\widehat{\bm{q}}^T\widehat{\bm{B}}\widehat{\bm{v}}}{||\widehat{\bm{v}}||_2||\widehat{\bm{q}}||_2} \geq \beta_{R} \ \ \text{and} \ \
\inf\limits_{\widehat{\bm{\lambda}} \neq \bm{0}} \sup\limits_{\widehat{\bm{v}} \neq \bm{0}}
\dfrac{\widehat{\bm{\lambda}}^T\widehat{\bm{C}}\widehat{\bm{v}}}{||\widehat{\bm{v}}||_2||\widehat{\bm{\lambda}}||_2} \geq \beta_{R}~.
\end{equation}
Here $||\cdot||_2$ denotes the Euclidean norm, which is identical to the ones induced by the matrices in Eq.\eqref{eq: FOM spatial norms} because of the orthonormality properties of the reduced bases (see Subsection \ref{subs: basis generation with POD}).\\
Since the problem at hand features a twofold saddle point structure, two distinct sets of supremizers are computed. The first one --- denoted as $\mathcal{S}_h^p := \left\{\bm{s}_j^{u,p}\right\}_{j=1}^{n_p^s}$ --- is assembled by selecting, for each pressure mode $\bm{\phi}_j^p$, the velocity $\bm{s}_j^{u,p}$ that allows to attain the supremum in the pressure \emph{inf--sup} inequality of Eq.\eqref{eq: discrete_inf--sup FOM 2b}.  Its elements are computed from the solutions to the following set of linear systems, featuring a (onefold) saddle point structure:
\begin{equation}
\label{eq: supremizers pressure}
\begin{bmatrix}
\bm{X}_u  & \bm{C}^T \\
\bm{C} &
\end{bmatrix}
\begin{bmatrix}
\bm{s}_j^{u,p} \\ \bm{\lambda}_j
\end{bmatrix} 
= 
\begin{bmatrix}
\bm{B}^T \bm{\phi}_j^p \\ \quad
\end{bmatrix} \qquad \quad \text{with } j \in \{1,\cdots, n_p^s\}~.
\end{equation}
We define $\mathcal{S}_h^{u, p+} = span\left\{\left\{\bm{\phi}_i^u\right\}_{i=1}^{n_u^s}, \left\{\bm{s}_j^{u,p}\right\}_{j=1}^{n_p^s}\right\}$ as the space--reduced velocity subspace, enriched with pressure supremizers. The columns of the matrix $\bm{\Phi}^{u, p+} = \left[\bm{\phi}_1^u\vert\cdots\vert\bm{\phi}_{n_u^s}^u\vert\bm{s}^{u,p}_1\vert\cdots\vert\bm{s}^{u,p}_{n_s^p}\right] \in \mathbb{R}^{N_u^s \times (n_u^s+n_p^s)}$ are then a basis of $\mathcal{S}_h^{u, p+}$. 

The second set of supremizers is instead constructed from the bases of the spaces of Lagrange multipliers $\mathcal{L}_h^k$. Incidentally, it is worth pointing out that the problem at hand actually features a $(N_D+1)$--fold saddle point structure, rather than a twofold one; indeed $N_D$ distinct dual fields are defined to weakly impose inhomogeneous Dirichlet BCs. Therefore, according to \cite{gatica2008characterizing}, the second \emph{inf--sup} inequality in Eq.\eqref{eq: discrete_inf--sup FOM 2b} should be rewritten in terms of the local coupling matrices $\{\bm{C}^k\}_{k=1}^{N_D}$ as follows:
\begin{equation}
\label{eq: discrete inf--sup FOM 3b}
\forall k \in \{1, \cdots, N_D\} \ \ \exists \beta_F^k>0 : \ \ \inf\limits_{\bm{\lambda}_k \neq \bm{0}} \sup\limits_{\substack{\bm{v} \neq \bm{0} \\ \bm{C}^j \bm{v} = \bm{0} \ \forall j < k}}
\dfrac{ \bm{\lambda}_k^T\bm{C}^k\bm{v}}{||\bm{v}||_{\bm{X}_u}||\bm{\lambda}_k||_{\bm{X}_{\lambda_k}}} \geq \beta_F^k~.
\end{equation}
However, for the problem that we are considering, Eq.\eqref{eq: discrete inf--sup FOM 3b} can be equivalently expressed as
\begin{equation}
\label{eq: discrete inf--sup FOM 4b}
\forall k \in \{1, \cdots, N_D\} \ \ \exists \beta_F^k>0 : \ \ \inf\limits_{\bm{\lambda}_k \neq \bm{0}} \sup\limits_{\bm{v} \neq \bm{0}}
\dfrac{ \bm{\lambda}_k^T\bm{C}^k\bm{v}}{||\bm{v}||_{\bm{X}_u}||\bm{\lambda}_k||_{\bm{X}_{\lambda_k}}} \geq \beta_F^k~,
\end{equation}
provided that the inhomogeneous Dirichlet boundaries $\{\mytilde{\Gamma}_D^k\}_{k=1}^{N_D}$ are disjoint. Therefore, for each $k \in \{1, \cdots, N_D\}$, the Lagrange multipliers supremizers $\bm{s}_j^{u,\lambda_k}$ --- with $j \in \{1,\cdots,N_\lambda^k\}$ --- are computed by solving the following linear systems:
\begin{equation}
\label{eq: supremizers multipliers}
\bm{X}_u \bm{s}_j^{u,\lambda_k} = \bm{C}_k^T \bm{e}_j \qquad \quad \text{with } j \in \{1,\cdots, N_\lambda^k\}~,
\end{equation}
where $\bm{e}_j \in \mathbb{R}^{N_\lambda^k}$ is the $j$--th canonical basis element. The global set of Lagrange multipliers supremizers is then defined as $\mathcal{S}_h^\lambda := \{\bm{s}_j^{u,\lambda}\}_{j=1}^{N_\lambda} = \bigcup\limits_{k=1}^{N_D} \left(\{\bm{s}_{j'}^{u, \lambda_k}\}_{j'=1}^{N_\lambda^k}\right)$ and we consider the space--reduced velocity subspace $\mathcal{S}_h^{u, \lambda +} = span\left\{\left\{\bm{\phi}_i^u\right\}_{i=1}^{n_u^s}, \{\bm{s}_{j}^{u,\lambda}\}_{j=1}^{N_\lambda}\right\}$. A basis for such a subspace is represented by the columns of the matrix $\bm{\Phi}^{u, \lambda +} = \left[\bm{\phi}_1^u\vert\cdots\vert\bm{\phi}_{n_u^s}^u\vert\bm{s}^{u,\lambda}_1\vert\cdots\vert\bm{s}^{u,\lambda}_{N_\lambda}\right] \in \mathbb{R}^{N_u^s \times (n_u^s+N_\lambda)}$. 

Ultimately, we consider $\mathcal{S}_h^{u, p\lambda +} := span\left\{\{\bm{\phi}_i^u\}_{i=1}^{n_u^s}, \{\bm{s}_j^{u,p}\}_{j=1}^{n_p^s}, \{\bm{s}_{j'}^{u,\lambda}\}_{j'=1}^{N_\lambda}\right\}$ as the space--reduced velocity subspace. An orthonormal basis --- with respect to the norm induced by $\bm{X}_u$ --- for such a subspace is given by
\begin{equation}
\label{eq: supremizers orthonormal basis}
\bm{\Phi}^{u, p\lambda +} = \left[\bm{\phi}_1^u\vert\cdots\vert\bm{\phi}_{n_u^s}^u\vert\bm{\phi}^{u}_{n_u^s+1}\vert\cdots\vert\bm{\phi}^{u}_{n_u^s+n_s^p}\vert\bm{\phi}^{u}_{n_u^s+n_p^s+1}\vert\cdots\vert\bm{\phi}^{u}_{n_u^s+n_p^s+N_\lambda}\right] \quad \in \mathbb{R}^{N_u^s \times \tilde{n}_u^s}~,
\end{equation}
where $\{\bm{\phi}^{u}_{n_u^s+j}\}_{j=1}^{n_p^s}$ and $\{\bm{\phi}^{u}_{n_u^s+n_p^s+j'}\}_{j'=1}^{N_\lambda}$ are computed, respectively, from $\{\bm{s}_j^{u,p}\}_{j=1}^{n_p^s}$ and $\{\bm{s}_{j'}^{u,\lambda}\}_{j'=1}^{N_\lambda}$, applying the Gram-Schmidt algorithm and $\tilde{n}_u^s := n_u^s + n_p^s + N_\lambda$. 
\begin{remark}
For the supremizers enrichment procedure, one could also consider the inf--sup condition in Eq.\eqref{eq: discrete_inf--sup FOM 2a}, instead of the one in Eq.\eqref{eq: discrete_inf--sup FOM 2b}. Under the assumption that the inhomogeneous Dirichlet boundaries are disjoint, this amounts at solving the following linear systems:
\begin{itemize}[leftmargin=20pt]
	\item Pressure supremizers: $\bm{X}_u \bm{s}_j^{u,p} = \bm{B}^T \bm{\phi}_j^p$, with $j \in \{1, \dots, n_p^s\}$;
	\item Lagrange multipliers supremizers: $\begin{bmatrix} \bm{X}_u & \bm{B}^T \\ \bm{B} & \end{bmatrix} \begin{bmatrix} \bm{s}_j^{u, \lambda_k} \\ \bm{q}_j\end{bmatrix} = \begin{bmatrix} \bm{C}_k^T \bm{e}_j \\ \ \end{bmatrix}$,  with $j \in \{1, \dots, N_\lambda^k\}$ and $k \in \{1, \cdots, N_D\}$. Here $\bm{e}_j \in \mathbb{R}^{N_\lambda^k}$ denotes the $j$--th canonical basis vector.
\end{itemize}
However, being $N_\lambda \ll N_p^s$ in common applications, this approach is more computationally expensive. Indeed, $N_D$ saddle point problems featuring the matrix $\bm{B} \in \mathbb{R}^{N_p^s \times N_u^s}$ as constraint matrix have to be solved, instead of a single one, whose constraint is enforced via the ``shorter'' matrix $\bm{C} \in \mathbb{R}^{N_\lambda \times N_u^s}$. 
\end{remark}

When performing space--time model order reduction, the problem at hand preserves a twofold saddle point structure of type 1 (see Eq.\eqref{eq: reduced A}). In this context, the supremizers enrichment procedure of the velocity reduced basis in space alone is not enough to guarantee well--posedness, since dimensionality reduction in time may affect \emph{inf--sup} stability. However, the following \emph{inf--sup} inequalities hold, under the assumption that the matrices $\bm{\Psi}^{u,p}$ and $\bm{\Psi}^{u,\lambda}$ --- defined in Eq.\eqref{eq: temporal bases combined} --- are full rank.
\begin{lmm}
	\label{lmm: lemma 1}
	Let the velocity reduced basis in space be enriched with pressure supremizers $\mathcal{S}_h^p$. If the columns of the matrix $\bm{\Psi}^{u,p} = \left(\bm{\Psi}^u\right)^T \bm{\Psi}^{p}$ are linearly independent, then 
	\begin{equation}
	\label{eq: space--time inf--sup stability pressure}
	\exists \ \beta_{STR}^p > 0 \quad \text{such that} \quad 
	\inf\limits_{\widehat{\bm{q}} \neq \bm{0}} \ \sup\limits_{\substack{\widehat{\bm{v}} \neq \bm{0} \\ \widehat{\bm{A}}_7^{st}\widehat{\bm{v}} = \bm{0}}}
	\dfrac{\widehat{\bm{q}}^T\widehat{\bm{A}}^{st}_4\widehat{\bm{v}}}{\vert\vert \widehat{\bm{q}} \vert\vert_2 \vert\vert \widehat{\bm{v}} \vert\vert_2} \geq \beta_{STR}^p~.
	\end{equation}
\end{lmm}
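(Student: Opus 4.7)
The plan is to exploit the Kronecker-product structure of the reduced blocks $\widehat{\bm{A}}_4^{st}$ and $\widehat{\bm{A}}_7^{st}$, which will let me decouple the inf-sup ratio into a purely spatial factor (controlled by the supremizers enrichment) and a purely temporal factor (controlled by the rank of $\bm{\Psi}^{u,p}$).

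First, I would reshape vectors into spatial$\times$temporal matrices through the bijections $\mathcal{F}_u, \mathcal{F}_p$: $\hat{\bm{v}} \leftrightarrow \hat{V} \in \mathbb{R}^{n_u^s \times n_u^t}$ and $\hat{\bm{q}} \leftrightarrow \hat{Q} \in \mathbb{R}^{n_p^s \times n_p^t}$. From the entrywise expressions in Eq.\eqref{eq: reduced blocks A}, the bilinear form $\hat{\bm{q}}^T \widehat{\bm{A}}_4^{st}\hat{\bm{v}}$ becomes a trace involving $\widehat{\bm{B}}$ and $\bm{\Psi}^{u,p}$, the constraint $\widehat{\bm{A}}_7^{st}\hat{\bm{v}} = \bm{0}$ becomes a matrix equation involving $\widehat{\bm{C}}\hat{V}$ and $\bm{\Psi}^{u,\lambda}$, and by the orthonormality properties of Subsection \ref{subs: definition of the norms} the Euclidean norms of $\hat{\bm{v}},\hat{\bm{q}}$ coincide with the Frobenius norms of $\hat{V},\hat{Q}$. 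The pressure inf-sup then reads, up to a transpose convention, as a lower bound for $\mathrm{tr}\bigl((\bm{\Psi}^{u,p})^T\hat{Q}^T\widehat{\bm{B}}\hat{V}\bigr) / (\|\hat{V}\|_F\|\hat{Q}\|_F)$.

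Second, I would construct an explicit witness $\hat{V}$ for the supremum by separating variables. For each column of $\hat{Q}$, the spatial reduced pressure inf-sup in Eq.\eqref{eq: discrete inf--sup ROM}, guaranteed by the supremizer enrichment of Subsection \ref{subs: ST--GRB supremizers}, yields a spatial vector in $\ker(\widehat{\bm{C}})$ realizing the ratio with constant at least $\beta_R$. Assembling these into $W \in \mathbb{R}^{n_u^s \times n_p^t}$, I set $\hat{V} = W M$ with $M \in \mathbb{R}^{n_p^t \times n_u^t}$ chosen so that the temporal contraction against $\bm{\Psi}^{u,p}$ collapses to an identity-like factor. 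The full column-rank hypothesis on $\bm{\Psi}^{u,p}$ makes the choice $M = \bigl((\bm{\Psi}^{u,p})^T\bm{\Psi}^{u,p}\bigr)^{-1}(\bm{\Psi}^{u,p})^T$ well defined and introduces the controlling factor $\sigma_{\min}(\bm{\Psi}^{u,p}) > 0$ in the bound on $\|\hat{V}\|_F$. Admissibility follows because $\widehat{\bm{C}} W = \bm{0}$ implies $\widehat{\bm{C}}\hat{V} = \bm{0}$, so the Kronecker-structured constraint $\widehat{\bm{A}}_7^{st}\hat{\bm{v}} = \bm{0}$ is satisfied regardless of $\bm{\Psi}^{u,\lambda}$. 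Combining the numerator lower bound $\geq \beta_R\|\hat{Q}\|_F\|W\|_F$ with $\|\hat{V}\|_F \leq \|W\|_F/\sigma_{\min}(\bm{\Psi}^{u,p})$ produces $\beta_{STR}^p \geq \beta_R\,\sigma_{\min}(\bm{\Psi}^{u,p}) > 0$.

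The main obstacle I anticipate is the interaction between the Gram--Schmidt step in Eq.\eqref{eq: supremizers orthonormal basis} and the kernel property $\bm{C}\bm{s}_j^{u,p} = \bm{0}$ coming from Eq.\eqref{eq: supremizers pressure}: after orthogonalization against the original POD modes $\{\bm{\phi}_i^u\}$, the enriched modes no longer lie in $\ker(\bm{C})$ pointwise, so the claim that the reduced spatial inf-sup is realized by a kernel-based witness has to be argued carefully. I would resolve this either by working with the unorthonormalized enrichment and transferring the bound through the invertible change of basis, or by invoking directly the stability constant $\beta_R$ of Eq.\eqref{eq: discrete inf--sup ROM}, which already builds in the constraint $\widehat{\bm{C}}\hat{\bm{v}} = \bm{0}$. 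Once this point is settled, the remaining Kronecker-indexing bookkeeping is mechanical.
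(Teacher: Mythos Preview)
Your proposal is correct and in fact goes beyond the paper's own argument. The paper's proof restricts to rank-one test vectors $\hat{\bm{q}}=\mathit{vec}_p(\hat{\bm{q}}_s\otimes\hat{\bm{q}}_t)$ and rank-one witnesses $\hat{\bm{v}}=\mathit{vec}_u(\hat{\bm{s}}_s\otimes\hat{\bm{v}}_t)$; for these the quotient factors into a spatial ratio (bounded below by the supremizer constant) times a temporal ratio (bounded via the full-column-rank hypothesis on $\bm{\Psi}^{u,p}$). Your route instead treats a \emph{general} $\hat{\bm{q}}$ directly: you build the spatial witness $W$ column-by-column from the constrained reduced inf--sup in Eq.~\eqref{eq: discrete inf--sup ROM}, and then post-multiply by the Moore--Penrose map $M=\bigl((\bm{\Psi}^{u,p})^T\bm{\Psi}^{u,p}\bigr)^{-1}(\bm{\Psi}^{u,p})^T$ to collapse the temporal contraction to the identity. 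With the natural normalization $\|w_j\|_2=\|q_j\|_2$ the numerator bound you state is exact, and you obtain the explicit constant $\beta_{STR}^p\geq\beta_R\,\sigma_{\min}(\bm{\Psi}^{u,p})$. This buys you two things the paper does not: coverage of all $\hat{\bm{q}}$ (the paper never argues why the rank-one case suffices) and a quantitative inf--sup constant. The obstacle you flag about Gram--Schmidt destroying the pointwise kernel property $\bm{C}\bm{s}_j^{u,p}=\bm{0}$ is shared with the paper; both arguments resolve it exactly as in your second fix, by invoking the reduced spatial inf--sup of Eq.~\eqref{eq: discrete inf--sup ROM} with the constraint $\widehat{\bm{C}}\hat{\bm{v}}=\bm{0}$ already built in, rather than tracking individual basis vectors.
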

\begin{proof}
	To satisfy Eq.\eqref{eq: space--time inf--sup stability pressure}, we need that $\exists \ \beta_{STR}^p > 0$ such that:
	\begin{equation*}
	\forall \widehat{\bm{q}} \neq \bm{0} \ \ \exists \widehat{\bm{v}} \neq \bm{0} \quad \text{such that} \quad \dfrac{\widehat{\bm{q}}^T\widehat{\bm{A}}^{st}_4\widehat{\bm{v}}}{\vert\vert \widehat{\bm{q}} \vert\vert_2 \vert\vert \widehat{\bm{v}} \vert\vert_2 } \geq \beta_{STR}^p \quad \text{ and } \widehat{\bm{A}}_7^{st}\widehat{\bm{v}} = \bm{0}~,
	\end{equation*}
	where $\widehat{\bm{A}}^{st}_4 \in \mathbb{R}^{n_p^{st} \times n_u^{st}}$ and $\widehat{\bm{A}}^{st}_7 \in \mathbb{R}^{n_\lambda^{st} \times n_u^{st}}$ are defined as in Eq.~\eqref{eq: ROM spacetime blocks}. Let $\mathit{vec}_u: \mathbb{R}^{n_u^s \times n_u^t} \to \mathbb{R}^{n_u^s n_u^t}$ and $\mathit{vec}_p: \mathbb{R}^{n_p^s \times n_p^t} \to \mathbb{R}^{n_p^s n_p^t}$ be the vectorizing operators for velocity and pressure, respectively. Given $\widehat{\bm{q}} = \mathit{vec}_p(\widehat{\bm{q}}_s \otimes \widehat{\bm{q}}_t) \in \mathbb{R}^{n_p^s n_p^t}$ and $\widehat{\bm{v}} = \mathit{vec}_u(\widehat{\bm{v}}_s \otimes \widehat{\bm{v}}_t) \in \mathbb{R}^{n_u^s n_u^t}$, we have that
	\begin{equation}
	\label{eq: space--time bilinear form A4}
	\widehat{\bm{q}}^T\widehat{\bm{A}}_4^{st}\widehat{\bm{v}} = \sum\limits_{j=1}^{N^t} \left(\widehat{\bm{q}}_s \left(\widehat{\bm{q}}_t\right)_j\right)^T \widehat{\bm{B}} \left(\widehat{\bm{v}}_s \left(\widehat{\bm{v}}_t\right)_j\right) = \sum\limits_{j=1}^{N^t} \left(\widehat{\bm{q}}_t\right)_j \left(\widehat{\bm{v}}_t\right)_j \left(\widehat{\bm{q}}_s^T \widehat{\bm{B}} \widehat{\bm{v}}_s \right) = \left(\widehat{\bm{q}}_s^T \widehat{\bm{B}} \widehat{\bm{v}}_s \right) \left(\widehat{\bm{q}}_t, \widehat{\bm{v}}_t\right)_2~,
	\end{equation}
	where $(\widehat{\bm{q}}_t)_j$, $(\widehat{\bm{v}}_t)_j$ denote the $j$--th entry of $\widehat{\bm{q}}_t$, $\widehat{\bm{v}}_t$, respectively. Let us define $\widehat{\bm{v}} := \mathit{vec}_u(\widehat{\bm{s}}_s \otimes \widehat{\bm{v}}_t)$, where $\widehat{\bm{s}}_s$ is such that $\widehat{\bm{C}} \widehat{\bm{s}}_s = \bm{0}$ and $\widehat{\bm{q}}_s^T \widehat{\bm{B}} \widehat{\bm{s}}_s \geq \beta_{R}^p \vert\vert \widehat{\bm{q}}_s \vert\vert_2 \vert\vert \widehat{\bm{s}}_s \vert\vert_2$, with $\beta_{R}^p > 0$. We have guarantee that $\widehat{\bm{s}}_s$ exists, thanks to the supremizers enrichment procedure in space (see Eq.\eqref{eq: supremizers pressure}). Firstly, notice that $\widehat{\bm{A}}_7^{st}\widehat{\bm{v}} = \bm{0}$; indeed, for $\ell = \mathcal{F}_\lambda(\ell_s, \ell_t)$, $\left(\widehat{\bm{A}}_7^{st}\widehat{\bm{v}}\right)_\ell = \left(\widehat{\bm{C}} \widehat{\bm{s}}_s\right)_{\ell_s} \left(\left(\bm{\Psi}^{u,\lambda}\right)^T \widehat{\bm{v}}_t\right)_{\ell_t} = \bm{0}$, since $\widehat{\bm{C}} \widehat{\bm{s}}_s = \bm{0}$. So, the additional constraint appearing in the supremum of Eq.\eqref{eq: space--time inf--sup stability pressure} is trivially satisfied. Then, considering Eq.\eqref{eq: space--time bilinear form A4}, we have that
	\begin{equation*}
	\widehat{\bm{q}}^T \widehat{\bm{A}}_4^{st} \widehat{\bm{v}} = \left(\widehat{\bm{q}}_s^T \widehat{\bm{B}} \widehat{\bm{s}}_s \right) \left(\widehat{\bm{q}}_t, \widehat{\bm{v}}_t\right)_2 \geq \beta_{RB}^p \vert\vert\widehat{\bm{q}}_s\vert\vert_2 \vert\vert\widehat{\bm{s}}_s\vert\vert_2\left(\widehat{\bm{q}}_t, \widehat{\bm{v}}_t\right)_2~.
	\end{equation*}
	Therefore, given that $||\widehat{\bm{q}}||_2 = ||\widehat{\bm{q}}_s||_2 \ ||\widehat{\bm{q}}_t||_2$ and $||\widehat{\bm{v}}||_2 = ||\widehat{\bm{s}}_s||_2 \ ||\widehat{\bm{v}}_t||_2$ (see Eq.\eqref{eq: space--time inner product factorization}), we have that:
	\begin{equation*}
	\dfrac{\widehat{\bm{q}}^T\widehat{\bm{A}}^{st}_4 \widehat{\bm{v}}}{\vert\vert \widehat{\bm{q}} \vert\vert_2 \vert\vert \widehat{\bm{v}} \vert\vert_2} \geq \beta_{RB}^p \dfrac{\left(\widehat{\bm{q}}_t, \widehat{\bm{v}}_t\right)_2}{\vert\vert\widehat{\bm{q}}_t\vert\vert_2 \vert\vert\widehat{\bm{v}}_t\vert\vert_2}~.
	\end{equation*}
	Hence, to conclude, Eq.\eqref{eq: space--time inf--sup stability pressure} holds if $\exists \beta_t^p > 0$ such that
	\begin{equation}
	\label{eq: temporal inf--sup inequality}
	\forall \widehat{\bm{q}}_t \neq \bm{0} \ \ \exists \widehat{\bm{v}}_t \neq \bm{0} \quad \text{such that} \quad \dfrac{\left(\widehat{\bm{q}}_t, \widehat{\bm{v}}_t\right)_2}{\vert\vert\widehat{\bm{q}}_t\vert\vert_2 \vert\vert\widehat{\bm{v}}_t\vert\vert_2} \geq \beta_t^p~.
	\end{equation}
	This represents an \emph{inf--sup} condition on the temporal reduced subspaces with respect to the Euclidean norm. Based on the definition of the matrix $\bm{\Psi}^{u,p}$ in Eq.\eqref{eq: temporal bases combined}, Eq.\eqref{eq: temporal inf--sup inequality} is equivalent to the linear independence of the columns of $\bm{\Psi}^{u,p}$.
\end{proof}
\begin{lmm}
	\label{lmm: lemma 2}
	Let $k \in \{1, \cdots, N_D\}$. Let the velocity reduced basis in space be enriched with Lagrange multipliers supremizers $\mathcal{S}_h^{\lambda_k}$. If the columns of the matrix $\bm{\Psi}^{u,\lambda_k} = \left(\bm{\Psi}^u\right)^T \bm{\Psi}^{\lambda_k}$ are linearly independent, then 
	\begin{equation}
	\label{eq: space--time inf--sup stability lagrange multipliers}
	\exists \ \beta_{STR}^{\lambda_k} > 0 \quad \text{such that} \quad
	\inf\limits_{\widehat{\bm{\lambda}}_k \neq \bm{0}} \ \sup\limits_{\widehat{\bm{v}} \neq \bm{0}}
	\dfrac{\widehat{\bm{\lambda}}_k^T\left(\widehat{\bm{A}}^{st}_7\right)^k \widehat{\bm{v}}}{\vert\vert \widehat{\bm{\lambda}}_k \vert\vert_2 \vert\vert \widehat{\bm{v}} \vert\vert_2 }\geq \beta_{STR}^{\lambda_k}~,
	\end{equation}
	where $\left(\widehat{\bm{A}}^{st}_7\right)^k \in \mathbb{R}^{n^{\lambda_k}_{st} \times n^u_{st}}$ is the $k$--th block of $\widehat{\bm{A}}^{st}_7$ along its first dimension.
\end{lmm}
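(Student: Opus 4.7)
The plan is to mimic the proof of Lemma \ref{lmm: lemma 1} almost verbatim, with two simplifications: there is no additional side constraint on $\hat{\bm{v}}$ (contrast Eq.\eqref{eq: space--time inf--sup stability lagrange multipliers} with Eq.\eqref{eq: space--time inf--sup stability pressure}), and the spatial supremizer we need already exists by the enrichment in Eq.\eqref{eq: supremizers multipliers}, which guarantees a constant $\beta_R^{\lambda_k}>0$ such that for every $\hat{\bm{\lambda}}_k\neq\bm{0}$ there is $\hat{\bm{s}}_s\neq\bm{0}$ with $\hat{\bm{\lambda}}_k^T\widehat{\bm{C}}^k\hat{\bm{s}}_s\ \geq\ \beta_R^{\lambda_k}\,\|\hat{\bm{\lambda}}_k\|_2\,\|\hat{\bm{s}}_s\|_2$ (here $\widehat{\bm{C}}^k=\bm{C}^k\bm{\Phi}^{u,p\lambda+}$, and the identity $\|\bm{\Phi}^{u,p\lambda+}\hat{\bm{s}}_s\|_{\bm{X}_u}=\|\hat{\bm{s}}_s\|_2$ comes from the $\bm{X}_u$-orthonormality of the enriched basis, see Subsection \ref{subs: definition of the norms}).

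First I would exploit the block-diagonal structure of $\widehat{\bm{A}}^{st}_7$ and the tensor-product structure of the space--time basis elements in order to factorize the bilinear form. Take $\hat{\bm{\lambda}}_k = \mathit{vec}(\hat{\bm{\lambda}}_{k,s}\otimes\hat{\bm{\lambda}}_{k,t})$ and $\hat{\bm{v}} = \mathit{vec}_u(\hat{\bm{v}}_s\otimes\hat{\bm{v}}_t)$. Exactly as in the derivation of Eq.\eqref{eq: space--time bilinear form A4}, and using the definition of $\widehat{\bm{A}}_7^{st}$ in Eq.\eqref{eq: reduced blocks A}, one obtains
\begin{equation*}
\hat{\bm{\lambda}}_k^T\bigl(\widehat{\bm{A}}^{st}_7\bigr)^k\hat{\bm{v}}\ =\ \bigl(\hat{\bm{\lambda}}_{k,s}^T\widehat{\bm{C}}^k\hat{\bm{v}}_s\bigr)\,\bigl(\bm{\Psi}^{u,\lambda_k}\hat{\bm{v}}_t,\hat{\bm{\lambda}}_{k,t}\bigr)_2,
\end{equation*}
where I also use $\bm{\Psi}^{u,\lambda_k} := (\bm{\Psi}^u)^T\bm{\Psi}^{\lambda_k}$ and the fact that the $k$--th block of $\bm{\Psi}^{u,\lambda}$ is precisely $\bm{\Psi}^{u,\lambda_k}$ (cf. the block-diagonal definition of $\bm{\Psi}^\lambda$).

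Next I would choose the test function. Given $\hat{\bm{\lambda}}_k\neq\bm{0}$, pick $\hat{\bm{s}}_s$ attaining the spatial supremizer bound above and choose $\hat{\bm{v}}_t$ as any unit vector realizing the temporal inf-sup on $\bm{\Psi}^{u,\lambda_k}$, namely such that $(\bm{\Psi}^{u,\lambda_k}\hat{\bm{v}}_t,\hat{\bm{\lambda}}_{k,t})_2\ \geq\ \beta_t^{\lambda_k}\,\|\hat{\bm{v}}_t\|_2\,\|\hat{\bm{\lambda}}_{k,t}\|_2$ for some $\beta_t^{\lambda_k}>0$. Such a $\beta_t^{\lambda_k}$ exists if and only if the columns of $\bm{\Psi}^{u,\lambda_k}$ are linearly independent, which is the standing hypothesis. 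Setting $\hat{\bm{v}} := \mathit{vec}_u(\hat{\bm{s}}_s\otimes\hat{\bm{v}}_t)$, using the tensor factorization of the Euclidean norm in Eq.\eqref{eq: space--time inner product factorization}, and combining the two lower bounds yields
\begin{equation*}
\frac{\hat{\bm{\lambda}}_k^T\bigl(\widehat{\bm{A}}^{st}_7\bigr)^k\hat{\bm{v}}}{\|\hat{\bm{\lambda}}_k\|_2\,\|\hat{\bm{v}}\|_2}\ \geq\ \beta_R^{\lambda_k}\,\beta_t^{\lambda_k} =: \beta_{STR}^{\lambda_k} > 0,
\end{equation*}
which, after taking the infimum in $\hat{\bm{\lambda}}_k$ and the supremum in $\hat{\bm{v}}$, is exactly Eq.\eqref{eq: space--time inf--sup stability lagrange multipliers}. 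Since no analogue of the side constraint $\widehat{\bm{A}}_7^{st}\hat{\bm{v}}=\bm{0}$ appears in this lemma, no extra verification (like the one needed in Lemma \ref{lmm: lemma 1}) is required, and the argument is actually strictly simpler. The main obstacle, as in Lemma \ref{lmm: lemma 1}, is the temporal inf-sup: asserting $\beta_t^{\lambda_k}>0$ crucially relies on the full-rank hypothesis on $\bm{\Psi}^{u,\lambda_k}$, because that matrix is in general rectangular and $n_u^t$ need not equal $n_{\lambda_k}^t$; linear independence of its columns is however precisely what is needed for the required temporal supremum to be strictly positive in the Euclidean norm.
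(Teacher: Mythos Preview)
Your proposal is correct and follows exactly the approach the paper intends: it adapts the proof of Lemma~\ref{lmm: lemma 1} step by step, correctly noting that the absence of a side constraint on $\hat{\bm v}$ makes the argument strictly simpler, and reducing the space--time inf--sup to the product of the spatial supremizer bound (available after the enrichment in Eq.~\eqref{eq: supremizers multipliers}) and a temporal inf--sup equivalent to full column rank of $\bm{\Psi}^{u,\lambda_k}$. One small slip: in your factorized temporal inner product you wrote $(\bm{\Psi}^{u,\lambda_k}\hat{\bm v}_t,\hat{\bm\lambda}_{k,t})_2$, but since $\bm{\Psi}^{u,\lambda_k}\in\mathbb{R}^{n_u^t\times n_{\lambda_k}^t}$ the matrix must act on $\hat{\bm\lambda}_{k,t}$, i.e.\ the correct expression is $(\bm{\Psi}^{u,\lambda_k}\hat{\bm\lambda}_{k,t},\hat{\bm v}_t)_2=\hat{\bm v}_t^T\bm{\Psi}^{u,\lambda_k}\hat{\bm\lambda}_{k,t}$; your subsequent reasoning about column independence is already consistent with this corrected placement.
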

\begin{proof}
	The proof proceeds as the one of Lemma \ref{lmm: lemma 1}.
\end{proof}

Based on Lemmas \ref{lmm: lemma 1} -- \ref{lmm: lemma 2}, the following theorem holds.
\begin{theorem}
	Let the velocity reduced basis in space be enriched with pressure supremizers $\mathcal{S}_h^p$ and Lagrange multipliers supremizers $\mathcal{S}_h^\lambda$, computed by solving Eqs.\eqref{eq: supremizers pressure}--\eqref{eq: supremizers multipliers}, respectively. Assume that the space--reduced problem is well--posed upon the supremizers enrichment procedure. If the columns of the matrices $\bm{\Psi}^{u,p}$, $\bm{\Psi}^{u, \lambda_k}$ $\forall k \in \{1, \cdots, N_D\}$ are linearly independent, then the space--time--reduced problem arising from a Galerkin projection (i.e. Problem \ref{pb:compact_space_time_reduced_system} with $\mytilde{\bm{\Pi}} = \bm{\Pi}$) is well--posed.
\end{theorem}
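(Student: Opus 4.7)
The plan is to apply Theorem 3.1 in \cite{howell2011inf} to the space--time reduced matrix $\widehat{\bm{A}}^{st}$ defined in Eq.\eqref{eq: reduced A}, which --- as pointed out after Eq.\eqref{eq: monolitic_FOM_system} --- inherits a twofold saddle point structure of type 1 from the FOM matrix $\bm{A}^{st}$. By that theorem, well--posedness is equivalent to the ST--reduced analogue of Eq.\eqref{eq: discrete_inf--sup FOM 2a}, i.e. a pressure \emph{inf--sup} condition with the supremum restricted to the reduced weakly--divergence--free subspace $\{\hat{\bm{v}} : \widehat{\bm{A}}_7^{st}\hat{\bm{v}} = \bm{0}\}$, plus an unconstrained \emph{inf--sup} condition for the whole Lagrange multipliers dual field. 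The Euclidean norm appears in both inequalities, consistently with Eq.\eqref{eq: reduced norms space--time} and the orthonormality properties of the ST reduced bases.

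The first of these inequalities is exactly the conclusion of Lemma \ref{lmm: lemma 1}, whose hypotheses --- pressure supremizers enrichment $\mathcal{S}_h^p$ and linear independence of the columns of $\bm{\Psi}^{u,p}$ --- are granted by the assumptions of the theorem. So nothing extra is required here.

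For the second inequality, I would leverage Lemma \ref{lmm: lemma 2}, which supplies, for each $k \in \{1,\cdots,N_D\}$, a per--block \emph{inf--sup} constant $\beta_{STR}^{\lambda_k} > 0$. The passage from these per--block inequalities to a global \emph{inf--sup} on the stacked vector $\hat{\bm{\lambda}} = [\hat{\bm{\lambda}}_1^T,\cdots,\hat{\bm{\lambda}}_{N_D}^T]^T$ is the technical crux of the argument. I would exploit the block decomposition $\widehat{\bm{A}}_7^{st}\hat{\bm{v}} = [((\widehat{\bm{A}}_7^{st})^1\hat{\bm{v}})^T,\cdots,((\widehat{\bm{A}}_7^{st})^{N_D}\hat{\bm{v}})^T]^T$ together with the disjointness of the boundaries $\{\mytilde{\Gamma}_D^k\}_{k=1}^{N_D}$, which makes the spatial supremizers $\bm{s}_j^{u,\lambda_k}$ from Eq.\eqref{eq: supremizers multipliers} act on disjoint sets of boundary DOFs. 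Concretely: for a given $\hat{\bm{\lambda}}$, pick the index $k^\star$ maximizing $\vert\vert\hat{\bm{\lambda}}_k\vert\vert_2$, apply Lemma \ref{lmm: lemma 2} to produce a tensor--product test field $\hat{\bm{v}}^\star$ with $\hat{\bm{\lambda}}_{k^\star}^T(\widehat{\bm{A}}_7^{st})^{k^\star}\hat{\bm{v}}^\star \geq \beta_{STR}^{\lambda_{k^\star}} \vert\vert\hat{\bm{\lambda}}_{k^\star}\vert\vert_2 \vert\vert\hat{\bm{v}}^\star\vert\vert_2$, and use the disjoint--boundary support of the underlying spatial supremizer to show that the cross terms $\hat{\bm{\lambda}}_k^T(\widehat{\bm{A}}_7^{st})^{k}\hat{\bm{v}}^\star$ for $k \neq k^\star$ vanish. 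Combining with the elementary estimate $\vert\vert\hat{\bm{\lambda}}_{k^\star}\vert\vert_2 \geq \vert\vert\hat{\bm{\lambda}}\vert\vert_2/\sqrt{N_D}$ yields the global \emph{inf--sup} constant $\beta_{STR}^\lambda \geq \min_k \beta_{STR}^{\lambda_k}/\sqrt{N_D} > 0$.

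The remaining ingredient for Brezzi--type well--posedness, namely the non--singularity of the (1,1) block $\widehat{\bm{A}}_1^{st}$ on the kernel of the constraints, follows from inspection of Eq.\eqref{eq: reduced A1}: $\widehat{\bm{A}}_1^{st}$ is block lower triangular in time with diagonal blocks $\widehat{\bm{M}} + \frac{2}{3}\delta \widehat{\bm{A}}$, which are symmetric positive definite because $\bm{M}$ and $\bm{A}$ are SPD and the enriched spatial basis $\bm{\Phi}^{u,p\lambda+}$ in Eq.\eqref{eq: supremizers orthonormal basis} has full column rank; the assumed well--posedness of the space--reduced problem then guarantees that the reduction does not degrade the relevant constants uniformly. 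The main obstacle in the whole argument is the combination step for the Lagrange multipliers, since naively summing contributions from different $k$ could destroy the lower bound; the disjoint--boundary geometric assumption is what makes this step go through cleanly, and everything else is essentially bookkeeping around Lemmas \ref{lmm: lemma 1}--\ref{lmm: lemma 2}.
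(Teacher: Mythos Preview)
Your overall strategy matches the paper's one-sentence proof, which simply states that the result follows from Lemmas~\ref{lmm: lemma 1}--\ref{lmm: lemma 2} together with the assumed well-posedness of the space-reduced problem; you go considerably further by spelling out the combination step that the paper leaves implicit. Two small labeling slips first: the pair of inequalities you verify (pressure inf--sup constrained by $\widehat{\bm{A}}_7^{st}\hat{\bm{v}}=\bm{0}$, Lagrange multipliers unconstrained) is the reduced analogue of Eq.~\eqref{eq: discrete_inf--sup FOM 2b}, not Eq.~\eqref{eq: discrete_inf--sup FOM 2a}; and the set $\{\hat{\bm{v}}:\widehat{\bm{A}}_7^{st}\hat{\bm{v}}=\bm{0}\}$ is not the weakly divergence-free subspace (that would be $\widehat{\bm{A}}_4^{st}\hat{\bm{v}}=\bm{0}$) but the subspace with vanishing reduced boundary coupling. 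The constraint you actually write is nonetheless the correct one for invoking Lemma~\ref{lmm: lemma 1}.

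The substantive gap is in your cross-term argument for the global Lagrange-multiplier inf--sup. You claim that for $k\neq k^\star$ the terms $\hat{\bm{\lambda}}_k^T(\widehat{\bm{A}}_7^{st})^k\hat{\bm{v}}^\star$ vanish because the spatial supremizer has ``disjoint-boundary support''. But the supremizers in Eq.~\eqref{eq: supremizers multipliers} solve $\bm{X}_u\bm{s}_j^{u,\lambda_{k^\star}}=(\bm{C}^{k^\star})^T\bm{e}_j$ over the whole of $\Omega$; they are global velocity fields, not compactly supported near $\mytilde{\Gamma}_D^{k^\star}$, so their traces on the other pieces $\mytilde{\Gamma}_D^k$ are in general nonzero and $\bm{C}^k\bm{s}_j^{u,\lambda_{k^\star}}$ need not vanish. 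Disjointness of the $\{\mytilde{\Gamma}_D^k\}$ alone does not kill these cross terms. A route closer to the paper's intent is to bypass the per-block combination entirely: the assumption that the space-reduced problem is well-posed already supplies a \emph{global} spatial inf--sup for $\widehat{\bm{C}}$ (the second inequality in Eq.~\eqref{eq: discrete inf--sup ROM}), and one can then rerun the tensor-product argument of Lemma~\ref{lmm: lemma 2} directly for the stacked multiplier without ever splitting into blocks. The paper does not make this explicit either, and some care is still needed because $\bm{\Psi}^\lambda$ is block-diagonal (so a generic $\hat{\bm{\lambda}}$ is not a single space--time rank-one tensor); but this is precisely where the assumed spatial well-posedness is meant to do the work, rather than a geometric support argument.
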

\begin{proof}
	The proof trivially follows from Lemmas \ref{lmm: lemma 1} -- \ref{lmm: lemma 2}, leveraging the assumption on the well--posedness of the space--reduced problem upon the supremizers enrichment procedure in space.
\end{proof}

Since the temporal reduced bases for velocity, pressure and Lagrange multipliers have been derived independently, we do not have any \emph{a priori} guarantee that the matrices $\bm{\Psi}^{u,p}$ and $\{\bm{\Psi}^{u, \lambda_k}\}_{k=1}^{N_D}$ are indeed full column rank. In order for the \emph{inf--sup} inequalities in Eqs.\eqref{eq: space--time inf--sup stability pressure}-\eqref{eq: space--time inf--sup stability lagrange multipliers} to be satisfied, it is necessary to enrich also the temporal reduced basis of the velocity.

\begin{algorithm}[t]
	\caption{Velocity temporal reduced basis enrichment}
	\label{alg: pressure time supremizers}
	\begin{algorithmic}[1]
		\Function {TemporalStabilizers}{$\bm{\Psi}^u$, $\bm{\Psi}^d$, $\varepsilon^t$}  \Comment{$\bm{\Psi}^d$ is the dual temporal basis matrix} \label{line 1}
		\State{Compute $\bm{\Psi}^{u,d} = \left(\bm{\Psi}^u\right)^T\bm{\Psi}^d = \left[\bm{\xi}_1\vert\cdots\vert\bm{\xi}_{n_d^t}\right]$} \Comment{$n_d^t$ is the number of dual temporal bases}
		\For{$\ell \in \{1,\cdots,n_d^t\}$}
		\If {$\ell =1$}
		\State $\bm{\pi}_{\bm{\xi}} \gets \bm{0}$
		\Else
		\State $\bm{\pi}_{\bm{\xi}} \gets \sum\limits_{j=1}^{\ell-1} \dfrac{(\bm{\xi}_\ell, \bm{\xi}_j)_2}{(\bm{\xi}_j, \bm{\xi}_j)_2} \bm{\xi}_j$  \Comment{Compute the projection of $\bm{\xi}_\ell$ onto $span\{\bm{\xi}_j\}_{j=1}^{\ell-1}$} \label{line 7}
		\EndIf
		\If{$\vert\vert \bm{\xi}_\ell - \bm{\pi}_{\bm{\xi}} \vert\vert_2 \leq \varepsilon^t$} \Comment{Check enrichment condition} \label{line 8}
		\State $\bm{\psi}^\star \gets \bm{\Psi}^d_{:,\ell}$
		\State $\bm{\psi}^+ = \Big(\bm{\psi}^\star - \sum\limits_{j=1}^{n_u^t} (\bm{\psi}^\star, \bm{\psi}_j^u)_2 \bm{\psi}_j^u\Big) \Big/ \Big\vert\Big\vert\bm{\psi}^\star - \sum\limits_{j=1}^{n_u^t} (\bm{\psi}^\star, \bm{\psi}_j^u)_2 \bm{\psi}_j^u\Big\vert\Big\vert_2$  \label{line 10}
		\State $\bm{\Psi}^u \gets [\bm{\Psi}^u \vert \bm{\psi}^+]$  \Comment{Enrich velocity temporal reduced basis} \label{line 11}
		\State $n_u^t \gets n_u^t + 1$
		\State $\bm{\Psi}^{u,d} = \left[\bm{\xi}_1\vert\cdots\vert\bm{\xi}_{n_p^t}\right] \gets \left[\left(\bm{\Psi}^{u,d}\right)^T \Big\vert \ \left(\left(\bm{\psi}^+\right)^T \bm{\Psi}^d\right)^T\right]^T$  \Comment{Update $\bm{\Psi}^{u,d}$} \label{line 12}
		\State $\ell \gets 1$
		\Else
		\State $\bm{\xi}_\ell \leftarrow \bm{\xi}_\ell - \bm{\pi}_{\bm{\xi}}$
		\EndIf
		\EndFor
		\EndFunction
	\end{algorithmic}
\end{algorithm}

\begin{corollary}
	\label{cor: corollary 1}
	Let the velocity reduced basis in space be enriched with pressure supremizers $\mathcal{S}_h^p$. Assume that the columns of $\bm{\Psi}^{u,p}$ are linearly dependent. If the velocity temporal reduced basis $\bm{\Psi}^u$ is enriched according to Algorithm \ref{alg: pressure time supremizers}, setting $\bm{\Psi}^d = \bm{\Psi}^p$ and fixing $\varepsilon^t \geq 0$, then the inf--sup inequality in Eq.\eqref{eq: space--time inf--sup stability pressure} is satisfied.
\end{corollary}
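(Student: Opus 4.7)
The plan is to reduce the statement to Lemma \ref{lmm: lemma 1}: that lemma already yields inequality (\ref{eq: space--time inf--sup stability pressure}) as soon as the columns of $\bm{\Psi}^{u,p} = (\bm{\Psi}^u)^T \bm{\Psi}^p$ are linearly independent, so it suffices to prove that Algorithm \ref{alg: pressure time supremizers} terminates in a configuration where the updated matrix $\bm{\Psi}^{u,p}$ has full column rank.

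I would first establish termination. Each execution of the enrichment branch (lines 10--14) appends to $\bm{\Psi}^u$ the vector $\bm{\psi}^+$ obtained by Gram--Schmidt of $\bm{\psi}^\star = \bm{\Psi}^p_{:,\ell}$ against the current $\{\bm{\psi}_j^u\}$; by construction $\bm{\psi}^+$ is orthogonal in $\ell^2$ to all existing columns of $\bm{\Psi}^u$ and has unit norm. The enlarged $\bm{\Psi}^u$ therefore remains orthonormal, which forces $n_u^t \leq N^t$, so only finitely many enrichments can occur.

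I would then extract the invariant enforced by lines 7 and 17: these lines implement a classical Gram--Schmidt sweep on the columns of $\bm{\Psi}^{u,p}$, replacing each non--flagged $\bm{\xi}_\ell$ by its component orthogonal to all previously processed columns. Upon termination every column $\bm{\xi}_\ell$ has passed the test $\lVert \bm{\xi}_\ell - \bm{\pi}_{\bm{\xi}} \rVert_2 > \epsilon_t \geq 0$, so no column lies in the span of its predecessors, and the final $\bm{\Psi}^{u,p}$ has linearly independent columns. Lemma \ref{lmm: lemma 1} then concludes.

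The main obstacle I anticipate is verifying that the normalization on line 10 is always well defined, i.e.\ that $\bm{\psi}^\star = \bm{\Psi}^p_{:,\ell}$ does not already lie in $\mathrm{col}(\bm{\Psi}^u)$ at the moment enrichment is triggered. I would argue by contrapositive: because $\bm{\Psi}^u$ has $\ell^2$--orthonormal columns, the map $\bm{w} \mapsto (\bm{\Psi}^u)^T \bm{w}$ restricts to an isometry on $\mathrm{col}(\bm{\Psi}^u)$, so if every $\bm{\psi}^p_k$ involved in an (approximate) linear relation for $\bm{\xi}_\ell$ lay in $\mathrm{col}(\bm{\Psi}^u)$ one would immediately contradict the orthonormality of $\bm{\Psi}^p$. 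The mixed case, in which $\bm{\psi}^p_\ell$ but not all the previous $\bm{\psi}^p_k$ belong to $\mathrm{col}(\bm{\Psi}^u)$, is the most subtle and would be handled either by a finer projection argument or --- in the regime $\epsilon_t > 0$ --- by augmenting the algorithm with a safeguard that skips $\ell$ whenever the Gram--Schmidt residual on line 10 falls below a prescribed tolerance.
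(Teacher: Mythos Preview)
Your reduction to Lemma~\ref{lmm: lemma 1} and the termination/invariant strategy match the paper's skeleton. The paper frames the same idea as an induction on $\ell^\star\in\{1,\dots,n_p^t\}$ and exploits one concrete fact that you only allude to: because the columns of $\bm{\Psi}^p$ are $\ell^2$--orthonormal, the row appended to $\bm{\Psi}^{u,p}$ upon enriching $\bm{\Psi}^u$ with $\bm{\psi}_{\ell^\star}^p$ is $\bigl((\bm{\psi}_{\ell^\star}^p,\bm{\psi}_\ell^p)_2\bigr)_{\ell}=(\delta_{\ell\ell^\star})_\ell$. Reading off the $(n_u^t{+}1)$--th entry of any vanishing combination $\sum_{\ell\le\ell^\star}\bar\alpha_\ell\bm{\xi}_\ell$ then forces $\bar\alpha_{\ell^\star}=0$, and the inductive hypothesis closes. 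Your Gram--Schmidt invariant reaches the same conclusion but does not isolate this mechanism, which is why the well--definedness of line~10 looks harder to you than it is.

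On that obstacle: the ``mixed case'' is in fact resolved by exactly the finer projection argument you gesture at. Suppose $\bm{\psi}_{\ell^\star}^p\in\mathrm{col}(\bm{\Psi}^u)$ and $\bm{\xi}_{\ell^\star}=\sum_{k<\ell^\star}c_k\bm{\xi}_k$. With $P:=\bm{\Psi}^u(\bm{\Psi}^u)^T$ the first assumption gives $P\bm{\psi}_{\ell^\star}^p=\bm{\psi}_{\ell^\star}^p$, while applying $\bm{\Psi}^u$ to the second yields $\bm{\psi}_{\ell^\star}^p=\sum_{k<\ell^\star}c_k\,P\bm{\psi}_k^p$. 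Taking the $\ell^2$--inner product with $\bm{\psi}_{\ell^\star}^p$ and using $P=P^T$ together with the orthonormality of $\bm{\Psi}^p$ gives $1=0$. Hence for $\epsilon_t=0$ the enrichment branch can never be triggered in a configuration where the normalization on line~10 would fail, and no safeguard is needed. The paper does not spell this out either --- it merely asserts that the spans of $\{\bm{\psi}_j^u\}\cup\{\bm{\psi}_{\ell^\star}^p\}$ and $\{\bm{\psi}_j^u\}\cup\{\bm{\psi}^+\}$ coincide --- so your instinct that this point deserved scrutiny was sound. For strictly positive $\epsilon_t$ the issue you raise is genuine and neither argument fully closes it; the paper effectively treats $\epsilon_t>0$ as a numerical safeguard rather than part of the well--posedness claim.
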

\begin{proof}
	Let us denote the columns of $\bm{\Psi}^{u,p}$ as $\{\bm{\xi}_\ell\}_{\ell=1}^{n_p^t}$. Since the elements of $\{\bm{\xi}_\ell\}_{\ell=1}^{n_p^t}$ are linearly dependent, $\exists \bar{\bm{\alpha}} = [\bar{\alpha}_1, \cdots, \bar{\alpha}_{n_p^t}] \neq \bm{0}$ such that 
	\begin{equation}
	\label{eq: instability condition}
	\bm{\vartheta} := \sum\limits_{\ell=1}^{n_p^t} \bar{\alpha}_\ell \bm{\xi}_\ell = \sum\limits_{\ell=1}^{n_p^t} \bar{\alpha}_\ell \left(\bm{\Psi}^u\right)^T \bm{\psi}^p_\ell = \bm{0} \ \ \in \mathbb{R}^{n_u^t}~.
	\end{equation}
	The goal is to enrich the temporal velocity reduced basis in such a way that Eq.\eqref{eq: instability condition} cannot hold and it can be accomplished by applying Algorithm \ref{alg: pressure time supremizers}, with pressure as dual field. So, in Algorithm \ref{alg: pressure time supremizers}, let us set $\bm{\Psi}^d=\bm{\Psi}^p$ and let $\varepsilon^t\geq0$ (Line \ref{line 1}). We proceed iteratively, showing that, upon a suitable enrichment procedure, $\forall \ell^\star \in \{1, \dots, n_p^t\}$, $\nexists \bar{\bm{\alpha}} = [\bar{\alpha_1}, \dots, \bar{\alpha}_{\ell^\star}] \in \mathbb{R}^{\ell^\star}, \bar{\bm{\alpha}} \neq \bm{0}$ such that
	\begin{equation}
	\label{eq: stability condition iterative}
	\bm{\vartheta} := \sum\limits_{\ell=1}^{\ell^\star} \bar{\alpha}_\ell \bm{\xi}_\ell = \sum\limits_{\ell=1}^{\ell^\star} \bar{\alpha}_\ell \left(\bm{\Psi}^u\right)^T \bm{\psi}^p_\ell = \bm{0} \ \ \in \mathbb{R}^{n_u^t}~.
	\end{equation}
	Firstly, let us consider the case $\ell^\star = 1$. Here, we suppose that $\bm{\xi}_1 = \bm{0}$, so that $\vert\vert \bm{\xi}_1 \vert\vert_2 \leq \varepsilon^t \ \forall \varepsilon^t \geq 0$. This means that the first pressure temporal basis function $\bm{\psi}_1^p$ belongs to the orthogonal complement of the velocity temporal reduced subspace. In such a case, the matrix $\bm{\Psi}^{u,p}$ cannot be full column rank and any $\bm{\bar{\alpha}} = [\bar{\alpha}_1] \in \mathbb{R}^{1}$ trivially satisfies Eq.\eqref{eq: stability condition iterative}. Let us now enrich the velocity temporal reduced basis with $\bm{\psi}_1^p$ (Lines \ref{line 10}--\ref{line 11}). Upon the enrichment, the last entry of $\bm{\xi}_1$ equals $\left(\bm{\psi}^p_1, \bm{\psi}^p_1\right)_2 =  \vert\vert\bm{\psi}^p_1\vert\vert_2^2 = 1$, so $\bm{\xi}_1 \neq \bm{0}$. Hence, for $\ell^\star=1$ Eq.\eqref{eq: stability condition iterative} is satisfied and by Lemma \ref{lmm: lemma 1} \emph{inf--sup} stability with respect to the space spanned by $\bm{\psi}^p_1$ is attained.
	
	Let us now consider $\ell^\star \in \{2, \dots, n_p^t\}$ and let us suppose that Eq.\eqref{eq: stability condition iterative} holds for $\ell^\star-1$. Let $\bm{\Psi}^u \in \mathbb{R}^{N^t \times n_u^t}$ be the matrix encoding the temporal velocity reduced basis at the $\ell^\star$--th step of the algorithm; notice that the value of $n_u^t$ may have changed during the application of the algorithm. Now, if the first $\ell^\star$ columns of $\bm{\Psi}^{u,p}$ are linearly independent, then Eq.\eqref{eq: stability condition iterative} holds by definition and via Lemma \ref{lmm: lemma 1} \emph{inf--sup} stability with respect to the space spanned by $\{\bm{\psi}_j^p\}_{j=1}^{\ell^\star}$ is guaranteed. Otherwise, $\bm{\xi}_{\ell^\star}$ can be expressed as a linear combination of the previous linearly--dependent columns $\{\bm{\xi}_\ell\}_{\ell=1}^{\ell^\star-1}$. Such a condition can be verified by comparing $\bm{\xi}_{\ell^\star}$ with its orthogonal projection $\bm{\pi}_{\bm{\xi}_{\ell^\star}}$ onto the $(\ell^\star-1)$--dimensional subspace spanned by $\{\bm{\xi}_\ell\}_{\ell=1}^{\ell^\star-1}$. Indeed, if Eq.\eqref{eq: stability condition iterative} does not hold, then $\bm{\xi}_{\ell^\star}$ is such that $||\bm{\xi}_{\ell^\star} - \bm{\pi}_{\bm{\xi}_{\ell^\star}}||_2 = 0$ and so $||\bm{\xi}_{\ell^\star} - \bm{\pi}_{\bm{\xi}_{\ell^\star}}||_2 \leq \varepsilon^t \ \forall \varepsilon^t\geq 0$ (Lines \ref{line 7}--\ref{line 8}). In this case, in Algorithm \ref{alg: pressure time supremizers} we enrich the velocity temporal reduced basis with the pressure temporal basis function $\bm{\psi}^p_{\ell^\star}$ associated to $\bm{\xi}_{\ell^\star}$, i.e. such that $\bm{\xi}_{\ell^\star} = \left(\bm{\Psi}^u\right)^T \bm{\psi}^p_{\ell^\star}$ (Lines \ref{line 10}--\ref{line 11}). 
	
	We have to verify that, upon the enrichment procedure, Eq.\eqref{eq: stability condition iterative} holds for the current value of $\ell^\star$.
	Let us consider the last entry of $\bm{\vartheta}$, that corresponds to the novel velocity temporal basis function $\bm{\psi}_{n_u^t+1}^u = \bm{\psi}_{\ell^\star}^p$. Exploiting the orthonormality of the pressure temporal basis functions, we have that
	\begin{equation*}
	\left(\bm{\vartheta}\right)_{n_u^t+1} = \sum_{\ell=1}^{\ell^\star} \bar{\alpha}_\ell \left(\bm{\xi}_\ell\right)_{n_u^t+1} = \sum_{\ell=1}^{\ell^\star} \bar{\alpha}_\ell \left(\bm{\psi}_{\ell^\star}^p, \bm{\psi}_\ell^p\right)_2 = \sum_{\ell=1}^{\ell^\star} \bar{\alpha}_\ell \delta_{\ell \ell^\star} = \bar{\alpha}_{\ell^\star} = 0 \ \ \implies \ \ \bar{\alpha}_{\ell^\star} = 0~.
	\end{equation*}
	Therefore, $\bar{\alpha}_{\ell^\star} = 0$ and Eq.\eqref{eq: stability condition iterative} reduces to $\bm{\vartheta} = \sum_{\ell=1}^{\ell^\star-1} \bar{\alpha}_\ell \bm{\xi}_\ell = \bm{0}$.  However, since the first $n_u^t$ components of the vectors $\{\bm{\xi}_\ell\}_{\ell=1}^{\ell^\star-1}$ are linearly independent by hypothesis, we also have that $\bar{\alpha}_\ell=0$ $\forall \ell \in \{1,\cdots, \ell^\star-1\}$. Hence, Eq.\eqref{eq: stability condition iterative} is satisfied for the current value of $\ell^\star$ and, upon the enrichment, the first $\ell^\star$ columns of $\mytilde{\bm{\Psi}}^{u,p}$ are linearly independent. Proceeding by induction up to $\ell^\star = n_p^t$, we can then prove that, upon the enrichment procedure described in Algorithm \ref{alg: pressure time supremizers}, Eq.\eqref{eq: instability condition} cannot hold. Therefore, the \emph{inf--sup} inequality in Eq.\eqref{eq: space--time inf--sup stability pressure} is satisfied. 
	
	One remark is necessary in order to conclude the proof. In Line \ref{line 12} of Algorithm \ref{alg: pressure time supremizers}, the velocity temporal reduced basis is not enriched with the ``critical'' pressure basis function $\bm{\psi}^p_{\ell^\star}$ (as we assumed before), but with the normalized orthogonal complement $\bm{\psi}^+$ of the latter with respect to the space spanned by $\{\bm{\psi}^u_j\}_{j=1}^{n_u^t}$ (Line \ref{line 10}). In this way, the velocity temporal reduced basis remains orthonormal upon the enrichment procedure. However, this does not impact the linear independence of the columns of $\bm{\Psi}^{u,p}$, since the subspaces spanned by $\{\{\bm{\psi}^u_j\}_{j=1}^{n_u^t}, \bm{\psi}^p_{\ell^\star}\}$ and by $\{\{\bm{\psi}^u_j\}_{j=1}^{n_u^t}, \bm{\psi}^+\}$ trivially coincide. 
\end{proof}

Notice that the velocity reduced basis enrichment in space and in time is performed according to different paradigms. Indeed, in space we augment the basis with solutions to optimization problems stemming from the \emph{inf--sup} inequalities, while in time we iteratively select elements to be added to the basis in order to guarantee numerical stability, but without any optimization procedure being involved. For this reason, we will refer to the enrichment in space as the ``supremizers enrichment'' and to the enrichment in time as the ``stabilizers enrichment''.

\begin{remark}
The choice $\varepsilon^t=0$ is enough to retain \emph{inf--sup} stability, since it guarantees that $\beta_t^p$ in Eq.\eqref{eq: temporal inf--sup inequality} is strictly positive. However, if the columns of $\bm{\Psi}^{u,p}$ are ``almost'' collinear, then $\beta_t^p \gtrsim 0$ and the accuracy of the method is compromised. Hence, from a numerical standpoint, selecting $\varepsilon^t>0$ is crucial. We numerically investigated the effect of $\varepsilon^t$ in Subsection \ref{subs:results bif}. 
\end{remark}
\begin{remark}
In principle, the choice of computing fixed temporal POD modes rather than tailored ones, specific to each spatial POD mode, can be disadvantageous. Higher--index spatial modes are indeed associated to higher--frequency temporal modes and constructing temporal bases tailored to the elements of the spatial one may improve the accuracy of the approximation and also reduce the computational cost of the simulation~\cite{choi2019space, tenderini2022pde}. However, choosing a fixed temporal basis simplifies the enrichment procedure, since Algorithm \ref{alg: pressure time supremizers} can be performed only once (per dual field), rather than $n_u^s n_d^s$ times, being $n_d^s$ the number of spatial modes of the dual field of interest.
\end{remark}
\bigskip
\begin{corollary}
	\label{cor: corollary 2}
	Let $k \in \{1,\cdots,N_D\}$. Let the velocity reduced basis in space be enriched with the $k$--th Lagrange multiplier supremizers $\mathcal{S}_h^{\lambda_k}$. Assume that the columns of $\bm{\Psi}^{u,\lambda_k}$ are linearly dependent. If the velocity temporal reduced basis $\bm{\Psi}^u$ is enriched according to Algorithm \ref{alg: pressure time supremizers}, setting $\bm{\Psi}^d=\bm{\Psi}^{\lambda_k}$ and fixing $\varepsilon^t \geq 0$, then the inf--sup inequality in Eq.\eqref{eq: space--time inf--sup stability lagrange multipliers} is satisfied. 
\end{corollary}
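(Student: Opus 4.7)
The plan is to mirror the proof of Corollary \ref{cor: corollary 1} almost verbatim, replacing the pressure dual field with the $k$--th Lagrange multiplier dual field. By Lemma \ref{lmm: lemma 2}, inf--sup stability as in Eq.\eqref{eq: space--time inf--sup stability lagrange multipliers} reduces to guaranteeing that the columns of $\bm{\Psi}^{u,\lambda_k} = (\bm{\Psi}^u)^T \bm{\Psi}^{\lambda_k}$ are linearly independent. So the task is purely algebraic: show that Algorithm \ref{alg: pressure time supremizers}, fed with the dual basis $\bm{\Psi}^d = \bm{\Psi}^{\lambda_k}$, produces an enriched $\bm{\Psi}^u$ for which this full--rank condition holds.

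I would proceed by induction on the column index $\ell^\star \in \{1, \dots, n_{\lambda_k}^t\}$, exactly as in the proof of Corollary \ref{cor: corollary 1}. Denote the columns of $\bm{\Psi}^{u,\lambda_k}$ by $\{\bm{\xi}_\ell\}$. At the $\ell^\star$--th iteration, either $\bm{\xi}_{\ell^\star}$ is linearly independent of the previously processed columns --- in which case the condition $\| \bm{\xi}_{\ell^\star} - \bm{\pi}_{\bm{\xi}_{\ell^\star}} \|_2 > \epsilon_t$ holds (for $\epsilon_t = 0$; if $\epsilon_t > 0$ we add a safety margin) and no enrichment is needed --- or $\bm{\xi}_{\ell^\star}$ is (nearly) a linear combination of $\{\bm{\xi}_\ell\}_{\ell < \ell^\star}$, in which case the algorithm augments $\bm{\Psi}^u$ with the normalized orthogonal complement $\bm{\psi}^+$ of $\bm{\psi}_{\ell^\star}^{\lambda_k}$ with respect to the current temporal velocity basis.

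The key computation, identical to the pressure case, is to verify that after this enrichment the newly created last entry of $\bm{\xi}_{\ell^\star}$ equals $(\bm{\psi}^+, \bm{\psi}^{\lambda_k}_{\ell^\star})_2$, which by construction is nonzero, while the corresponding last entries of the earlier columns $\{\bm{\xi}_\ell\}_{\ell < \ell^\star}$ involve $(\bm{\psi}^+, \bm{\psi}^{\lambda_k}_\ell)_2$. Here the argument uses the orthonormality of $\{\bm{\psi}^{\lambda_k}_\ell\}$ in $\ell^2$--norm --- which holds because the Lagrange multiplier temporal basis is obtained via a ``standard'' POD exactly as the pressure one --- to conclude that, in any putative linear dependence $\sum_{\ell \leq \ell^\star} \bar{\alpha}_\ell \bm{\xi}_\ell = \bm{0}$, the new coordinate forces $\bar{\alpha}_{\ell^\star} = 0$, reducing the relation to the linearly independent tail. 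Iterating up to $\ell^\star = n_{\lambda_k}^t$ yields full column rank of $\bm{\Psi}^{u,\lambda_k}$.

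Finally, I would close by noting that the replacement of $\bm{\psi}_{\ell^\star}^{\lambda_k}$ with its normalized orthogonal complement $\bm{\psi}^+$ in Line \ref{line 10} does not affect the argument, since $\mathrm{span}\{\{\bm{\psi}^u_j\}, \bm{\psi}^{\lambda_k}_{\ell^\star}\} = \mathrm{span}\{\{\bm{\psi}^u_j\}, \bm{\psi}^+\}$, and by Lemma \ref{lmm: lemma 2} the resulting linear independence of the columns of $\bm{\Psi}^{u,\lambda_k}$ is precisely what is needed for Eq.\eqref{eq: space--time inf--sup stability lagrange multipliers}. The only conceptual difference from Corollary \ref{cor: corollary 1} is cosmetic: the symbol $p$ is replaced by $\lambda_k$; the main (mild) obstacle is making sure the induction step relies on orthonormality of the \emph{temporal} Lagrange multiplier basis --- which is guaranteed by the T-HOSVD construction --- rather than on the block structure in space, so no additional hypothesis is needed beyond what Lemma \ref{lmm: lemma 2} already assumes.
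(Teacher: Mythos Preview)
Your proposal is correct and follows exactly the route the paper takes: the paper's own proof is the single sentence ``The proof proceeds as the one of Corollary \ref{cor: corollary 1}, taking advantage of Lemma \ref{lmm: lemma 2},'' and you have spelled out precisely that substitution. The only cosmetic wrinkle is in your middle paragraph, where you phrase the key computation directly in terms of $\bm{\psi}^+$ yet still appeal to the orthonormality of $\{\bm{\psi}_\ell^{\lambda_k}\}$; strictly speaking that orthonormality bites only if one first argues (as the paper does for pressure, and as you correctly do in your closing paragraph) via the span identity $\mathrm{span}\{\{\bm{\psi}^u_j\},\bm{\psi}^{\lambda_k}_{\ell^\star}\}=\mathrm{span}\{\{\bm{\psi}^u_j\},\bm{\psi}^+\}$, so that the last coordinate can be computed as $(\bm{\psi}^{\lambda_k}_{\ell^\star},\bm{\psi}^{\lambda_k}_\ell)_2=\delta_{\ell\ell^\star}$.
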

\begin{proof}
	The proof proceeds as the one of Corollary \ref{cor: corollary 1}, taking advantage of Lemma \ref{lmm: lemma 2}.
\end{proof} 

%Therefore, under the assumption that the space--reduced problem is well--posed upon a suitable supremizers enrichment procedure, the space--time--reduced problem arising from a Galerkin projection is also well--posed if the velocity temporal reduced basis is enriched according to Corollaries \ref{cor: corollary 1} -- \ref{cor: corollary 2}. 
One remark is worth to follow. While supremizers enrichment in space has to be performed with respect to all the dual fields in order to guarantee \emph{inf--sup} stability, in time we can often consider only one of them. Indeed, once the velocity temporal reduced basis has been enriched with respect, say, to pressure (so that $\bm{\Psi}^{u,p}$ is full column rank), it is often the case that also the columns of the matrices $\bm{\Psi}^{u, \lambda_k}$ (with $k \in \{1, \cdots, N_D\}$) are linearly independent. If so, no further stabilizers enrichment is necessary. As a consequence, the ``stabilized'' velocity temporal reduced basis depends on the order in which the dual fields are considered. We numerically investigated this aspect in Subsection \ref{subs:results bif}.

\subsubsection{Least--squares Petrov--Galerkin projection}
\label{subs: ST--PGRB stability}
Least--squares (LS) Petrov--Galerkin (PG) reduced basis (RB) methods have already been proposed in the framework of space--time model order reduction in \cite{choi2019space, kim2021efficient}. Similarly to the space--reduced case, the main idea is to compute the reduced solution $\widehat{\bm{w}} \in \mathbb{R}^{n^{st}}$ by least--squares minimization of the FOM residual $\bm{r}^{st}(\widehat{\bm{w}})$ (see Eq.\eqref{eq: space time reduced problem}) in a suitable norm.  

In this work, we decided to extend the algebraic LS--PG--RB method that has been proposed for steady parametrized Stokes equations in \cite{abdulle2015petrov, dalsanto2019algebraic} to the time--dependent case, in the framework of space--time model order reduction. In the steady case, the key idea of the method is to define a global parameter--dependent supremizing operator $T_{\bm{\mu}}: \mathcal{S}_h \to  \mathcal{S}_h$ such that
\begin{equation}
\label{eq: global supremizing operator space}
\left(T_{\bm{\mu}}(\vec{z}_h), \vec{w}_h\right)_{\mathcal{S}_h} = \mathcal{A}_{\bm{\mu}}(\vec{z}_h, \vec{w}_h)~~.
\end{equation}
Here $\mathcal{S}_h$ is the finite--dimensional subspace where the FOM solutions are sought, equipped with the inner product $(\cdot, \cdot)_{\mathcal{S}_h}$, and $\mathcal{A}_{\bm{\mu}}$ is the global (potentially parameter--dependent) steady Stokes operator. The LS--PG--RB method stems from a Petrov--Galerkin projection, where the trial subspace $\mathcal{S}_n := span\left\{\vec{\xi}_i, \ i \in \{1, \cdots, n\}\right\} \subset \mathcal{S}_h$ is computed in a standard fashion (e.g. via truncated POD of the mode--1 unfolding of the snapshots' tensor), whereas the test one is defined as $\mytilde{\mathcal{S}}_n^{\bm{\mu}} := span\left\{T_{\bm{\mu}}(\vec{\xi}_i), \ i \in \{1,\cdots,n\} \right\}$. From an algebraic standpoint, the basis of the test space can be encoded in the matrix $\bm{X}_h^{-1} \bm{A}_h \bm{\Phi} \in \mathbb{R}^{N^s \times N^s}$, where $\bm{X}_h \in \mathbb{R}^{N^s \times N^s}$ is the FOM norm matrix, $\bm{A}_h \in \mathbb{R}^{N^s \times N^s}$ is the FOM discretization of the global steady Stokes operator and $\bm{\Phi} \in \mathbb{R}^{N^s \times n^s}$ is the matrix encoding the reduced basis. Ultimately, the solution is retrieved by solving the following linear system
\begin{equation}
\begin{alignedat}{2}
\label{eq: linear system LS--PG--RB}
\bm{A}_n \bm{w}_n = \bm{f}_n \quad \text{with} \quad \bm{A}_n &= \left(\bm{A}_h\bm{\Phi}\right)^T \bm{X}_h^{-1}\bm{A}_h \bm{\Phi} \quad \in\mathbb{R}^{n^s\times n^s}~, \\
\bm{f}_n &= \left(\bm{A}_h\bm{\Phi}\right)^T\bm{X}_h^{-1}\bm{f}_h \qquad \in \mathbb{R}^{n^s}~,
\end{alignedat}
\end{equation}
where $\bm{f}_h = \bm{f}_h({\bm{\mu}}) \in \mathbb{R}^{N^s}$ is the (potentially parameter--dependent) FOM right--hand side. A key property of the LS--PG--RB method is that the choice of the test space $\mytilde{\mathcal{S}}_n^{\bm{\mu}}$ automatically guarantees \emph{inf--sup} stability. Therefore, no supremizers enrichment of the velocity reduced basis has to be performed to retain well--posedness. Additionally, it can be shown that the solution to Eq.\eqref{eq: linear system LS--PG--RB} minimizes the FOM residual in the norm induced by $\bm{X}_h^{-1}$. We refer to \cite{abdulle2015petrov, quarteroni2015reduced} for further details.

Focusing on the problem at hand and in the context of space--time model order reduction, the application of the LS--PG--RB method amounts at solving the following minimization problem:
\begin{equation}
\label{eq: functional minimization ST--PGRB}
\text{Find } \widehat{\bm{w}}^{pg} \in \mathbb{R}^{n^{st}} \ \text{such that:} \qquad \widehat{\bm{w}}^{pg} = \underset{\widehat{\bm{v}} \in \mathbb{R}^{n^{st}}}{\text{argmin}}\dfrac12 \vert\vert \bm{r}^{st}(\widehat{\bm{v}}) \vert\vert^2_{\left(\bm{X}^{st}\right)^{-1}}~.
\end{equation}
We refer to Eq.\eqref{eq: functional minimization ST--PGRB} as the ST--PGRB problem. Exploiting the convexity of the functional to be minimized, $\widehat{\bm{w}}^{pg}$ can be computed as the solution to the following linear system:
\begin{equation}
\begin{alignedat}{2}
\label{eq: linear system ST--PGRB}
\widehat{\bm{A}}^{pg} \widehat{\bm{w}}^{pg} = \widehat{\bm{F}}^{pg} \quad \text{with} \quad \widehat{\bm{A}}^{pg} &= \left(\bm{A}^{st}\bm{\Pi}\right)^T \left(\bm{X}^{st}\right)^{-1}\bm{A}^{st} \bm{\Pi} \quad \ \in \mathbb{R}^{n^{st} \times n^{st}}~,\\
\widehat{\bm{F}}^{pg} &= \left(\bm{A}^{st}\bm{\Pi}\right)^T\left(\bm{X}^{st}\right)^{-1}\bm{F}^{st} \quad \quad \ \in \mathbb{R}^{n^{st}}~.
\end{alignedat}
\end{equation}
The block structure of the problem at hand can be exploited to ease the assembling of $\widehat{\bm{A}}^{pg}$ and $\widehat{\bm{F}}^{pg}$. As in the ST--GRB approach, parameter--independent blocks can be pre--computed during the offline phase of the method. Nonetheless, we remark that, compared to ST--GRB, more blocks of the left--hand side matrix feature parametric dependency, hence inducing an increase of the online computational cost. However, the latter can be reduced and made independent from the number of space--time full--order DOFs by leveraging the affine parametrization of the reaction term. We refer to Appendix \ref{appendix: ST--PGRB assembling} for details. By analogy with the steady case (see Eq.\eqref{eq: global supremizing operator space}), we can define a space--time parameter--dependent global supremizing operator $\mathcal{T}^{st}_{\bm{\mu}}: \mathcal{ST}_{h,\delta} \to \mathcal{ST}_{h,\delta}$ such that
\begin{equation}
\label{eq: global supremizing operator space--time}
\left(\mathcal{T}^{st}_{\bm{\mu}}(\vec{z}_{h,\delta}),\vec{w}_{h,\delta}\right)_{\mathcal{ST}_{h,\delta}} = \mathcal{A}^{st}_{\bm{\mu}}(\vec{z}_{h,\delta}, \vec{w}_{h,\delta})~,
\end{equation}
where $\mathcal{A}^{st}_{\bm{\mu}}$ is a parameter--dependent bilinear form corresponding to the space--time global Stokes operator, whose full--order algebraic counterpart is given by the matrix $\bm{A}^{st}$ (see Eq.\eqref{eq: monolitic_FOM_system}). From an algebraic standpoint, the basis of the test space --- constructed as orthonormal with respect to the norm induced by $\bm{X}^{st}$ (see Eq.\eqref{eq: ST norms matrices}) --- can be encoded in the matrix $\mytilde{\bm{\Pi}} = \left(\bm{X}^{st} \right)^{-1} \bm{A}^{st} \bm{\Pi} \in \mathbb{R}^{N^{st} \times n^{st}}$.
\begin{theorem}
	\label{thm: theorem 3}
	Assume that the conditions in Eq.\eqref{eq: discrete_inf--sup FOM 2a} hold. Define $\mytilde{\bm{\Pi}} := \left(\bm{X}^{st} \right)^{-1} \bm{A}^{st} \bm{\Pi}$. Then, the ST--PGRB problem in Eq.\eqref{eq: functional minimization ST--PGRB} is inf--sup stable, i.e.
	\begin{equation}
	\label{eq: inf--sup stability petrov-galerkin}
	\exists \ \beta_{STPG} > 0 \quad \text{ such that} \quad \inf\limits_{\widehat{\bm{w}}\neq\bm{0}} \sup\limits_{\widehat{\bm{y}}\neq\bm{0}} \dfrac{\widehat{\bm{w}}^T \widehat{\bm{A}}^{pg} \widehat{\bm{y}}}{\vert\vert\widehat{\bm{w}}\vert\vert_2 {\vert\vert\widehat{\bm{y}}}\vert\vert_2} \geq \beta_{STPG}~.~
	\end{equation}
	\begin{comment}
	Moreover, the problem admits a unique solution $\widehat{\bm{w}}^{pg}(\bm{\mu}) \in \mathbb{R}^{n^{st}}$ for every $\bm{\mu} \in \mathcal{D}$ that satisfies
	\begin{equation}
	\label{eq: solution petrov-galerkin property}
	\vert\vert \widehat{\bm{w}}^{pg}(\bm{\mu}) \vert\vert_2 \leq \dfrac{1}{\beta_{STPG}} \vert\vert \bm{F}^{st} \vert\vert_{\left(\bm{X}^{st}\right)^{-1}}
	\end{equation}
	\end{comment}
\end{theorem}
We refer to \cite{abdulle2015petrov, quarteroni2015reduced} for the proof of a corresponding result in the steady case. Upon defining the global supremizing operator as in Eq.\eqref{eq: global supremizing operator space--time}, the same proof applies to the time--dependent case, leveraging space--time model order reduction.
\begin{remark}
In \cite{dalsanto2019algebraic}, the authors present a purely algebraic LS--PG--RB method, based on the substitution of the norm matrix with a surrogate $\bm{P}_{\bm{X}}$. This provides significant computational gains if parametrized geometries are considered, since in such cases the norm matrix is parameter--dependent and the online computation of its inverse may represent a computational bottleneck. A smart choice consists then in choosing the surrogate $\bm{P}_{\bm{X}}$ as an easy--to--invert and parameter--independent matrix. The well--posedness of the resulting problem is proven not to be impacted by such a choice. Even if we did not focus on problems featuring parametrized geometries, we nevertheless decided to approximate the spatio--temporal norm matrix $\bm{X}^{st}$ with an easy--to--invert surrogate $\bm{P}_{\bm{X}}$. In particular, we chose $\bm{P_X}$ as the diagonal part of $\bm{X}^{st}$, i.e. $(\bm{P_X})_{ij} = (\bm{X}^{st})_{ij} \ \delta_{ij}$.  
\end{remark}

\section{Numerical results} \label{sec:section4}
We evaluated the performances of the ST--GRB and ST--PGRB methods, taking into account both accuracy and computational efficiency. The ``standard'' RB method (denoted as SRB--TFO), featuring dimensionality reduction only in space by means of a Galerkin projection, served as a baseline. 

\subsection{Setup} \label{subs: subsection 4.1}
We solved the unsteady incompressible Stokes equations endowed with an additional parameter--dependent reaction term (see Eq.\eqref{eq: strong_form_stokes}) in two different geometries (Figure \ref{fig: domain geometries}): (1) an idealized symmetric bifurcation with characteristic angle $\alpha=50^\circ$; (2) a patient--specific geometry of a femoropopliteal bypass. The geometry of the bifurcation is identical to the one employed in \cite{pegolotti2021model} as a building block for the modular geometrical approximation of blood vessels. The geometry of the femoropopliteal bypass --- bridging the circulation between the femoral artery and the popliteal one in case of severe stenotic formations in the former --- has been reconstructed from CT scans as detailed in \cite{marchandise2012quality} and it has been employed e.g. in \cite{colciago2014comparisons, colciago2018reduced}. For all the simulations, we set the blood density and viscosity to $\rho = 1.06 \ g \cdot cm^{-3}$ and $\mu = 3.5 \cdot 10^{-3} \ g \cdot cm^{-1} \cdot s^{-1}$.

\begin{figure}[t]
	\centering
	\includegraphics[width=0.9\textwidth]{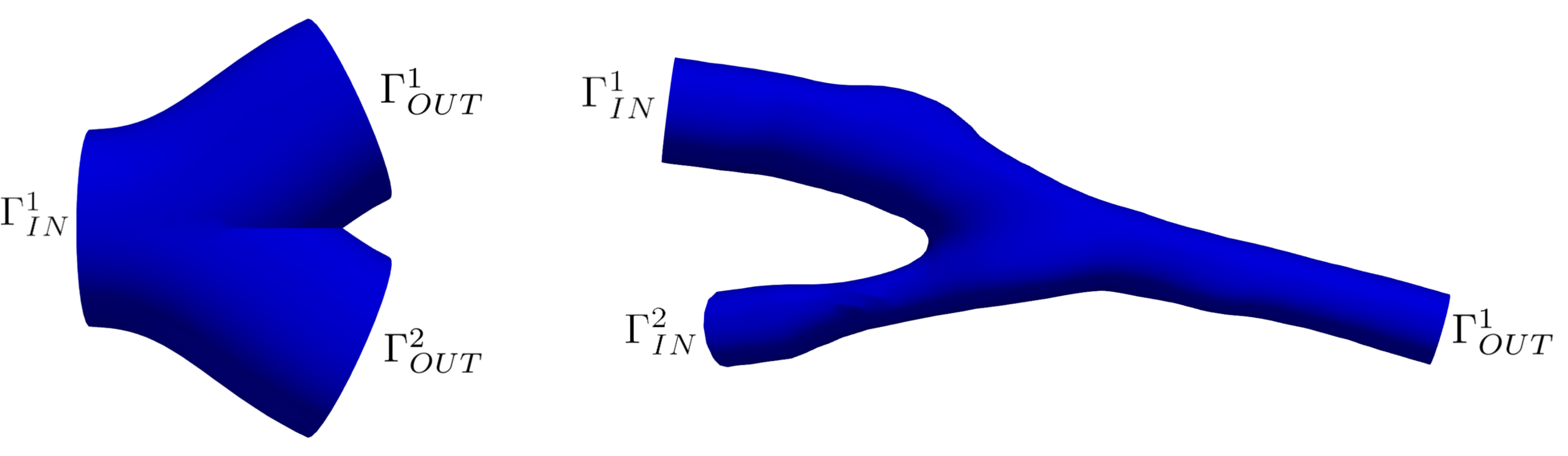}
	\caption{Geometries of the Symmetric Bifurcation (left) and of the Femoropopliteal Bypass (right).}
	\label{fig: domain geometries}
\end{figure}

Parametric dependency concerns the blood clots density functions $\{\rho_c^q(\bm{x}; \bm{\mu})\}_{q=1}^{N_c}$ and the temporal parts of the Dirichlet data $\{g_k^t(t; \bm{\mu})\}_{k=1}^{N_D}$. Blood clots domains $\{\Omega_c^q\}_{q=1}^{N_c}$ are defined as follows:
\begin{equation}
\label{eq: blood clot domain}
\Omega_c^q = \left\{ \bm{x} \in \Omega : \ \vert\vert \bm{x} - \bm{x}_c^q \vert\vert_{*^q} \leq r_c^q \right\}~,
\end{equation}
where $\bm{x}_c^q$ and $r_c^q$ are, respectively, the center and the radius of the $q$--th clot, while $\vert\vert\cdot\vert\vert_{*^q}: \mathbb{R}^3 \to \mathbb{R}^+$ is such that $\vert\vert\bm{y}\vert\vert_{*^q} = \left( \bm{y}^T \bm{X}_q \bm{y} \right)^{1/2} $ with
\begin{equation}
\label{eq: clot norm}
\bm{X}_q := 
\begin{bmatrix}
\vert & \vert & \vert \\
\bm{n}^q & \bm{t}_1^q & \bm{t}_2^q \\
\vert & \vert & \vert
\end{bmatrix} 
\begin{bmatrix}
\sigma_{\bm{n}^q} & & \\
& \sigma_{\bm{t}_1^q} & \\
& & \sigma_{\bm{t}_2^q}
\end{bmatrix} 
\begin{bmatrix}
\text{---} & (\bm{n}^q)^T & \text{---} \\
\text{---} & (\bm{t}_1^q)^T & \text{---} \\
\text{---} & (\bm{t}_2^q)^T & \text{---}
\end{bmatrix}~.
\end{equation}
Here $(\bm{n}^q, \bm{t}_1^q, \bm{t}_2^q)$ is an orthonormal reference system, where $\bm{n}^q$ is the outward unit normal vector to $\partial\Omega$ at $\bm{x}_c^q$ and $\bm{t}_1^q$ is parallel to the main flow direction. The values of $\sigma_{\bm{n}^q}$, $\sigma_{\bm{t}_1^q}$,  $\sigma_{\bm{t}_2^q}$ influence the shape of the clot and, in particular, determine the ``elongation'' of the clot domain $\Omega_c^q$ along the directions $\bm{n}^q$, $\bm{t}_1^q$, $\bm{t}_2^q$, respectively. The blood clot density function is defined as follows:
\begin{equation}
\label{eq: blood clot density}
\rho_c^q(\bm{x}; \bm{\mu}_q^c) = 
\begin{cases}
\bm{\mu}_q^c & 
\text{ if } \vert\vert \bm{x} - \bm{x}_c^q \vert\vert_{*^q} \leq (1 - \varepsilon_c) r_c^q~, \\
\bm{\mu}_q^c \ \cos\left(\dfrac{\pi}{2} \left( \dfrac{\vert\vert \bm{x} - \bm{x}_c^q \vert\vert_{*^q} - (1 - \varepsilon_c) r_c^q}{\varepsilon_c r_c^q} \right) \right) & 
\text{ if } (1 - \varepsilon_c) r_c^q \leq \vert\vert \bm{x} - \bm{x}_c^q \vert\vert_{*^q} < r_c^q~, \\
0 & \text{ otherwise. }
\end{cases}
\end{equation}
For all $q \in \{1, \dots, N_c\}$, we select $\bm{\mu}_q^c \overset{iid}{\sim} Bern(1/N_c) \ \mathcal{U}(10^1, 10^3)$, where $Bern(p)$ indicates a Bernoulli distribution of parameter $p \in [0,1]$ and $\mathcal{U}(a,b)$ denotes a uniform distribution in the interval $[a,b]$, for $a,b \in \mathbb{R}$, $a < b$. We define the blood clots parameter domain $\mathcal{D}_c := [0, 10^3]^{N_c}$. In both test cases, we set $\varepsilon_c=0.1$.

Regarding the Dirichlet datum, let us introduce the function $\vec{g}_{ref}^{\bm{\mu}^f}: \Gamma \times [0,T] \to \mathbb{R}$ --- being $\Gamma$ a circular surface of radius $R$ and center $\bm{x}_0$ --- such that $\vec{g}_{ref}^{\bm{\mu}^f}(\bm{x}, t) = \vec{g}_{ref}^s(\bm{x})g_{ref}^t(t;\bm{\mu}^f)$. The quantity $\bm{\mu}^f$ defines the parameters that are related to the flow rate. Firstly, we define
\begin{equation}
\label{eq: reference g}
\vec{g}_{ref}^s(\bm{x}) := -\dfrac{2}{\pi R^2}\left(1-\dfrac{||\bm{x}-\bm{x}_0||^2}{R^2}\right)\vec{n}_{\Gamma} \qquad \text{ with } \bm{x} \in \Gamma~, \\ 
\end{equation}
which describes a parabolic velocity profile (Poiseuille flow), and $\vec{n}_{\Gamma}$ is the outward unit normal vector to $\Gamma$.

The Dirichlet datum is then defined as follows in the two test cases:
\begin{itemize}
	\item \textbf{Symmetric Bifurcation}: we impose a periodic--in--time parabolic--in--space velocity profile with a parametrized perturbation at the inlet $\Gamma_{IN}^1$ and we prescribe the flow rate, expressed as a given percentage of the inflow rate, at the outlet $\Gamma_{OUT}^1$. In particular, we have
	\begin{equation}
	\label{eq: g_bifurcation}
	\begin{alignedat}{2}
	g^t_{ref}(t;\bm{\mu}^f) &:= 1-\cos{\dfrac{2\pi t}{T}}+\bm{\mu}^f_1\sin{\dfrac{2\pi \bm{\mu}^f_0 t}{T}} & \qquad & t \in  [0,T] \\
	\vec{g}^{\bm{\mu}^f}(\bm{x},t) &:= 
	\begin{cases}
	\vec{g}^{\bm{\mu}^f}_{ref}(\bm{x},t)  & (\bm{x},t)\in \Gamma_{IN}^1 \times [0,T] \\
	\bm{\mu}^f_2 \ \vec{g}^{\bm{\mu}^f}_{ref}(\bm{x},t)  & (\bm{x},t)\in \Gamma_{OUT}^1 \times [0,T] 
	\end{cases}
	\end{alignedat}
	\end{equation}
	where $T = 0.3 \ s$ is the final time of the simulation. At the snapshot generation stage, we select $\bm{\mu}^f \sim \mathcal{U}\left(\mathcal{D}_f\right)$, $\mathcal{D}_f := [4,8] \times [0.1,0.3] \times [0.2,0.8]$. In particular: $\bm{\mu}^f_0$ models the frequency of the perturbation; $\bm{\mu}^f_1$ models the amplitude of the perturbation; $\bm{\mu}^f_2$ models the amount of flow coming out of $\Gamma_{OUT}^1$ (see Figure \ref{fig: flow rates} -- left). On $\Gamma_{OUT}^2$, we impose homogeneous Neumann BCs. This test case features Reynolds' numbers up to $1.5 \cdot 10^2$.
	
	\item \textbf{Femoropopliteal Bypass}: we impose two different parabolic velocity profiles at the two inlets $\Gamma_{IN}^1$,$\Gamma_{IN}^2$, constraining their sum to be the same for all the snapshots, i.e. 
	\begin{equation}
	\label{eq: g_bypass}
	\vec{g}^{\bm{\mu}^f}(\bm{x},t) := 
	\begin{cases}
	\bm{\mu}^f_4 \ \vec{g}^{\bm{\mu}^f_0}_{ref}(\bm{x},t)    & (\bm{x},t)\in \Gamma_{IN}^1 \times [0,T] \\
	(1-\bm{\mu}^f_4) \ \vec{g}^{\bm{\mu}^f_0}_{ref}(\bm{x},t) & (\bm{x},t)\in \Gamma_{IN}^2 \times [0,T]  
	\end{cases}
	\end{equation}
	Here $\vec{g}_{ref}^t(t; \bm{\mu}^f)$ has been chosen as a polynomial parametrization of the systolic part of the inflow profile employed e.g. in \cite{colciago2014comparisons, colciago2018reduced}, which depends on the parameters $\bm{\mu}^f_0$, $\bm{\mu}^f_1$, $\bm{\mu}^f_2$, $\bm{\mu}^f_3$ (see Figure \ref{fig: flow rates} -- right). During the snapshot generation phase, the parameter values are samples uniformly at random, so that $\bm{\mu}^f \sim \mathcal{U}\left(\mathcal{D}_f\right)$, $\mathcal{D}_f := [0.1070, 0.1653] \times [0.9246, 2.1574] \times [9.9127, 18.4093] \times [0.3130, 0.9390] \times [0.2, 0.8]$. In particular: $\bm{\mu}^f_0$ models the time delay of the systolic peak; $\bm{\mu}^f_1$ models the flow rate at the beginning of systole; $\bm{\mu}^f_2$ models the peak systolic flow; $\bm{\mu}^f_3$ models the flow rate at the end of systole; $\bm{\mu}^f_4$ models the partition of flow between the two inlets $\Gamma_{IN}^1$, $\Gamma_{IN}^2$. On $\Gamma_{OUT}^1$, we impose homogeneous Neumann BCs. This test case features Reynolds' numbers up to $8 \cdot 10^3$.
\end{itemize}

\begin{figure}[t]
	\includegraphics[scale=0.235]{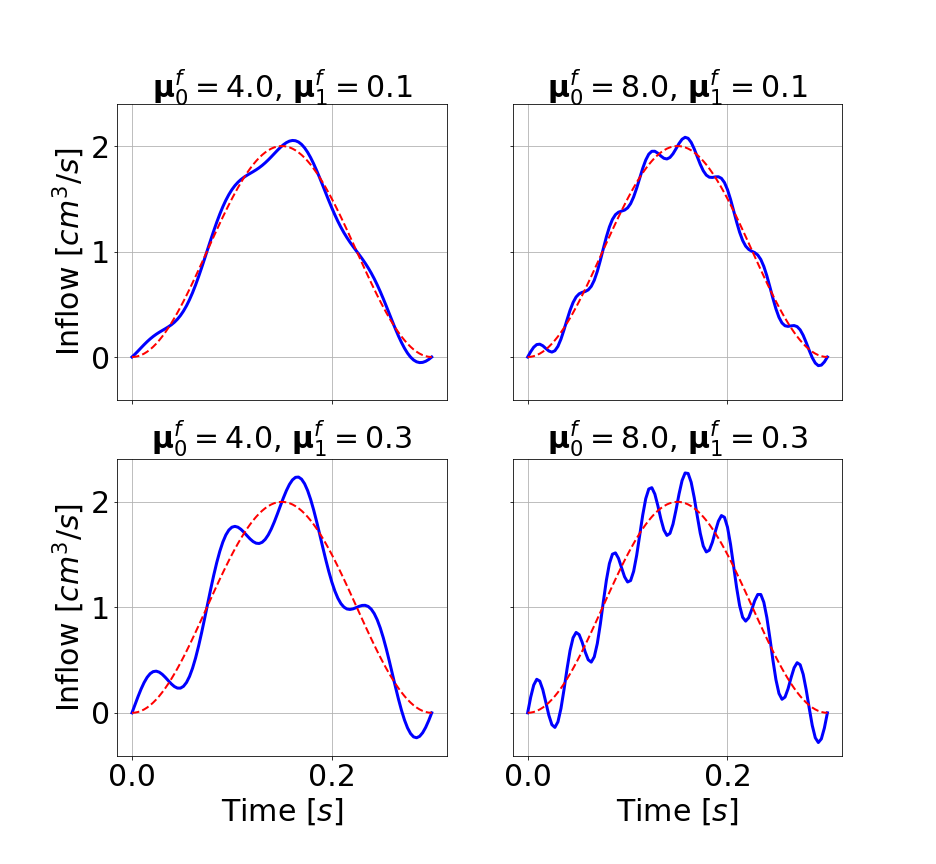}
	\hfill
	\includegraphics[scale=0.235]{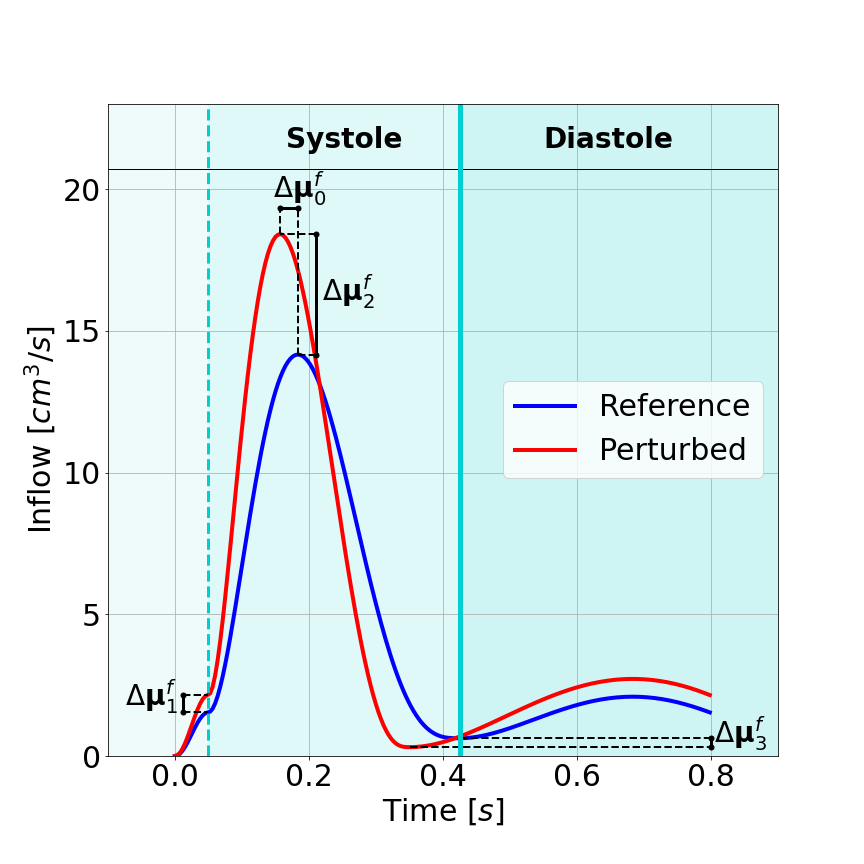}
	\caption{Plots of the parametrized reference flow rate functions $g_{ref}^t(t; \bm{\mu}^f)$ for the two considered test cases. In particular: (left) flow rate of the Symmetric Bifurcation test case for 4 different parameter values, at the extrema of the parameter domain; %$\mathcal{D}_f$; 
	(right) flow rate of the Femoropopliteal Bypass test case for parameters $\bm{\mu}^f_0=0.107, \ \bm{\mu}^f_1=2.157, \ \bm{\mu}^f_2=18.409, \ \bm{\mu}^f_3=0.313$ (in red), compared to the flow rate imposed in \cite{colciago2014comparisons, colciago2018reduced} (parameters: $\bm{\mu}^f_0=0.134, \ \bm{\mu}^f_1=1.541, \ \bm{\mu}^f_2=14.46, \ \bm{\mu}^f_3=0.626$) (in blue).}
	\label{fig: flow rates}
\end{figure}

As discussed in Subsection \ref{subs: discretization_stokes}, we imposed non--homogeneous Dirichlet BCs weakly, using Lagrange multipliers. Their space is discretized by means of orthonormal basis functions, built from Chebyshev polynomials. We considered polynomials up to the degree $n_{in}=5$ to impose inlet BCs, in order to get a good approximation of parabolic velocity profiles. Conversely, we chose $n_{out}=0$ to impose outlet BCs in the Bifurcation test case, since we are only interested in enforcing the outflow rate. We remark that in this work we use the \emph{cgs} (centimeter--gram--second) unit system; therefore, the velocity is expressed in $(cm/s)$ and the pressure in $(dyn/cm^2)$, where $dyn := g \cdot cm \cdot s^{-2}$. Concerning the computational environment, all simulations were run on the \emph{Scientific IT and Application Support (SCITAS)} clusters\footnote{\url{https://www.epfl.ch/research/facilities/scitas/hardware/}} at EPFL.

In both test cases, we generated $N_{\bm{\mu}} = 100$ training snapshots by solving the FOM problem (see Subsection \ref{subs: discretization_stokes}) for $100$ different parameter values, suitably sampled from $\mathcal{D} := \mathcal{D}_f \times \mathcal{D}_c$. We denote with $\mathcal{D}^{train} \subset \mathcal{D}$ the set of training parameters. To compute the FOM solutions, we employed the same computational framework of \cite{pegolotti2021model}, which is based on \emph{LifeV}, a C++ FE library with support to high--performance computing \cite{bertagna2017lifev}. We employed P2--P1 Taylor--Hood Lagrangian finite elements for the discretization of the velocity and pressure subspaces, respectively. We adopted BDF2 as time integrator; we remark that the space--time full--order linear system in Eq.\eqref{eq: monolitic_FOM_blocks_matrix} has not been explicitly assembled. The sparse linear systems arising at each time step have been solved using the preconditioned GMRES method with the saddle point block preconditioner proposed in \cite{pegolotti2021model}. Concerning the temporal velocity basis enrichment, we set $\varepsilon^t=0.9$, unless otherwise specified (see Algorithm \ref{alg: pressure time supremizers}).

For each test case, we performed $N_{\bm{\mu}}^* = 20$ additional FOM simulations in order to evaluate the performances of the proposed ROMs. We define $\mathcal{D}^{test} := \{\bm{\mu}_i^*\}_{i=1}^{N_{\bm{\mu}}^*}$ as the set of test parameters, sampled from $\mathcal{D}$ and such that $\mathcal{D}^{test} \cap \mathcal{D}^{train} = \emptyset$.
The performances of the proposed ROMs are assessed both in terms of accuracy and of computational efficiency. To evaluate the former, we consider the average (over the test snapshots) relative errors on velocity and pressure, measured in the norms induced by the symmetric and positive definite matrices $\bm{X}_u^{st}$ and $\bm{X}_p^{st}$ (see Eq.\eqref{eq: ST norms matrices}), respectively. Therefore, we define:
\begin{equation}
\label{eq: error definitions}
E_u = \frac{1}{N_{\bm{\mu}}^*}\overset{N_{\bm{\mu}}^*}{\underset{i=1}{\sum}}\dfrac{||\bm{\Pi}^u \widehat{\bm{u}}(\bm{\mu}_i^*) - \bm{u}^{st}_h(\bm{\mu}_i^*)||_{\bm{X}_u^{st}}}{||\bm{u}^{st}_h(\bm{\mu}_i^*) ||_{\bm{X}_u^{st}}};
\qquad 
E_p = \frac{1}{N_{\bm{\mu}}^*}\overset{N_{\bm{\mu}}^*}{\underset{i=1}{\sum}}\dfrac{||\bm{\Pi}^p \widehat{\bm{p}}(\bm{\mu}_i^*) - \bm{p}^{st}_h(\bm{\mu}_i^*)||_{\bm{X}_p^{st}}}{||\bm{p}^{st}_h(\bm{\mu}_i^*) ||_{\bm{X}_p^{st}}};
\end{equation}
being $[\widehat{\bm{u}}(\bm{\mu}_i^*), \widehat{\bm{p}}(\bm{\mu}_i^*), \widehat{\bm{\lambda}}(\bm{\mu}_i^*)] \in \mathbb{R}^{n^{st}}$ the space--time reduced solution obtained with the considered ROM for the parameter value $\bm{\mu}_i^* \in \mathcal{D}^{test}$ and $[\bm{u}_h^{st}(\bm{\mu}_i^*), \bm{p}_h^{st}(\bm{\mu}_i^*), \bm{\lambda}_h^{st}(\bm{\mu}_i^*)] \in \mathbb{R}^{N^{st}}$ the corresponding FOM solution. Notice that Eq.\eqref{eq: error definitions} applies also to the SRB--TFO method, where no dimensionality reduction in time takes place, by setting the temporal reduced bases as equal to the canonical one. Accuracy is then expressed by the ratios $E_u/\varepsilon_u$, $E_p/\varepsilon_p$, where $\varepsilon_u, \varepsilon_p$ are, respectively, the velocity and pressure POD tolerances in space and in time; these values being close to $1$ means that the ST--RB method is working somehow optimally. Concerning computational efficiency, we consider two different indicators, namely (1) the \emph{speedup} (SU), defined as the ratio between the average wall--time of FOM and of ROM simulations; (2) the \emph{reduction factor} (RF), computed as the ratio between the FOM DOFs ($N^{st}$) and the ROM DOFs ($n^{st}$).

\begin{table}[t]
	\centering
	\begin{tabular}{c c c c c c} 
		\toprule
		
		&\multicolumn{3}{c}{\textbf{Space DOFs}}
		&\multicolumn{1}{c}{\textbf{Time DOFs}} & \\
		
		\cmidrule(lr){2-4} \cmidrule(lr){5-5} 
		
		\multicolumn{1}{c}{\textbf{Test case}}
		&\multicolumn{1}{c}{\textbf{$\bm{N_u^s}$}}
		&\multicolumn{1}{c}{\textbf{$\bm{N_p^s}$}}
		&\multicolumn{1}{c}{\textbf{$\bm{N_\lambda}$}}
		&\multicolumn{1}{c}{\textbf{$\bm{N^t}$}}
		&\multicolumn{1}{c}{\textbf{Wall--time}} \\
		
		\midrule
		
		\textbf{Bifurcation}  & 76974 & 3552 & 66  & 120 & 1936 \si{\second}  \\ 
		\textbf{Bypass}  & 234936 &  10158 & 126 & 170 & 9300 \si{\second}  \\ 
		\bottomrule 
	\end{tabular}
	\caption{Space and time FOM DOFs for velocity, pressure and Lagrange multipliers and average wall--time of a full--order simulation in the two test cases.}
	\label{tab: fem info}
	
	\begin{tabular}{c c c c c c c c} 
		\toprule
		
		& &\multicolumn{5}{c}{\textbf{Bases construction}} & \\
		
		\cmidrule(lr){3-7} 
		
		\multicolumn{1}{c}{\textbf{Test case}}
		&\multicolumn{1}{c}{\textbf{Method}}
		&\multicolumn{2}{c}{\textbf{$\bm{u}$}}
		&\multicolumn{2}{c}{\textbf{$\bm{p}$}}
		&\multicolumn{1}{c}{\textbf{$\bm{\lambda}$}}
		&\multicolumn{1}{c}{\textbf{Assembling}} \\
		
		\cmidrule(lr){3-4}\cmidrule(lr){5-6}\cmidrule(lr){7-7}
		
		& &\textbf{Space} & \textbf{Time} &\textbf{Space} & \textbf{Time} & \textbf{Time} & \\
		
		\midrule
		
		\textbf{Bifurcation} & \textbf{SRB--TFO} & 930 \si{\second} & // & 7 \si{\second} & // & // & 5 \si{\second}\\ 
		& \textbf{ST--GRB} & 930 \si{\second} & 4 \si{\second} & 7 \si{\second} & $<1$ \si{\second} & $<1$ \si{\second} & 6 \si{\second} \\ 
		& \textbf{ST--PGRB} & 468 \si{\second} & 2 \si{\second} & 7 \si{\second} & $<1$ \si{\second} & $<1$ \si{\second} & 5 \si{\second} \\
		
		\textbf{Bypass} & \textbf{SRB--TFO} & 3216 \si{\second} & // & 28 \si{\second} & // & // & 29 \si{\second} \\ 
		& \textbf{ST--GRB} & 3216 \si{\second} & 29 \si{\second} & 28 \si{\second} & $<1$ \si{\second} & $<1$ \si{\second} & 30 \si{\second} \\ 
		& \textbf{ST--PGRB} & 1303 \si{\second} & 9 \si{\second} & 28 \si{\second} & $<1$ \si{\second}  & $<1$ \si{\second} & 20 \si{\second} \\
		
		\bottomrule 
	\end{tabular}
	\caption{Offline computational cost, expressed in terms of wall--time (in \si{\second}), of the SRB--TFO, ST--GRB and ST--PGRB methods in the two test cases for $\varepsilon_u=\varepsilon_p=\varepsilon_\lambda=10^{-3}$.}
	\label{tab: offline phase info}
\end{table}

Table~\ref{tab: fem info} reports the dimensionality of the full--order problem and the wall--time of a single high--fidelity simulation for the two considered test cases. Based on the numerical tests carried out in \cite{pegolotti2021model}, we choose a timestep size $\delta = 2.5 \cdot 10^{-3}$ \si{s} in both test cases, from which the reported values of $N^t$ follow. Convergence tests proved that such a choice does not lead to an overrefinement of the temporal grid; hence, ST--RB methods are not artificially favoured over full--order in time ones.
Table~\ref{tab: offline phase info} reports the offline computational cost of the reduced--order simulations for the three considered methods and in the two test cases.  Firstly, we highlight that all computational times in Table~\ref{tab: offline phase info} are a small fraction of a single FOM solve wall--time (see Table~\ref{tab: fem info}); thus snapshot generation configures as the actual bottleneck of the offline stage for all methods. Neglecting the FOM solves, the computation of the velocity reduced basis in space is by far the most expensive operation. Indeed, it involves the SVD of a very large matrix and, for the SRB--TFO and ST--GRB methods, also the supremizers enrichment procedure. The non--negligible cost of the latter makes ST--PGRB the most efficient approach in both test cases. Concerning the reduced bases, we notice that computing the temporal modes is much faster than extracting the spatial ones. This is a direct consequence of exploiting dimensionality reduction in space prior to capturing the most relevant dynamics of the problem. For the ST--GRB method, we remark that the temporal stabilizers enrichment takes $\approx 2$~\si{\second} for the Symmetric Bifurcation test case and $\approx 20$~\si{\second} for the Femoropopliteal Bypass one. Stabilization in time is then much faster than stabilization in space, since the number of temporal FOM DOFs is significantly lower than the one of spatial FOM DOFs. Finally, we underline that the cost of model order reduction in space is dominant also for what concerns the (offline) computation of parameter--independent quantities. Indeed, the assembling wall--time for the ST--GRB method is only $\approx 1$~\si{\second} higher than the one for the SRB--TFO method, even though the latter involves only spatial projections.

\subsection{Symmetric Bifurcation}\label{subs:results bif}

Table~\ref{tab: bif_results} reports the results obtained with the SRB--TFO, ST--GRB and ST--PGRB  methods on the Symmetric Bifurcation test case, for $\varepsilon_u=10^{-3}$, $\varepsilon_p=\varepsilon_\lambda=10^{-4}$. Different enrichments of the velocity reduced basis in time are considered. For each test, we indicate the dimension (in space and in time) of the velocity reduced basis, the \emph{reduction factor}, the \emph{speedup} and the average (normalized) relative test errors on velocity and pressure (see Eq.\eqref{eq: error definitions}). Figure \ref{fig:bifurcation_plots} reports the line integral convolution on the median slice for the velocity fields obtained for $\bm{\mu}^* = [7.85,0.14,0.59,45.2,877]$ with the three considered ROMs at $t=0.15$~\si{\second} (top row) and the corresponding absolute pointwise errors with respect to the FOM solution (bottom row).

\begin{table}[t!]
	\centering
	\begin{tabular}{ccccccc}
		\toprule
		
		\multicolumn{3}{c}{\large{\textbf{Method}}}&
		\multicolumn{1}{c}{\large\textbf{ROM size}}&
		\multicolumn{1}{c}{\large\textbf{Efficiency}}&
		\multicolumn{2}{c}{\large\textbf{Error}}\\	
		
		\cmidrule(lr){1-3} \cmidrule(lr){4-4} \cmidrule{5-5} \cmidrule(lr){6-7}
		
		&
		\textbf{T-sup} &
		$\bm{\varepsilon^t}$ &
		$\bm{(n_u^s,n_u^t)}$ & 
		\textbf{SU}& 
		$\bm{E_u / \varepsilon_u}$ & 
		$\bm{E_p / \varepsilon_p}$ \\
		
		\midrule
		
		\textbf{SRB--TFO} &\textbf{//} &\textbf{//} &(156,120) &9.37e2  &$1.00$ & $1.36$ \\
		\textbf{ST--GRB} &\textbf{//}  &\textbf{//} &(156,17) &1.69e3  &1.11e17 & 2.93e40 \\
		&\textbf{P} &\textbf{0.6} &(156,$23^{*_6}$) &1.06e3  & 1.01 & 1.82  \\
		& &\textbf{0.9} &(156,$23^{*_6}$) &1.05e3 &1.01 & 2.42  \\
		&\textbf{L} &\textbf{0.6} &(156,$23^{*_6}$) &1.04e3  & 1.01 & 2.64 \\
		& &\textbf{0.9} &(156,$23^{*_6}$)  & 1.05e3 & 1.02 & 2.80 \\          
		\textbf{ST--PGRB} &\textbf{//}  &\textbf{//} &(66,17) & 3.62e3  &3.35 & 40.9 \\
		&\textbf{P} &\textbf{0.6} &(66,$23^{*_6}$) & 2.64e3  &1.29 & 2.52 \\
		& &\textbf{0.9} &(66,$23^{*_6}$) & 2.62e3 &1.28 & 3.02 \\
		
		\bottomrule
	\end{tabular}%
	\caption{Dimensions of the velocity reduced bases, speedups and errors obtained on the Symmetric Bifurcation test case for different enrichments of the velocity temporal basis, setting $\varepsilon_u=10^{-3}$, $\varepsilon_p=\varepsilon_\lambda=10^{-4}$. Notation: ``//'' means that no enrichment has been performed, ``P'' that enrichment has been performed with respect to pressure modes, ``L'' that enrichment has been performed with respect to Lagrange multipliers modes. The notation $(\cdot)^{*_n}$ indicates that the velocity temporal basis has been enriched with $n$ modes.}
	\label{tab: bif_results}
	
	\includegraphics[width=0.73\textwidth]{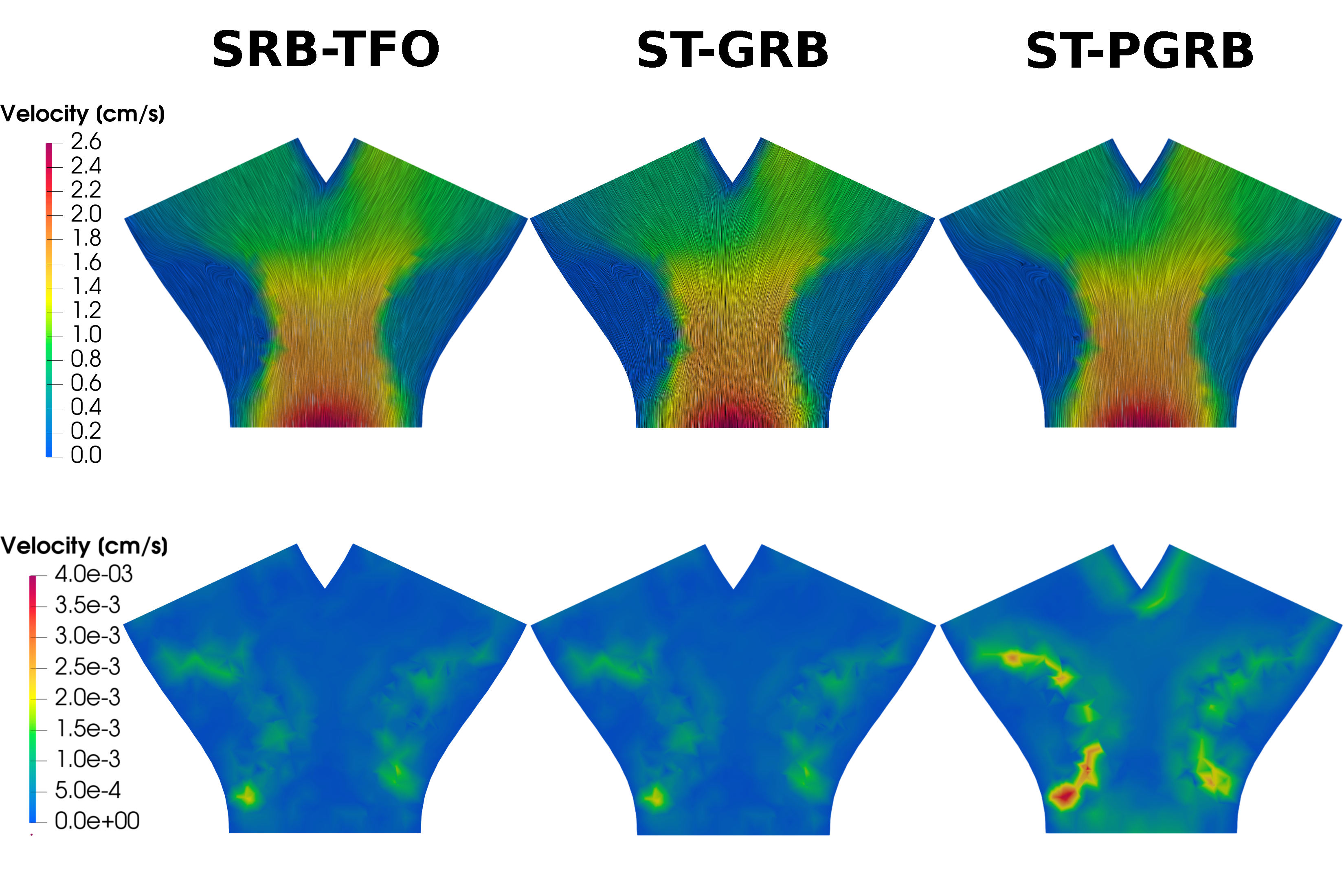}
	\captionof{figure}{Surface line integral convolution of the velocity field on the median slice (top) and corresponding absolute pointwise errors with respect to the FOM solution (bottom), achieved in the Symmetric Bifurcation test case with the three considered ROMs for $\varepsilon_u=10^{-3}$, $\varepsilon_p=\varepsilon_\lambda=10^{-4}$, with $\bm{\mu}^*=[7.85, 0.14, 0.59, 45.2, 877]$ and at $t=0.15$ \si{\second}.}
	\label{fig:bifurcation_plots}
\end{table}

The impact of dimensionality reduction is evident. Indeed, all methods realize significant \emph{SU} with respect to the FOM and attain accuracies of the order of the prescribed POD tolerances. We notice that the efficiency is improved upon dimensionality reduction in time. Indeed, all ST--RB methods are more than 1000 times faster than the FOM, while the SRB--TFO method (implemented by iteratively solving $N^t$ small linear systems, using the BDF2 time marching scheme) is slower. However, the efficiency gain due to dimensionality reduction in time is not dramatic, since the number of timesteps ($N^t=120$) is small compared to the number of spatial DOFs (see Table \ref{tab: fem info}). More significant efficiency gains are expected if a higher number of temporal DOFs is considered (see Subsection \ref{subs:results bypass}). The ST--PGRB method is faster than the ST--GRB one, thanks to its ``automatic'' \emph{inf--sup} stability property (see Subsection \ref{subs: ST--PGRB stability}). This prevents from the enrichment of the velocity reduced bases (in space and in time) and it ultimately leads to solve a smaller linear system. The drawback of such an increased computational efficiency is represented by a slight loss in accuracy. Indeed, while the average relative test errors for the SRB--TFO method are of the order of the POD tolerance, the ones got with the two ST--RB methods are larger, particularly for the pressure field. 

Setting $\varepsilon_u = 10^{-3}$ and $\varepsilon_p = \varepsilon_\lambda = 10^{-4}$, both dual fields feature temporal reduced bases that are larger than the primal one. As a consequence, the matrices $\bm{\Psi}^{u,p}$, $\{\bm{\Psi}^{u, \lambda_k}\}_{k=1}^{N_D}$ cannot be full column rank and --- according to Corollaries \ref{cor: corollary 1} -- \ref{cor: corollary 2} --- the velocity temporal basis enrichment is compulsory in order to retain \emph{inf--sup} stability with the ST--GRB method. Indeed, if the velocity temporal reduced basis is not augmented, the linear system stemming from the application of the ST--GRB method is ill--conditioned and consequently the errors (on both velocity and pressure) explode. Conversely, if suitable modes are added to the velocity temporal basis, the problem is stabilized and the errors are comparable with the prescribed POD tolerances. Three remarks are worth to follow. Firstly, the velocity temporal stabilizers obtained for $\varepsilon^t=\varepsilon^t_1$ are not necessarily a subset of those computed considering $\varepsilon^t=\varepsilon^t_2 > \varepsilon^t_1$. For instance, in this case the temporal modes that are added choosing $\varepsilon^t=0.6$ and $\varepsilon^t=0.9$, although being $6$ in both cases, are different. Secondly, the order in which the dual fields (i.e. pressure and Lagrange multipliers) are considered for the velocity temporal basis enrichment does not significantly affect the results. Lastly, the ST--PGRB method, despite being roughly $3$ times faster, exhibits higher errors than the (stabilized) ST--GRB one. In particular, the relative error on pressure is one order of magnitude larger than the prescribed POD tolerance. Nonetheless, the last two rows of Table~\ref{tab: bif_results} show that accuracies comparable with the ones of ST--GRB are retrieved by enriching the velocity temporal reduced basis (according to Algorithm \ref{alg: pressure time supremizers}), without severely impacting the efficiency of the method. For instance, the addition of $6$ temporal stabilizers via Algorithm \ref{alg: pressure time supremizers} (with $\varepsilon^t = 0.6$ and considering pressure as dual field) leads to a significant drop in the relative error for both velocity (from $3.35$ to $1.29$, $-61\%$) and pressure (from $40.9$ to $2.52$, $-94\%$).

\subsection{Femoropopliteal Bypass}\label{subs:results bypass}
\afterpage{
	\begin{table}[t!]
		\centering
		
		\begin{tabular}{c c c c c c c c c}
			\toprule
			& &
			\multicolumn{3}{c}{\large\textbf{ROM size}}&
			\multicolumn{2}{c}{\large\textbf{Efficiency}}&
			\multicolumn{2}{c}{\large\textbf{Accuracy}}\\
			
			\cmidrule(lr){3-5} \cmidrule(lr){6-7} \cmidrule(lr){8-9}
			
			&
			$\bm{\varepsilon}$ & 
			\multicolumn{1}{c}{$\bm{(n_u^s,n_u^t)}$} & 
			\multicolumn{1}{c}{$\bm{(n_p^s,n_p^t)}$} &
			\multicolumn{1}{c}{$\bm{(\{N_{\lambda_k}\},\{n_{\lambda_k}^t\})}$} &
			\multicolumn{1}{c}{\textbf{RF}} & 
			\multicolumn{1}{c}{\textbf{SU}} & 
			\multicolumn{1}{c}{$\bm{E_u / \varepsilon_u}$} & 
			\multicolumn{1}{c}{$\bm{E_p / \varepsilon_p}$} \\
			
			\midrule
			
			\multirow{3}{*}{\rotatebox[origin=c]{90}{\textbf{\footnotesize SRB--TFO}}} & $\bm{10^{-3}}$ &(184,170) &(7,170) &(\{63,63\},\{170,170\}) &7.74e2 &2.11e3 &11.76 &2.40 \\
			& $\bm{10^{-4}}$ &(220,170) &(14,170) &(\{63,63\},\{170,170\}) &6.81e2 &1.79e3 &14.59 &2.70  \\
			& $\bm{10^{-5}}$ &(303,170) &(31,170) &(\{63,63\},\{170,170\}) &5.33e2 &1.55e3 &12.05 &8.13 \\
			
			\midrule
			
			\multirow{3}{*}{\rotatebox[origin=c]{90}{\textbf{\footnotesize ST--GRB}}} & $\bm{10^{-3}}$ &(184,$10^{*_3}$) &(7,9) &(\{63,63\},\{8,8\}) &1.43e4 &2.18e4 &12.74 &4.29 \\
			& $\bm{10^{-4}}$ &(220,$12^{*_2}$) &(14,11) &(\{63,63\},\{11,11\}) &9.97e3 &8.19e4 &14.91 &3.17  \\
			& $\bm{10^{-5}}$ &(303,$16^{*_4}$) &(31,14) &(\{63,63\},\{14,14\}) &5.92e3 &2.39e3 &10.02 &9.20 \\
			
			\midrule
			
			\multirow{3}{*}{\rotatebox[origin=c]{90}{\textbf{\footnotesize ST--PGRB \hspace{0.5cm}}}} & $\bm{10^{-3}}$ &(51,7) &(7,9) &(\{63,63\},\{8,8\}) &2.92e4 &1.17e5 &16.70 &8.22 \\
			& &(51,$10^{*_3}$) &(7,9) &(\{63,63\},\{8,8\}) &2.64e4 &9.17e4 &12.28 &5.48 \\
			& $\bm{10^{-4}}$ &(87,10) &(14,11) &(\{63,63\},\{11,11\}) &1.73e4 &3.19e4 &11.95 &11.27 \\
			& &(87,$12^{*_2}$) &(14,11) &(\{63,63\},\{11,11\}) &1.61e3 &2.68e4 &9.71 &10.79  \\
			& $\bm{10^{-5}}$ &(146,12) &(31,14) &(\{63,63\},\{14,14\}) &1.06e4 &9.00e3 &21.98 &19.34 \\
			& &(146,$16^{*_4}$) &(31,14) &(\{63,63\},\{14,14\}) &9.19e3 &6.00e3 &13.45 &11.60 \\
			\bottomrule 
		\end{tabular}%
		\caption{Summary of the results obtained with the ST--RB methods (top: ST--GRB, bottom: ST--PGRB) on the Femoropopliteal Bypass test case, for different POD tolerances $\varepsilon$. In particular: (left) number of spatial and temporal reduced basis elements for velocity, pressure and Lagrange multipliers; (center) RF and average SU; (right) normalized average test relative errors on velocity and pressure.}
		\label{tab:bypass ST--RB}
		
		\includegraphics[width=0.495\textwidth]{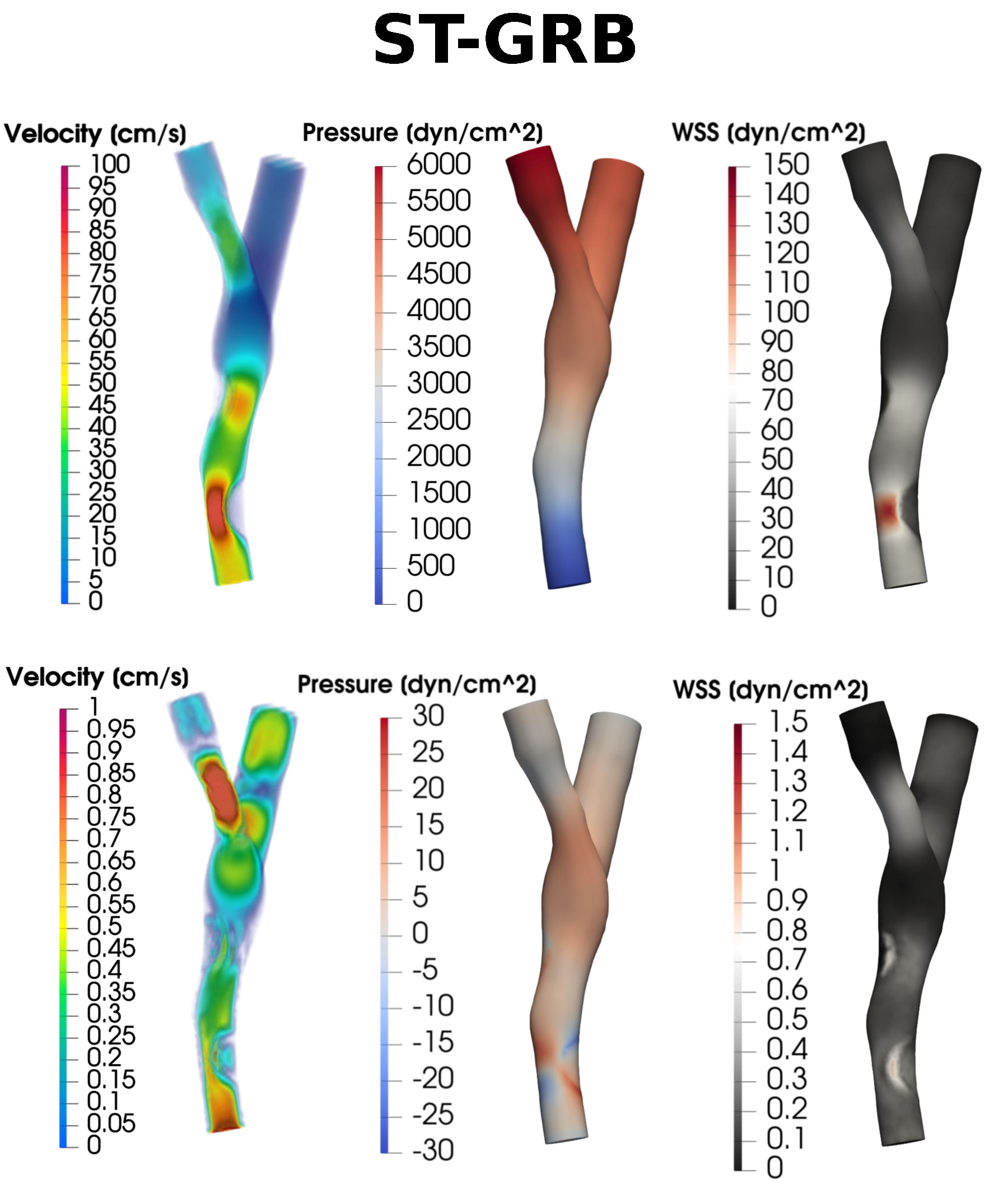}
		\hfill
		\includegraphics[width=0.495\textwidth]{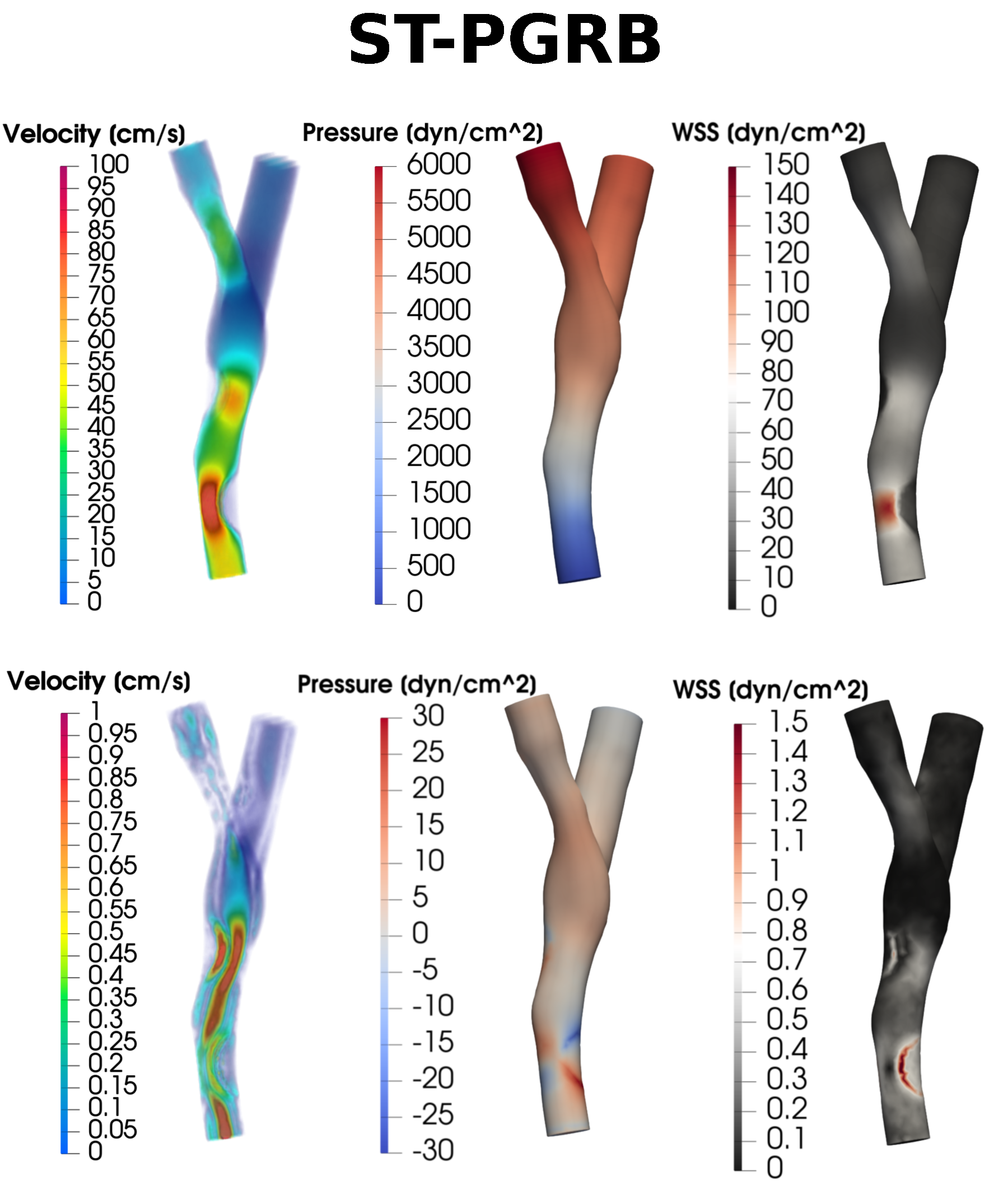}
		\captionof{figure}{Magnitudes of the velocity, pressure and WSS fields (top) and corresponding absolute pointwise errors with respect to the FOM solution (bottom), achieved in the Femoropopliteal Bypass test case with the ST--GRB method (left) and ST--PGRB method (right) for $\varepsilon=10^{-3}$, at $t=0.1075$ \si{\second} (systolic peak) and with $\bm{\mu}^*=[0.130, 1.604, 15.025, 0.714, 0.57, 536, 846, 0, 23.3]$.}
		\label{fig:bypass_plots}	
	\end{table}
}

We now assess the performances of the two considered ST--RB methods on a patient--specific Femoropopliteal Bypass geometry \cite{marchandise2012quality}. Table~\ref{tab:bypass ST--RB} reports the results obtained with the ST--GRB and ST--PGRB methods for three different POD tolerances. The same POD tolerance $\varepsilon \in \mathbb{R}^+$ has been chosen for all the fields, both in space and in time. Figure \ref{fig:bypass_plots} shows the magnitudes of the velocity, pressure and wall shear stress (WSS) fields for $\varepsilon = 10^{-3}$, at $t = 0.1075$~\si{\second} (systolic peak) and for $\bm{\mu}^*=[0.130, 1.604, 15.025, 0.714, 0.57, 536, 846, 0, 23.3]$ obtained with the ST--GRB (left) and ST--PGRB (right) method (top row) and the corresponding absolute pointwise errors with respect to the FOM solution (bottom row).

The performances of both ST--RB methods are good. Indeed, they both realize significant speedups with respect to the FOM and relative errors are approximately one order of magnitude higher than the prescribed POD tolerance. Moreover, the computational gain with respect to the baseline SRB--TFO approach is more evident than for the Symmetric Bifurcation test case, since the number of timesteps is higher ($N^t=170$ vs. $N^t=120$). For instance, for $\varepsilon=10^{-3}$, the ST--GRB and ST--PGRB methods are roughly $10$ and $55$ times faster than the baseline. Nevertheless, we remark that all methods feature larger errors compared to the first test case. We think this is mainly due to the increase of the Reynolds' numbers (maximal values at the systolic peak: ${\approx8\cdot10^3}$ vs. ${\approx1.5\cdot10^2}$), which hinders the approximation quality of the reduced subspaces. As in the Symmetric Bifurcation test case, the ST--PGRB method is faster that the ST--GRB one thanks to its ``automatic'' \emph{inf--sup} stability, which prevents from enriching the velocity reduced bases. However, velocity temporal basis enrichment is necessary in order to retain accuracies comparable with the one of ST--GRB (stabilized). 

\section{Conclusions}
\label{sec:section5}
In this work, we discussed the application of space--time reduced basis methods to the unsteady incompressible Stokes equations in fixed 3D geometries, endowed with a reaction term in order to model the presence of blood clots. We supposed the parametric dependency to characterize the blood clots densities and the inhomogeneous Dirichlet BCs, weakly imposed by means of Lagrange multipliers. As a result, the problem at hand features a twofold saddle point structure. Upon detailing the application of ST--RB methods, we focused on the well--posedness of the resulting problem, which resorts to \emph{inf--sup} stability analysis. To this aim, we proposed two different approaches. The first one, called ST--GRB, involves a Galerkin projection and it is characterized by a suitable enrichment of the spatio--temporal velocity reduced basis. The second one, called ST--PGRB, features instead a Petrov--Galerkin projection, stemming from the minimization of the FOM residual in a suitable norm, so that \emph{inf--sup} stability can be ``automatically'' guaranteed. Both methods showed efficiency gains with respect to the baseline space--reduced approach and accuracies in accordance with theoretical expectations.

Two main limitations can be identified. On the one hand, we focused on a linear problem, neglecting the non--linear convective term that characterizes the Navier--Stokes equations and whose role is of primary importance for the realistic modelling of h{\ae}modynamics. On the other hand, we considered fixed geometries, while one major challenge in patient--specific model order reduction is to efficiently deal with intra--patient and inter--patient geometrical variability. Therefore, we plan to extend the proposed ST--RB approaches to the incompressible unsteady Navier--Stokes equations in parametrized 3D geometries, possibly leveraging modular geometrical approximation \cite{pegolotti2021model}. Furthermore, we envision the use of RFSI models --- as the Coupled Momentum model \cite{figueroa2006coupled, colciago2014comparisons, colciago2018reduced} --- and of physiological BCs \cite{vignon2006outflow, kim2010patient} --- in order to bridge the gap with clinical applications.

\bibliographystyle{unsrtnat}
\bibliography{main_arxiv_V2}

\newpage
\appendix

\section{\textit{ST--GRB} method: details of the assembling phase} \label{appendix: ST--RB assembling}

\justifying

In this appendix, we provide a detailed explanation of the assembling of the left--hand side matrix $\widehat{\bm{A}}^{st}$ and of the right--hand side vector $\widehat{\bm{F}}^{st}$ in Eq.\eqref{eq: compact space time reduced system}, in the case of a Galerkin projection (i.e. $\bm{\Pi} = \mytilde{\bm{\Pi}}$).

Firstly, let us consider the left--hand side matrix $\widehat{\bm{A}}^{st}$. In order to compute its five parameter--independent non--zero blocks (see Eq.\eqref{eq: reduced A}), we can exploit the properties of the spatio--temporal reduced basis $\bm{\Pi}$.  For instance, let us compute the value of $(\widehat{\bm{A}}^{st}_1)_{\ell m}$ for a couple of indexes $(\ell m) \in \{1,\cdots,n^{st}_u\} \times \{1,\cdots,n^{st}_u\}$, such that $\ell = \mathcal{F}_u(\ell_s, \ell_t)$ and $m = \mathcal{F}_u(m_s, m_t)$, with $\ell_s,m_s \in \{1,\cdots n_u^s\}$, $\ell_t,m_t \in \{1,\cdots n_u^t\}$. We have that
\begin{equation}
\label{eq: reduced A1}
\begin{split}
&(\widehat{\bm{A}}^{st}_1)_{\ell m} = (\bm{\pi}^u_\ell)^T\bm{A}^{st}_1\bm{\pi}^u_m = \left(\bm{\Phi}_{\ell_s}^u \otimes \bm{\Psi}_{\ell_t}^u\right)^T \bm{A}_4^{st} \left(\bm{\Phi}_{m_s}^u \otimes \bm{\Psi}_{m_t}^u\right)
\\ &=  
\left(\bm{\Phi}_{\ell_s}^u \otimes \bm{\Psi}_{\ell_t}^u\right)^T
\begingroup % keep the change local
\setlength\arraycolsep{3pt}
\begin{bmatrix*}[c] 
& &+ (\bm{M} + \frac23\delta\bm{A})\bm{\phi}^u_{m_s}(\bm{\psi}^u_{m_t})_1 \\
&- \frac43\bm{M}\bm{\phi}^u_{m_s}(\bm{\psi}^u_{m_t})_1 
&+ (\bm{M} + \frac23\delta\bm{A})\bm{\phi}^u_{m_s}(\bm{\psi}^u_{m_t})_2 \\ 
\frac13\bm{M}\bm{\phi}^u_{m_s}(\bm{\psi}^u_{m_t})_1 
&- \frac43\bm{M}\bm{\phi}^u_{m_s}(\bm{\psi}^u_{m_t})_2 
&+ (\bm{M} + \frac23\delta\bm{A})\bm{\phi}^u_{m_s}(\bm{\psi}^u_{m_t})_3  \\
\vdots & \vdots & \vdots \\
\frac13\bm{M}\bm{\phi}^u_{m_s}(\bm{\psi}^u_{m_t})_{N^t-2} 
&- \frac43\bm{M}\bm{\phi}_{m_s}(\bm{\psi}^u_{m_t})_{N^t-1}
&+ (\bm{M} + \frac23\delta\bm{A}) \bm{\phi}^u_{m_s}(\bm{\psi}_{m_t})_{N^t}
\end{bmatrix*}
\endgroup
\\
&=
\left(\widehat{\bm{M}}+\frac23\delta\widehat{\bm{A}}\right)_{\ell_s m_s}\delta_{\ell_t,m_t}
-\dfrac{4}{3}\widehat{\bm{M}}_{\ell_s m_s}(\bm{\psi}^u_{\ell_t})_{2:}^T(\bm{\psi}^u_{m_t})_{:-1}
+\dfrac{1}{3}\widehat{\bm{M}}_{\ell_s m_s}(\bm{\psi}^u_{\ell_t})_{3:}^T(\bm{\psi}^u_{m_t})_{:-2}~,
\end{split}
\end{equation}
where $\widehat{\bm{M}}$ and $\widehat{\bm{A}} = (\bm{\Phi}^u)^T\bm{A}\bm{\Phi}^u$ are the space--reduced mass and stiffness matrices, respectively (see Eq.\eqref{eq: ROM matrices}). Furthermore, $\delta_{i,j}$ is the Kronecker Delta function, which appears in Eq.\eqref{eq: reduced A1} since the columns of $\bm{\Psi}^u$ are orthonormal in Euclidean norm by construction, and the notations $\bm{v}_{i:}$, $\bm{v}_{:-j}$ denote the sub--vectors of a given vector $\bm{v}$ containing all the entries from the $i$--th to the last one and from the first one to the $j$--th from last, respectively. Similar expressions characterize the other four parameter--independent non--zero blocks of $\widehat{\bm{A}}^{st}$. Thus, the assembly of the parameter--independent part of the reduced left--hand side matrix only requires the spatial projection of the FOM matrices and some trivial multiplications between elements of the reduced bases in time. In particular, recalling the definition of the space--reduced matrices in Eq.\eqref{eq: ROM matrices} and of the time--reduced matrices in Eq.\eqref{eq: temporal bases combined}, the parameter--independent blocks of $\widehat{\bm{A}}^{st}$ (other than $\widehat{\bm{A}}^{st}_1$, already defined in Eq.\eqref{eq: reduced A1}) read as:
\begin{equation}
\label{eq: reduced blocks A}
\begin{alignedat}{2}
\widehat{\bm{A}}^{st}_2 \in \mathbb{R}^{n_u^{st} \times n_p^{st}}: \ \left(\widehat{\bm{A}}^{st}_2\right)_{\ell k} &= \frac23 \delta \widehat{\bm{B}}^T_{\ell_s k_s} \bm{\Psi}_{\ell_t k_t}^{u,p} 
\qquad 
\widehat{\bm{A}}^{st}_4 \in \mathbb{R}^{n_p^{st} \times n_u^{st}}: \ \left(\widehat{\bm{A}}^{st}_4\right)_{k m} &= \widehat{\bm{B}}_{k_s m_s} \bm{\Psi}_{m_t k_t}^{u,p}
\\
\widehat{\bm{A}}^{st}_3 \in \mathbb{R}^{n_u^{st} \times n_\lambda^{st}}: \ \left(\widehat{\bm{A}}^{st}_3\right)_{\ell j} &= \frac23 \delta \widehat{\bm{C}}^T_{\ell_s j_s} \bm{\Psi}_{\ell_t j_t}^{u,\lambda}
\qquad 
\widehat{\bm{A}}^{st}_7 \in \mathbb{R}^{n_\lambda^{st} \times n_u^{st}}: \ \left(\widehat{\bm{A}}^{st}_7\right)_{j m} &= \widehat{\bm{C}}_{j_s m_s} \bm{\Psi}_{m_t j_t}^{u,\lambda}
\end{alignedat}
\end{equation}
for $\ell,m$ defined as in Eq.\eqref{eq: reduced A1}; $k = \mathcal{F}_p(k_s, k_t)$ with $k_s \in \{1, \dots, n_p^s\}$, $k_t \in \{1, \dots, n_p^t\}$; $j \in \mathcal{F}_\lambda(j_s, j_t)$ with $j_s \in \{1, \dots, N_\lambda\}, j_t \in \{1, \dots, _\lambda^t\}$.

Since $\mytilde{\bm{\Pi}} = \bm{\Pi}$ (so parameter--independent), the assembling of the parameter--dependent part of the left--hand side matrix only involves the computation of the velocity--velocity block $\widehat{\bm{R}}_1^{st}(\bm{\mu})$. To this aim, we can exploit the affine parametrization of the reaction term (see Eq.\eqref{eq: ROM matrix reaction ST}). Indeed, we have that
\begin{equation}
\widehat{\bm{R}}_1^{st}(\bm{\mu}) = \sum_{q=1}^{N_c} \rho_c^q (\bm{\mu}) \widehat{\bm{R}}_q^{st} \qquad \text{with} \qquad \widehat{\bm{R}}_q^{st} \in \mathbb{R}^{n_u^{st} \times n_u^{st}} \ : \ \left(\widehat{\bm{R}}_q^{st}\right)_{\ell m} = \frac23 \delta  \left(\widehat{\bm{R}}^q\right)_{\ell_s m_s} \delta_{\ell_t,m_t}~,
\end{equation}
being $\ell,m$ defined as in Eq.\eqref{eq: reduced A1} and $\{\widehat{\bm{R}}^q\}_{q=1}^{N_c}$ the space--reduced affine components of the matrix associated to the reaction term (see Eq.\eqref{eq: ROM matrix reaction}). Hence, the online computational cost of the left--hand side matrix assembling is small, since it only involves the linear combination of $N^c+1$ space--time reduced matrices. In the general case, where the left--hand side term does not feature affine parametric dependency, approximate affine decompositions can be retrieved exploiting the MDEIM algorithm. 

Finally, let us focus on the assembling of the right--hand side vector $\widehat{\bm{F}}^{st}$. Based on Eq.\eqref{eq: monolitic_FOM_blocks_vector}, we only have to compute its third block $\widehat{\bm{F}}^{st}_3(\bm{\mu}^*) = (\bm{\Pi}^\lambda)^T\bm{F}^{st}_3(\bm{\mu}^*) \in \mathbb{R}^{n^{st}_\lambda}$, as all the other ones are null. To this end, we can leverage the space--time factorization of the Dirichlet datum (see Eq.\eqref{eq: factorization_dirichlet_datum}). Indeed,  the $k$--th block of the space--time reduced right--hand side, $\widehat{\bm{F}}^{st}_{3,k}(\bm{\mu}^*)$ ($k \in \{1, \dots, N_D\}$), is given by
\begin{equation}
\label{eq: reduced RHS local}
\widehat{\bm{F}}^{st}_{3,k} = \widehat{\bm{F}}^{st}_{3,k}(\bm{\mu}^*) \in \mathbb{R}^{n^{st}_{\lambda_k}}: \ \left(\bm{\widehat{F}}^{st}_{3,k}\right)_{\mathcal{F}_{\lambda_k}(i,j)} = \left(\tilde{\bm{g}}_k^s\right)_i \left(\left(\bm{\psi}_j^{\lambda_k}\right)^T \bm{g}^t(\bm{\mu}^*)\right)~.
\end{equation}
Since the global space of Lagrange multipliers $\mathcal{L}$ is such that $\mathcal{L} = \prod_{k=1}^{N_D} \mathcal{L}_k$, we have that
\begin{equation}
\label{eq: reduced RHS global}
\widehat{\bm{F}}^{st}_{3}(\bm{\mu}^*) = \left[\left(\widehat{\bm{F}}^{st}_{3,1}(\bm{\mu}^*)\right)^T, \cdots, \left(\widehat{\bm{F}}^{st}_{3,N_D}(\bm{\mu}^*)\right)^T\right]^T \in \mathbb{R}^{n^{st}_{\lambda}}~.
\end{equation}
Notice that this assembling step is extremely cheap, as it only involves $N_D$ inner products between $N^t$--dimensional vectors.

\section{\textit{ST--PGRB} method: details of the assembling phase} \label{appendix: ST--PGRB assembling}

In this appendix, we show how the left--hand side matrix $\widehat{\bm{A}}^{pg}$ and the right--hand side vector $\widehat{\bm{F}}^{pg}$ of the linear system arising from the application of the ST--PGRB method (see Eq.\eqref{eq: linear system ST--PGRB}) can be efficiently computed, leveraging the block structure of the problem at hand and the affine parametrization of its left--hand side term. Firstly, we define the diagonal preconditioners of the spatio--temporal norm matrices for velocity, pressure and Lagrange multipliers as $\bm{P}_{\bm{X}_u}^{st}$, $\bm{P}_{\bm{X}_p}^{st}$, $\bm{P}_{\bm{X}_\lambda}^{st}$, respectively. In addition, we define the diagonal preconditioners of the spatial norm matrices for velocity, pressure and Lagrange multipliers as $\bm{P}_{\bm{X}_u}$, $\bm{P}_{\bm{X}_p}$, $\bm{P}_{\bm{X}_\lambda}$, respectively. We recall that $\bm{X}_\lambda = \bm{P}_{\bm{X}_\lambda} = \bm{I}_{N_\lambda}$. 

Let us consider the left--hand side matrix $\widehat{\bm{A}}^{pg}$. Exploiting the block structure of the FOM left--hand side matrix $\bm{A}^{st}$ and differentiating between parameter--independent and parameter--dependent components (see Eq.\eqref{eq: monolitic_FOM_system}), we have that
\begin{equation*}
\label{eq: ST--PGRB blocks structure}
\widehat{\bm{A}}^{pg} = 
\begin{bmatrix}
\widehat{\bm{A}}^{pg}_{1,1} + \widehat{\bm{A}}^{pg}_{4,4} + \widehat{\bm{A}}^{pg}_{7,7} & \widehat{\bm{A}}^{pg}_{1,2} & \widehat{\bm{A}}^{pg}_{1,3} \\
{\widehat{\bm{A}}^{pg}_{1,2}\phantom{}}^T & \widehat{\bm{A}}^{pg}_{2,2} & \widehat{\bm{A}}^{pg}_{2,3} \\
{\widehat{\bm{A}}^{pg}_{1,3}\phantom{}}^T & {\widehat{\bm{A}}^{pg}_{2,3}\phantom{}}^T & \widehat{\bm{A}}^{pg}_{3,3}
\end{bmatrix}
+
\begin{bmatrix}
\bm{\widehat{R}}^{pg}_{1}(\bm{\mu}) + {\bm{\widehat{R}}^{pg}_{1}(\bm{\mu})}^T + \bm{\widehat{R}}^{pg}(\bm{\mu}) & \bm{\widehat{R}}^{pg}_{2}(\bm{\mu}) & \bm{\widehat{R}}^{pg}_{3}(\bm{\mu}) \\
{\bm{\widehat{R}}^{pg}_{2}(\bm{\mu})}^T & & \\
{\bm{\widehat{R}}^{pg}_{3}(\bm{\mu})}^T & &
\end{bmatrix}~.
\end{equation*}
The parameter--independent blocks have the following expressions:
\begin{equation}
\label{eq: ST--PGRB matrix blocks}
\begin{aligned}
\widehat{\bm{A}}^{pg}_{1,1} &= \left(\bm{A}^{st}_1\bm{\Pi}^u\right)^T\left(\bm{P}_{\bm{X}_u}^{st}\right)^{-1}\left(\bm{A}^{st}_1\bm{\Pi}^u\right) \qquad 
\widehat{\bm{A}}^{pg}_{4,4} &= \left(\bm{A}^{st}_4\bm{\Pi}^u\right)^T(\bm{P}_{\bm{X}_p}^{st})^{-1}\left(\bm{A}^{st}_4\bm{\Pi}^u\right) \\
\widehat{\bm{A}}^{pg}_{7,7} &= \left(\bm{A}^{st}_7\bm{\Pi}^u\right)^T\left(\bm{P}_{\bm{X}_\lambda}^{st}\right)^{-1}\left(\bm{A}^{st}_7\bm{\Pi}^u\right) \qquad 
\widehat{\bm{A}}^{pg}_{1,2} &= \left(\bm{A}^{st}_1\bm{\Pi}^u\right)^T\left(\bm{P}_{\bm{X}_u}^{st}\right)^{-1}\left(\bm{A}^{st}_2\bm{\Pi}^p\right) \\
\widehat{\bm{A}}^{pg}_{1,3} &= \left(\bm{A}^{st}_1\bm{\Pi}^u\right)^T\left(\bm{P}_{\bm{X}_u}^{st}\right)^{-1}\left(\bm{A}^{st}_3\bm{\Pi}^\lambda\right) \qquad
\widehat{\bm{A}}^{pg}_{2,2} &= \left(\bm{A}^{st}_2\bm{\Pi}^p\right)^T\left(\bm{P}_{\bm{X}_u}^{st}\right)^{-1}\left(\bm{A}^{st}_2\bm{\Pi}^p\right) \\
\widehat{\bm{A}}^{pg}_{3,3} &= \left(\bm{A}^{st}_3\bm{\Pi}^\lambda\right)^T\left(\bm{P}_{\bm{X}_u}^{st}\right)^{-1}\left(\bm{A}^{st}_3\bm{\Pi}^\lambda\right) \qquad
\widehat{\bm{A}}^{pg}_{2,3} &= \left(\bm{A}^{st}_2\bm{\Pi}^p\right)^T\left(\bm{P}_{\bm{X}_u}^{st}\right)^{-1}\left(\bm{A}^{st}_3\bm{\Pi}^\lambda\right)
\end{aligned}
\end{equation}
while the parameter--dependent ones write as:
\begin{equation}
\label{eq: ST--PGRB param matrix blocks}
\begin{alignedat}{3}
\widehat{\bm{R}}^{pg}_1 &=
\left(\bm{R}^{st}(\bm{\mu})\bm{\Pi}^u\right)^T \left(\bm{P}_{\bm{X}_u}^{st}\right)^{-1} \left(\bm{A}^{st}_1\bm{\Pi}^u\right) \qquad 
\widehat{\bm{R}}^{pg} &&=
\left(\bm{R}^{st}(\bm{\mu})\bm{\Pi}^u\right)^T \left(\bm{P}_{\bm{X}_u}^{st}\right)^{-1} \left(\bm{R}^{st}(\bm{\mu})\bm{\Pi}^u\right) \\
\widehat{\bm{R}}^{pg}_2 &=
\left(\bm{R}^{st}(\bm{\mu})\bm{\Pi}^u\right)^T \left(\bm{P}_{\bm{X}_u}^{st}\right)^{-1} \left(\bm{A}^{st}_2\bm{\Pi}^p\right) \qquad 
\widehat{\bm{R}}^{pg}_3 &&=
\left(\bm{R}^{st}(\bm{\mu})\bm{\Pi}^u\right)^T \left(\bm{P}_{\bm{X}_u}^{st}\right)^{-1} \left(\bm{A}^{st}_3\bm{\Pi}^\lambda\right)
\end{alignedat}
\end{equation}

Firstly, we focus on the parameter--independent blocks in Eq.\eqref{eq: ST--PGRB matrix blocks}, that can be assembled once and for all during the offline phase of the method. Let us define the following matrices:
\begin{equation}
\label{eq: half-reduced matrices}
\begin{alignedat}{4}
\overline{\bm{A}} &= \bm{A}\bm{\Phi}^u \ \in \mathbb{R}^{N_u^s \times n_u^s} \quad &
\overline{\bm{B}}^T &= \bm{B}^T\bm{\Phi}^p  \in \mathbb{R}^{N_u^s \times n_p^s}
\quad & 
\overline{\bm{B}} &= \bm{B}\bm{\Phi}^u  \in \mathbb{R}^{N_p^s \times n_u^s} 
\\
\overline{\bm{M}} &= \bm{M}\bm{\Phi}^u  \in \mathbb{R}^{N_u^s \times n_u^s}
\quad &
\overline{\bm{C}}^T &= \bm{C}^T \quad \ \in \mathbb{R}^{N_u^s \times N_\lambda}
\quad &
\overline{\bm{C}} &= \bm{C}\bm{\Phi}^u  \in \mathbb{R}^{N_\lambda \times n_u^s}
\end{alignedat}
\end{equation}
Also, let us define the matrix $\overline{\bm{MA}} := \overline{\bm{M}} + \frac23 \delta \overline{\bm{A}}$. Let us define the following indexes:
\begin{itemize}
	\item $\ell = \mathcal{F}_u(\ell_s, \ell_t)$, $m = \mathcal{F}_u(m_s, m_t)$ with $\ell_s, m_s \in \{1, \dots, n_u^s\}$, $\ell_t, m_t \in \{1, \dots, n_u^t\}$;
	\item $k = \mathcal{F}_p(k_s, k_t)$, $r = \mathcal{F}_p(r_s, r_t)$ with $k_s, r_s \in \{1, \dots, n_p^s\}$, $k_t, r_t \in \{1, \dots, n_p^t\}$;
	\item $i = \mathcal{F}_\lambda(i_s, i_t)$, $j = \mathcal{F}_\lambda(j_s, j_t)$ with $i_s, j_s \in \{1, \dots, N_\lambda\}$, $i_t, j_t \in \{1, \dots, n_\lambda^t\}$.
\end{itemize}
As in Eq.\eqref{eq: reduced A1}, the notations $\bm{v}_{i:}$, $\bm{v}_{:-j}, \bm{v}_{i:-j}$ denote the sub--vectors of a given vector $\bm{v}$ containing all the entries from the $i$--th to the last one, from the first one to the $j$--th from last and from the $i$--th one to the $j$--th from last, respectively. Finally, for a given matrix $\bm{Q} \in \mathbb{R}^{N_1 \times N_2}$, we use the notations $\bm{Q}_c$ or $\bm{Q}_{:, c}$ (with $c \in \{1, \dots, N_2\}$) to denote the $c$--th column of $\bm{Q}$ and $\bm{Q}_{r, :}$ (with $r \in \{1, \dots, N_1\}$) to denote the $r$--th row of $\bm{Q}$. Then, we have that:
\begin{fleqn}
	\begin{equation*}
	\label{eq: ST--PGRB A11}
	\begin{split}
	\left(\widehat{\bm{A}}^{pg}_{1,1}\right)_{\ell m} &= 
	\left(\left(\bm{A}^{st}_1\bm{\Pi}^u\right)^T\left(\bm{P}_{\bm{X}_u}^{st}\right)^{-1}\left(\bm{A}^{st}_1\bm{\Pi}^u\right)\right)_{\ell m} = 
	\left(\bm{A}^{st}_1\bm{\Pi}^u\right)^T_{\ell,:}\left(\bm{P}_{\bm{X}_u}^{st}\right)^{-1}\left(\bm{A}^{st}_1\bm{\Pi}^u\right)_{:,m} = \\
	& =
	\left(\bm{\overline{MA}}_{\ell_s}^T(\bm{P}_{\bm{X}_u})^{-1}\bm{\overline{MA}}_{m_s}\right) \delta_{\ell_t,m_t} \\
	& \quad + 
	\left(\bm{\overline{M}}_{\ell_s}^T(\bm{P}_{\bm{X}_u})^{-1}\bm{\overline{M}}_{m_s}\right)
	\left( \dfrac{16}{9}\left(\bm{\psi}^u_{\ell_t}\right)_{:-1}\left(\bm{\psi}^u_{m_t}\right)_{:-1}
	+ \dfrac{1}{9}\left(\bm{\psi}^u_{\ell_t}\right)_{:-2}\left(\bm{\psi}^u_{m_t}\right)_{:-2} \right. \\
	& \qquad \qquad \qquad \qquad \qquad \qquad \left. - \dfrac{4}{9}\left(\bm{\psi}^u_{\ell_t}\right)_{2:-1}\left(\bm{\psi}^u_{m_t}\right)_{:-2}
	- \dfrac{4}{9}\left(\bm{\psi}^u_{\ell_t}\right)_{:-2}\left(\bm{\psi}^u_{m_t}\right)_{2:-1} 
	\right) \\
	& \quad + \left(\bm{\overline{MA}}_{\ell_s}^T(\bm{P}_{\bm{X}_u})^{-1}\bm{\overline{M}}_{m_s}\right)
	\left(
	- \dfrac{4}{3}\left(\bm{\psi}^u_{\ell_t}\right)_{2:}\left(\bm{\psi}^u_{m_t}\right)_{:-1} + \dfrac{1}{3}\left(\bm{\psi}^u_{\ell_t}\right)_{3:}\left(\bm{\psi}^u_{m_t}\right)_{:-2}
	\right)\\
	& \quad + \left(\bm{\overline{M}}_{\ell_s}^T(\bm{P}_{\bm{X}_u})^{-1}\bm{\overline{MA}}_{m_s}\right)
	\left(
	- \dfrac{4}{3}\left(\bm{\psi}^u_{\ell_t}\right)_{:-1}\left(\bm{\psi}^u_{m_t}\right)_{2:} + \dfrac{1}{3}\left(\bm{\psi}^u_{\ell_t}\right)_{:-2}\left(\bm{\psi}^u_{m_t}\right)_{3:}
	\right)
	\end{split}
	\end{equation*}
	
	\begin{equation*}
	\label{eq: ST--PGRB A44}
	\begin{split}
	\left(\widehat{\bm{A}}^{pg}_{4,4}\right)_{\ell m} &= 
	\left(\left(\bm{A}^{st}_4\bm{\Pi}^u\right)^T\left(\bm{P}_{\bm{X}_p}^{st}\right)^{-1}\left(\bm{A}^{st}_4\bm{\Pi}^u\right)\right)_{\ell m} = 
	\left(\bm{A}^{st}_4\bm{\Pi}^u\right)^T_{\ell,:}\left(\bm{P}_{\bm{X}_p}^{st}\right)^{-1}\left(\bm{A}^{st}_4\bm{\Pi}^u\right)_{:,m} = \\
	& =
	\left(\bm{\overline{B}}_{\ell_s}^T(\bm{P}_{\bm{X}_p})^{-1}\bm{\overline{B}}_{m_s}\right) \delta_{\ell_t,m_t} 
	\end{split}
	\end{equation*}
	
	\begin{equation*}
	\label{eq: ST--PGRB A77}
	\begin{split}
	\left(\widehat{\bm{A}}^{pg}_{7,7}\right)_{\ell m} &= 
	\left(\left(\bm{A}^{st}_7\bm{\Pi}^u\right)^T\left(\bm{P}_{\bm{X}_\lambda}^{st}\right)^{-1}\left(\bm{A}^{st}_7\bm{\Pi}^u\right)\right)_{\ell m} = 
	\left(\bm{A}^{st}_7\bm{\Pi}^u\right)^T_{\ell,:}\left(\bm{P}_{\bm{X}_\lambda}^{st}\right)^{-1}\left(\bm{A}^{st}_7\bm{\Pi}^u\right)_{:,m} = \\
	& =
	\left(\bm{\overline{C}}_{\ell_s}^T\bm{\overline{C}}_{m_s}\right) \delta_{\ell_t,m_t} 
	\end{split}
	\end{equation*}
	
	\begin{equation*}
	\label{eq: ST--PGRB A12}
	\begin{split}
	\left(\widehat{\bm{A}}^{pg}_{1,2}\right)_{\ell k} &= 
	\left(\left(\bm{A}^{st}_1\bm{\Pi}^u\right)^T\left(\bm{P}_{\bm{X}_u}^{st}\right)^{-1}\left(\bm{A}^{st}_2\bm{\Pi}^p\right)\right)_{\ell k} = 
	\left(\bm{A}^{st}_1\bm{\Pi}^u\right)^T_{\ell,:}\left(\bm{P}_{\bm{X}_u}^{st}\right)^{-1}\left(\bm{A}^{st}_2\bm{\Pi}^p\right)_{:,k} = \\
	& =
	\dfrac23 \delta 
	\left(\bm{\overline{MA}}_{\ell_s}^T \left(\bm{P}_{\bm{X}_u}\right)^{-1} \bm{\overline{B}}^T_{k_s} \left(\bm{\psi}_{\ell_t}^u\right)^T\left(\bm{\psi}_{k_t}^p\right) - 
	\dfrac43 \bm{\overline{M}}_{\ell_s}^T \left(\bm{P}_{\bm{X}_u}\right)^{-1} \bm{\overline{B}}^T_{k_s} \left(\bm{\psi}_{\ell_t}^u\right)_{:-1} \left(\bm{\psi}_{k_t}^p\right)_{2:} \right. \\
	& \qquad \quad \ \left.
	+ \dfrac13 \bm{\overline{M}}_{\ell_s}^T \left(\bm{P}_{\bm{X}_u}\right)^{-1} \bm{\overline{B}}^T_{k_s} \left(\bm{\psi}_{\ell_t}^u\right)_{:-2} \left(\bm{\psi}_{k_t}^p\right)_{3:}
	\right)
	\end{split}
	\end{equation*}
	
	\begin{equation*}
	\label{eq: ST--PGRB A13}
	\begin{split}
	\left(\widehat{\bm{A}}^{pg}_{1,3}\right)_{\ell j} &= 
	\left(\left(\bm{A}^{st}_1\bm{\Pi}^u\right)^T\left(\bm{P}_{\bm{X}_u}^{st}\right)^{-1}\left(\bm{A}^{st}_3\bm{\Pi}^\lambda\right)\right)_{\ell j} = 
	\left(\bm{A}^{st}_1\bm{\Pi}^u\right)^T_{\ell,:}\left(\bm{P}_{\bm{X}_u}^{st}\right)^{-1}\left(\bm{A}^{st}_3\bm{\Pi}^\lambda\right)_{:,j} = \\
	& =
	\dfrac23 \delta 
	\left(\bm{\overline{MA}}_{\ell_s}^T \left(\bm{P}_{\bm{X}_u}\right)^{-1} \bm{\overline{C}}^T_{j_s} \left(\bm{\psi}_{\ell_t}^u\right)^T\left(\bm{\psi}_{j_t}^\lambda\right) - 
	\dfrac43 \bm{\overline{M}}_{\ell_s}^T \left(\bm{P}_{\bm{X}_u}\right)^{-1} \bm{\overline{C}}^T_{j_s} \left(\bm{\psi}_{\ell_t}^u\right)_{:-1} \left(\bm{\psi}_{j_t}^\lambda\right)_{2:} \right. \\
	& \qquad \quad \ \left.
	+ \dfrac13 \bm{\overline{M}}_{\ell_s}^T \left(\bm{P}_{\bm{X}_u}\right)^{-1} \bm{\overline{C}}^T_{j_s} \left(\bm{\psi}_{\ell_t}^u\right)_{:-2} \left(\bm{\psi}_{j_t}^\lambda\right)_{3:}
	\right)
	\end{split}
	\end{equation*}
	
	\begin{equation*}
	\label{eq: ST--PGRB A22}
	\begin{split}
	\left(\widehat{\bm{A}}^{pg}_{2,2}\right)_{k r} &= 
	\left(\left(\bm{A}^{st}_2\bm{\Pi}^p\right)^T\left(\bm{P}_{\bm{X}_u}^{st}\right)^{-1}\left(\bm{A}^{st}_2\bm{\Pi}^p\right)\right)_{k r} = 
	\left(\bm{A}^{st}_2\bm{\Pi}^p\right)^T_{k,:}\left(\bm{P}_{\bm{X}_u}^{st}\right)^{-1}\left(\bm{A}^{st}_2\bm{\Pi}^p\right)_{:,r} = \\
	& =
	\dfrac49 \delta^2 \left(\left(\bm{\overline{B}}_{k_s}^T\right)^T(\bm{P}_{\bm{X}_p})^{-1}\bm{\overline{B}}_{r_s}^T\right) \delta_{k_t,r_t} 
	\end{split}
	\end{equation*}
	
	\begin{equation*}
	\label{eq: ST--PGRB A33}
	\begin{split}
	\left(\widehat{\bm{A}}^{pg}_{3,3}\right)_{j i} &= 
	\left(\left(\bm{A}^{st}_3\bm{\Pi}^\lambda\right)^T\left(\bm{P}_{\bm{X}_u}^{st}\right)^{-1}\left(\bm{A}^{st}_3\bm{\Pi}^\lambda\right)\right)_{j i} = 
	\left(\bm{A}^{st}_3\bm{\Pi}^\lambda\right)^T_{j,:}\left(\bm{P}_{\bm{X}_u}^{st}\right)^{-1}\left(\bm{A}^{st}_3\bm{\Pi}^\lambda\right)_{:,i} = \\
	& =
	\dfrac49 \delta^2 \left(\left(\bm{\overline{C}}_{j_s}^T\right)^T(\bm{P}_{\bm{X}_p})^{-1}\bm{\overline{C}}_{i_s}^T\right) \delta_{j_t,i_t} 
	\end{split}
	\end{equation*}
	
	\begin{equation*}
	\label{eq: ST--PGRB A23}
	\begin{split}
	\left(\widehat{\bm{A}}^{pg}_{2,3}\right)_{k j} &= 
	\left(\left(\bm{A}^{st}_2\bm{\Pi}^p\right)^T\left(\bm{P}_{\bm{X}_u}^{st}\right)^{-1}\left(\bm{A}^{st}_3\bm{\Pi}^\lambda\right)\right)_{k j} = 
	\left(\bm{A}^{st}_2\bm{\Pi}^p\right)^T_{k,:}\left(\bm{P}_{\bm{X}_u}^{st}\right)^{-1}\left(\bm{A}^{st}_3\bm{\Pi}^\lambda\right)_{:,j} = \\
	& =
	\dfrac49 \delta^2 \left(\left(\bm{\overline{B}}_{k_s}^T\right)^T(\bm{P}_{\bm{X}_p})^{-1}\bm{\overline{C}}_{j_s}^T\right)\left(\left(\bm{\psi}_{k_t}^p\right)^T \bm{\psi}_{j_t}^{\lambda}\right)
	\end{split}
	\end{equation*}	
\end{fleqn}

Let us now focus on the parameter--dependent blocks of the left--hand side matrix. Let us define the matrices
\begin{equation}
\overline{\bm{R}}^q = \bm{R}^q \bm{\Phi}^u \in \mathbb{R}^{N_u^s \times n_u^s} \qquad \text{with} \quad q \in \{1, \dots, N_c\}~.
\end{equation}
For $q,q' \in \{1, \dots N_c\}$ and considering the indexes $\ell, m$ as in Eq.\eqref{eq: reduced A1}, the following space--time--reduced matrices can be assembled offline:
\begin{fleqn}
	\begin{align*}
	\widehat{\bm{R}}_{1,q}^{pg} \in \mathbb{R}^{n_u^{st} \times n_u^{st}} \ & : \quad \left(\widehat{\bm{R}}_{1,q}^{pg}\right)_{\ell m} &&= \left(\left(\overline{\bm{R}}^q_{\ell_s}\right)^T \left(\bm{P}_{\bm{X}_u}\right)^{-1} \overline{\bm{MA}}_{m_s} \right) \delta_{\ell_t,m_t}   \\
	& && \quad + 
	\left(\left(\overline{\bm{R}}^q_{\ell_s}\right)^T \left(\bm{P}_{\bm{X}_u}\right)^{-1} \overline{\bm{M}}_{m_s}\right)
	\left( -\frac43 (\bm{\psi}^u_{\ell_t})_{2:}^T (\bm{\psi}^u_{m_t})_{:-1} \right. \\
	& && \qquad \qquad \qquad \qquad \qquad \qquad \qquad 
	\left. + \frac13 (\bm{\psi}^u_{\ell_t})_{3:}^T (\bm{\psi}^u_{m_t})_{:-2}\right) 
	\\
	\widehat{\bm{R}}_{1^*,(q,q')}^{pg} \in \mathbb{R}^{n_u^{st} \times n_u^{st}} \ & : \quad \left(\widehat{\bm{R}}_{1^*,(q,q')}^{pg}\right)_{\ell m} &&= \left(\left(\overline{\bm{R}}^{q}_{\ell_s}\right)^T \left(\bm{P}_{\bm{X}_u}\right)^{-1} \overline{\bm{R}}^{q'}_{m_s} \right) \delta_{\ell_t,m_t}   
	\\
	\widehat{\bm{R}}_{2,q}^{pg} \in \mathbb{R}^{n_u^{st} \times n_u^{st}} \ & : \quad \left(\widehat{\bm{R}}_{2,q}^{pg}\right)_{\ell m} &&= \left(\left(\overline{\bm{R}}^q_{\ell_s}\right)^T \left(\bm{P}_{\bm{X}_u}\right)^{-1} \overline{\bm{B}}^T_{m_s} \right) \left(\left(\bm{\psi}^u_{\ell_t}\right)^T \left(\bm{\psi}^p_{m_t}\right)\right)   
	\\
	\widehat{\bm{R}}_{3,q}^{pg} \in \mathbb{R}^{n_u^{st} \times n_u^{st}} \ & : \quad \left(\widehat{\bm{R}}_{3,q}^{pg}\right)_{\ell m} &&= \left(\left(\overline{\bm{R}}^q_{\ell_s}\right)^T \left(\bm{P}_{\bm{X}_u}\right)^{-1} \overline{\bm{C}}^T_{m_s} \right) \left(\left(\bm{\psi}^u_{\ell_t}\right)^T \left(\bm{\psi}^\lambda_{m_t}\right)\right)   \\
	\end{align*}
\end{fleqn}
Exploiting the affine parametrization of the reaction term, the left--hand side parameter--dependent blocks can be then assembled online as follows:
\begingroup
\allowdisplaybreaks
\begin{equation}
\begin{aligned}
\label{eq: ST--PGRB param blocks}
\widehat{\bm{R}}^{pg}_1(\bm{\mu}) &= \frac23 \delta \ \sum_{q=1}^{N_c} \rho_c^q(\bm{\mu}) \widehat{\bm{R}}^{pg}_{1,q}
\qquad 
&&\widehat{\bm{R}}^{pg}_{1^*}(\bm{\mu}) = \frac49 \delta^2 \ \sum_{q=1}^{N_c} \sum_{q'=1}^{N_c} \rho_c^q(\bm{\mu}) \rho_c^{q'}(\bm{\mu}) \widehat{\bm{R}}^{pg}_{1^*,(q,q')}
\\
\widehat{\bm{R}}^{pg}_2(\bm{\mu}) &= \frac23 \delta \ \sum_{q=1}^{N_c} \rho_c^q(\bm{\mu}) \widehat{\bm{R}}^{pg}_{2,q}
\qquad
&&\widehat{\bm{R}}^{pg}_3(\bm{\mu}) = \frac23 \delta \ \sum_{q=1}^{N_c} \rho_c^q(\bm{\mu}) \widehat{\bm{R}}^{pg}_{3,q}
\end{aligned}
\end{equation}
\endgroup
Finally, let us consider the right--hand side vector $\widehat{\bm{F}}^{pg}$. Exploiting the block structure of the FOM matrix $\bm{A}^{st}$ and of the FOM right--hand side vector $\bm{F}^{st}$ (see Eq.\eqref{eq: monolitic_FOM_system}), we have that
\begin{equation}
\label{apx9}
\begin{split}
\widehat{\bm{F}}^{pg} &= 
\left(\bm{A}^{st} \bm{\Pi}\right)^T \left(\bm{P}_{\bm{X}}^{st}\right)^{-1} \bm{F}^{st} \\
&=
\begingroup % keep the change local
\setlength\arraycolsep{1pt}
\begin{bmatrix} 
((\bm{A}^{st}_1 + \bm{R}^{st}(\bm{\mu}))\bm{\Pi}^u)^T & (\bm{A}^{st}_4\bm{\Pi}^u)^T & (\bm{A}^{st}_7\bm{\Pi}^u)^T \\
(\bm{A}^{st}_2\bm{\Pi}^p)^T & & \\
(\bm{A}^{st}_3\bm{\Pi}^{\lambda})^T & &
\end{bmatrix}
\endgroup
\left(
\begingroup % keep the change local
\setlength\arraycolsep{1pt}
\begin{bmatrix} 
\bm{P}_{\bm{X}_u}^{st}&  &  \\
& \bm{P}_{\bm{X}_p}^{st} & \\
& & \bm{P}_{\bm{X}_\lambda}^{st}
\end{bmatrix}
\endgroup
\right)^{-1}
\begin{bmatrix}
\\ \\ \bm{F}^{st}_3(\bm{\mu})
\end{bmatrix} \\
&=
\begin{bmatrix}
(\bm{A}^{st}_7\bm{\Pi}^u)^T\bm{F}^{st}_3(\bm{\mu}) \\ \\ \\
\end{bmatrix}~,
\end{split}
\end{equation}
since $\bm{P}_{\bm{X}_\lambda}^{st} = \bm{I}_{n_\lambda^{st}}$. Based on the expression of $\bm{F}_3^{st}(\bm{\mu})$ in  Eq.\eqref{eq: monolitic_FOM_blocks_vector}, the only non--zero block in $\widehat{\bm{F}}^{pg}$ --- denoted as $\widehat{\bm{F}}^{pg}_{7,3}(\bm{\mu})$ --- writes as:
\begin{equation}
\widehat{\bm{F}}^{pg}_{7,3}(\bm{\mu}) = \left[\left(\widehat{\bm{F}}^{pg, 1}_{7,3}(\bm{\mu})\right)^T, \cdots,  \left(\widehat{\bm{F}}^{pg, N_D}_{7,3}(\bm{\mu})\right)^T\right]^T \in \mathbb{R}^{n_\lambda^{st}}~.
\end{equation}
For $k \in \{1, \dots, N_D\}$ and considering $j^k = \mathcal{F}_{\lambda_k}(j_s^k, j_t^k)$ with $j_s^k \in \{1, \dots, N_\lambda^k\}$. $j_t^k \in \{1, \dots, n_{\lambda_k}^t\}$, $\widehat{\bm{F}}^{pg, k}_{7,3}(\bm{\mu}) \in \mathbb{R}^{n_{\lambda_k}^{st}}$ is such that
\begin{fleqn}
	\begin{equation}
	\label{eq: ST--PGRB F73}
	\begin{split}
	\left(\widehat{\bm{F}}^{pg, k}_{7,3}(\bm{\mu})\right)_{j^k} &= \left(\left(\left(\bm{A}^{st}_7\right)^k \bm{\Pi}^u\right)^T\right)_{{j^k},:}
	\begin{bmatrix}
	\bm{\tilde{g}}^s_k g^t_k(t^1;\bm{\mu}) \\ \vdots \\ \bm{\tilde{g}}^s_k g^t_k(t^{N^t};\bm{\mu})
	\end{bmatrix} =
	\begin{bmatrix}
	\bm{C}^k_{j^k_s}(\psi^u_{j^k_t})_1 \\ 
	\vdots \\
	\bm{C}^k_{j^k_s}(\psi^u_{j^k_t})_{N^t}
	\end{bmatrix}^T
	\begin{bmatrix}
	\bm{\tilde{g}}^s_k g^t_k(t^1;\bm{\mu}) \\ \vdots \\ \bm{\tilde{g}}^s_k g^t_k(t^{N^t};\bm{\mu})
	\end{bmatrix}
	\\&=
	\left(\left(\bm{C}_{j^k_s}^k\right)^T \bm{\tilde{g}}^s_k\right) \left(\left(\bm{\psi}^u_{j^k_t}\right)^T \bm{g}_k^t(\bm{\mu})\right)~.
	\end{split}
	\end{equation}
\end{fleqn}
Here $\left(\bm{A}^{st}_7\right)^k$ denotes the $k$--th block of $\bm{A}^{st}_7$ along its first dimension and the vector $\bm{g}^t_k(\bm{\mu}) \in \mathbb{R}^{N^t}$ is such that $\left(\bm{g}^t_k(\bm{\mu})\right)_i = g^t_k(t^i; \bm{\mu})$, for $i \in \{1,\cdots,N^t\}$.

\end{document}